\documentclass[12pt]{amsart}
\usepackage{}

\usepackage{amsmath}
\usepackage{amsfonts}
\usepackage{amssymb}
\usepackage[all]{xy}           %xypic macro for latex2.09

\usepackage{bbding}
\usepackage{txfonts}
\usepackage{amscd}

\usepackage[shortlabels]{enumitem}
\usepackage{ifpdf}
\ifpdf
\usepackage[colorlinks,final,%backref=page,
hyperindex]{hyperref}
\else
\usepackage[colorlinks,final,%backref=page,
hyperindex]{hyperref}
\fi
\usepackage{tikz}
\usepackage[active]{srcltx}

%======================================================================
    %was    1, 1.5 for double sp
%======================================================================
%%standard setting
%\topmargin -0.3truein \textheight 8.4truein
%\oddsidemargin 0.2truein
%\evensidemargin 0.2truein \textwidth 440pt
%======================================================================
%%little larger standard setting: good setting
\topmargin -1.5cm \textheight 24.6cm \oddsidemargin 0cm \evensidemargin -0cm
\textwidth 16.6cm
%========================================================================%%wide
%%lower setting for 1920x1080
%%\topmargin -.9cm \textheight 21cm \oddsidemargin 0cm \evensidemargin -0cm
%\textwidth 16.3cm
%%%%%%%%%%%%%%

\makeatletter

\numberwithin{equation}{section}

\newtheorem{thm}{Theorem}[section]
\newtheorem{lem}[thm]{Lemma}
\newtheorem{cor}[thm]{Corollary}
\newtheorem{pro}[thm]{Proposition}
\newtheorem{ex}[thm]{Example}

\newtheorem{defi}[thm]{Definition}

\setlength{\baselineskip}{1.8\baselineskip}

\newcommand {\emptycomment}[1]{}

\newcommand{\g}{\mathfrak g}
\newcommand{\gl}{\mathfrak {gl}}
\newcommand{\kl}{\mathfrak l}
\newcommand{\kr}{\mathfrak r}

\newcommand{\fu}{\mathbf u}
\newcommand{\fl}{\mathbf l}
\newcommand{\fr}{\mathbf r}

\newcommand{\ad}{\mathrm{ad}}
\newcommand{\Img}{\mathrm{Im}}

\newcommand{\q}{\mathrm{q}}

\def\id{\mathop {\fam0 id}\nolimits}

\begin{document}
\title[Quasi-triangular Novikov bialgebras and related bialgebra structures]
{Quasi-triangular Novikov bialgebras and related bialgebra structures}

\author{Zhanpeng Cui}
\address{School of Mathematics and Statistics, Henan University, Kaifeng 475004, China}
\email{czp15824833068@163.com}

\author{Bo Hou}
\address{School of Mathematics and Statistics, Henan University, Kaifeng 475004, China}
\email{bohou1981@163.com}
		
\vspace{-5mm}
	
\begin{abstract}
We introduce the notion of quasi-triangular Novikov bialgebras, which constructed from
solutions of the Novikov Yang-Baxter equation whose symmetric parts are invariant.
Triangular Novikov bialgebras and factorizable Novikov bialgebras are important
subclasses of quasi-triangular Novikov bialgebras. A factorizable Novikov bialgebra
induces a factorization of the underlying Novikov algebra and the double
of any Novikov bialgebra naturally admits a factorizable Novikov bialgebra structure.
Moreover, we introduce the notion of quadratic Rota-Baxter Novikov algebras and show that
there is an one-to-one correspondence between factorizable Novikov bialgebras and
quadratic Rota-Baxter Novikov algebras of nonzero weights.
Finally, we obtain that the Lie bialgebra induced by a Novikov bialgebra and a quadratic
right Novikov algebra is quasi-triangular (resp. triangular, factorizable) if the
Novikov bialgebra is quasi-triangular (resp. triangular, factorizable), and under
certain conditions, the Novikov bialgebra induced by a differential infinitesimal bialgebra
is quasi-triangular (resp. triangular, factorizable) if the differential infinitesimal
bialgebra is quasi-triangular (resp. triangular, factorizable).
\end{abstract}
	
\keywords{quasi-triangular Novikov bialgebra, factorizable Novikov bialgebras,
Novikov Yang-Baxter equation, quadratic Rota-Baxter Novikov algebra,
quasi-triangular differential infinitesimal bialgebra,
quasi-triangular Lie bialgebra.}
\subjclass[2010]{16T10, 16T25, 17B62, 17A60.}
	
\maketitle
\vspace{-4mm}
\tableofcontents

%%%%%%%%%%%%%%%%%%%%%%%%%%%%%%%%%%%%%%%%%%%%%%%%%%%%%%%%%%%%%%%%%%%%%%%%%%%%%%%%%
%    section  1 Introduction
%%%%%%%%%%%%%%%%%%%%%%%%%%%%%%%%%%%%%%%%%%%%%%%%%%%%%%%%%%%%%%%%%%%%%%%%%%%%%%%%%%%%%%
\section{Introduction}\label{sec:intr}
Novikov algebras were independently introduced in the study of Hamiltonian operators in
the formal calculus of variations by Gelfand and Dorfman \cite{GD1,GD2} as well
as in the connection with linear Poisson brackets of hydrodynamic type by Balinskii
and Novikov \cite{BN}. It was shown in \cite{Xu} that Novikov algebras correspond to
a class of Lie conformal algebras which give an axiomatic description of the singular
part of operator product expansion of chiral fields in conformal field theory \cite{K1}.
Novikov algebras are a special class of pre-Lie algebras (also called left-symmetric algebras),
which are tightly connected with many fields in mathematics and physics such as affine
manifolds and affine structures on Lie groups \cite{Ko}, convex homogeneous cones
\cite{Vin}, vertex algebras \cite{BLP, BK} and so on. Recently, multi-Novikov algebras
arose from regularity structure of stochastic PDEs \cite{BHZ,BD}.

A bialgebra structure on a given algebraic structure is obtained as a coalgebra
structure together which gives the same algebraic structure on the dual space with
a set of compatibility conditions between the multiplications and comultiplications.
One of the most famous examples of bialgebras is the Lie bialgebra \cite{Dri}, and
more importantly, there have been a lot of bialgebra theories for other algebra
structures that essentially follow the approach of Lie bialgebras such as
antisymmetric infinitesimal bialgebras \cite{Agu,Bai}, left-symmetric bialgebras
\cite{Bai1}, Jordan bialgebras \cite{Zhe}, and mock-Lie bialgebras \cite{BCHM}.
These bialgebras have a common property, that is, they have equivalent characterizations
in terms of Manin triples which correspond to nondegenerate bilinear forms on the algebra
structures satisfying certain conditions. In addition, it is noteworthy that one can study
the bialgebra theories in the sense that they give a distributive law in \cite{LMW}.

A recent study \cite{HBG} introduced the notion of a Novikov bialgebra to construct
infinite-dimensional Lie bialgebras via the Novikov bialgebra affinization.
The concept of a Novikov Yang-Baxter equation was proposed to construct
Novikov bialgebras. In this paper, we further study some special Novikov bialgebras
such as quasi-triangular Novikov bialgebras, triangular Novikov bialgebras and
factorizable Novikov bialgebras. Factorizable bialgebra is an important subclass of
quasi-triangular bialgebra, which give rise to a natural factorization of the
underlying algebras. Recently, the factorizable theory has been studied for
Lie bialgebras \cite{LS}, antisymmetric infinitesimal bialgebras \cite{SW},
pre-Lie bialgebras \cite{WBLS}, Leibniz bialgebras \cite{BLST},
mock-Lie bialgebras \cite{CH} and so on. Here, we study the quasi-triangular
Novikov bialgebras and factorizable Novikov bialgebras, and give an one-to-one
correspondence between factorizable Novikov bialgebras and quadratic Rota-Baxter
Novikov algebras of nonzero weights. Moreover, we consider the relationship between
quasi-triangular Novikov bialgebras and solutions of Novikov Yang-Baxter equation,
as well as the relationship between quasi-triangular Lie bialgebras and quasi-triangular
Novikov bialgebras. We get the first main theorem:

\smallskip\noindent
{\bf Theorem I} (Theorem \ref{thm:indu-sLiebia})
{\it Let $(A, \diamond, \delta)$ be a Novikov bialgebra and $(B, \circ, \omega)$ be a quadratic
right Novikov algebra, and $(A\otimes B, [-,-], \tilde{\Delta})$ be the induced
Lie bialgebra by $(A, \diamond, \delta)$ and $(B, \circ, \omega)$. Then,
\begin{enumerate}\itemsep=0pt
\item $(A\otimes B, [-,-], \tilde{\Delta})$ is quasi-triangular if
     $(A, \diamond, \delta)$ is quasi-triangular;
\item $(A\otimes B, [-,-], \tilde{\Delta})$ is triangular if
     $(A, \diamond, \delta)$ is triangular;
\item $(A\otimes B, [-,-], \tilde{\Delta})$ is factorizable if
     $(A, \diamond, \delta)$ is factorizable.
\end{enumerate}}

Most recently, the relations between differential infinitesimal bialgebras and
Novikov bialgebras were established in \cite{HBG1}. It was shown that under some
additional conditions, every (commutative and cocommutative) differential infinitesimal
bialgebra derived a Novikov bialgebra. In this paper, we further study this construction
for quasi-triangular (resp. triangular, factorizable) Novikov bialgebras.

\smallskip\noindent
{\bf Theorem II} (Theorem \ref{thm:indu-spbia})
{\it Let $(A, \cdot, \partial, \theta)$ be an admissible differential algebra. Suppose
$(A, \cdot, \Delta, \partial, \theta)$ is a differential infinitesimal bialgebra
and $(A, \diamond_{\q}, \delta_{\q})$ is the Novikov bialgebra induced by
$(A, \cdot, \Delta, \partial, \theta)$. Then, if $\q=-\frac{1}{2}$, or $\theta$ is a
derivation on $(A, \cdot)$, we have
\begin{enumerate}\itemsep=0pt
\item $(A, \diamond_{\q}, \delta_{\q})$ is quasi-triangular if
     $(A, \cdot, \Delta, \partial, \theta)$ is quasi-triangular;
\item $(A, \diamond_{\q}, \delta_{\q})$ is triangular if
     $(A, \cdot, \Delta, \partial, \theta)$ is triangular;
\item $(A, \diamond_{\q}, \delta_{\q})$ is factorizable if
     $(A, \cdot, \Delta, \partial, \theta)$ is factorizable.
\end{enumerate}}

This paper is organized as follows. In Section \ref{sec:quasi-tri}, we show that a
solution of the Novikov Yang-Baxter equation in a Novikov algebra whose symmetric
part is invariant gives rise to a quasi-triangular Novikov bialgebra.
In Section \ref{sec:fact}, we introduce the notions of factorizable Novikov
bialgebras and show that the double of a Novikov bialgebra naturally has a
factorizable Novikov bialgebra structure. We also introduce the notion of
quadratic Rota-Baxter Novikov algebras and show that there is an one-to-one
correspondence between factorizable Novikov bialgebras and quadratic Rota-Baxter
Novikov algebras of nonzero weights. In Section \ref{sec:factdiff},
for some special admissible differential algebras, we show that a solution $r$ of
the NYBE in this admissible differential algebra such that $\mathfrak{s}(r)$ is
$\fu_{A}$-invariant is also a solution of the NYBE in the induced Novikov algebra
such that the symmetric part is invariant. Therefore, under certain conditions, we
show that the quasi-triangular Novikov bialgebra induced by a quasi-triangular
(resp. triangular, factorizable) differential infinitesimal bialgebra is also
quasi-triangular (resp. triangular, factorizable). In \cite{HBG}, Hong, Bai and Guo
provide a method for constructing Lie bialgebras using Novikov bialgebras and
quadratic right Novikov algebras. In Section \ref{sec:factlie}, we consider the
Lie bialgebras constructed from quasi-triangular Novikov bialgebras by using this method,
and show that the Lie bialgebra induced by a Novikov bialgebra and a quadratic
right Novikov algebra is quasi-triangular (resp. triangular, factorizable) if the
Novikov bialgebra is quasi-triangular (resp. triangular, factorizable).

Throughout this paper, we fix $\Bbbk$ a field and characteristic zero.
All the vector spaces and algebras are over $\Bbbk$, and all tensor products are also
taking over $\Bbbk$. For any vector space $V$, we denote $V^{\ast}$ the dual space of $V$.
We denote the identity map by $\id$.

\bigskip
%%%%%%%%%%%%%%%%%%%%%%%%%%%%%%%%%%%%%%%%%%%%%%%%%%%%%%%%%%%%%%%%%%%%%%%%%%%%%%%%%
%    section  2   quasi-triangular
%%%%%%%%%%%%%%%%%%%%%%%%%%%%%%%%%%%%%%%%%%%%%%%%%%%%%%%%%%%%%%%%%%%%%%%%%%%%%%%%%%%%%%
\section{Quasi-triangular Novikov bialgebras}\label{sec:quasi-tri}
In this section, we show that a solution of the Novikov Yang-Baxter equation
in a Novikov algebra whose symmetric part is invariant gives rise to a
quasi-triangular Novikov bialgebra.
We first recall some known facts about Novikov algebras and Novikov bialgebras.
Recall that a {\it pre-Lie algebra} $(A, \diamond)$ is a vector space with a binary
operation $\diamond$ satisfying the following condition:
$$
(a_{1}\diamond a_{2})\diamond a_{3}-a_{1}\diamond (a_{2}\diamond a_{3})
=(a_{2}\diamond a_{1})\diamond a_{3}-a_{2}\diamond (a_{1}\diamond a_{3}),
$$
for any $a_{1}, a_{2}, a_{3}\in A$. A {\it (left) Novikov algebra} $(A, \diamond)$ is a
pre-Lie algebra satisfying the following condition:
$$
(a_{1}\diamond a_{2})\diamond a_{3}=(a_{1}\diamond a_{3})\diamond a_{2},
$$
for any $a_{1}, a_{2}, a_{3}\in A$. Obviously, a commutative associative algebra
is a Novikov algebra. Let $(A, \cdot)$ be a commutative associative algebra,
and $\partial: A\rightarrow A$ be a derivation on $(A, \cdot)$, i.e.,
$\partial(a_{1}\cdot a_{2})=\partial(a_{1})\cdot a_{2}+a_{1}\cdot\partial(a_{2})$
for any $a_{1}, a_{2}\in A$. Then, $(A, \cdot, \partial)$ is said to be a
differential algebra. The classical example of a Novikov algebra was given by
S. Gelfand \cite{GD1} from differential algebras. Given a differential algebra
$(A, \cdot, \partial)$, the binary operation
$$
a_{1}\diamond_{\partial}a_{2}:=a_{1}\cdot\partial(a_{2}),
$$
for any $a_{1}, a_{2}\in A$, defines a Novikov algebra $(A, \diamond_{\partial})$,
which is called the Novikov algebra induced by a differential algebra $(A, \cdot, \partial)$.
It was shown that each Novikov algebra can be embedded into a differential
algebra \cite{BCZ}.

Let $(A, \diamond)$ and $(B, \diamond')$ be two Novikov algebras. A linear map $f:
A\rightarrow B$ is called a {\it Novikov algebra homomorphism} if for any $a_{1}, a_{2}\in A$,
$f(a_{1}\diamond a_{2})=f(a_{1})\diamond'f(a_{2})$. We define a product $\ast$ on
direct sum $A\oplus B$ of vector spaces by
$$
(a_{1}, b_{1})\ast(a_{2}, b_{2})=(a_{1}\diamond a_{2},\ \ b_{1}\diamond' b_{2}),
$$
for any $a_{1}, a_{2}\in A$ and $b_{1}, b_{2}\in B$. Then $(A\oplus B, \ast)$ is also
a Novikov algebra, which is called the {\it direct sum Novikov algebra}
of $(A, \diamond)$ and $(B, \diamond')$.

\begin{defi}\label{def:rep}
A {\rm representation} of a Novikov algebra $(A, \diamond)$ is a triple
$(V, \kl, \kr)$, where $V$ is a vector space and $\kl, \kr:
A\rightarrow\gl(V)$ are linear maps satisfying
\begin{align*}
&\kl(a_{1}\diamond a_{2}-a_{2}\diamond a_{1})=\kl(a_{1})\kl(a_{2})-\kl(a_{2})\kl(a_{1}),
\qquad\qquad\quad\;\, \kl(a_{1}\diamond a_{2})=\kr(a_{2})\kl(a_{1}),\\
&\kl(a_{1})\kr(a_{2})-\kr(a_{2})\kl(a_{1})=\kr(a_{1}\diamond a_{2})-\kr(a_{2})\kr(a_{1}),
\qquad\qquad\kr(a_{1})\kr(a_{2})=\kr(a_{2})\kr(a_{1}),
\end{align*}
for any $a_{1}, a_{2}\in A$.
\end{defi}

Let $(A, \diamond)$ be a Novikov algebra. Define linear maps
$\fl_{A}, \fr_{A}: A\rightarrow\gl(A)$ by $\fl_{A}(a_{1})(a_{2})=a_{1}\diamond a_{2}$
and $\fr_{A}(a_{1})(a_{2})=a_{2}\diamond a_{1}$ for any $a_{1}, a_{2}\in A$.
Then, $(A, \fl_{A}, \fr_{A})$ is a representation of $(A, \diamond)$, which is called
the {\it adjoint representation} of $(A, \diamond)$.
Let $V$ be a vector space. Denote the standard pairing between the dual space
$V^{\ast}$ and $V$ by
$$
\langle-,-\rangle:\quad V^{\ast}\otimes V\rightarrow \Bbbk, \qquad\quad
\langle \xi,\; v \rangle:=\xi(v),
$$
for any $\xi\in V^{\ast}$ and $v\in V$. Let $V$, $W$ be two vector spaces. For a linear
map $\varphi: V\rightarrow W$, the transpose map $\varphi^{\ast}: W^{\ast}\rightarrow
V^{\ast}$ is defined by
$$
\langle \varphi^{\ast}(\xi),\; v \rangle:=\langle\xi,\; \varphi(v)\rangle,
$$
for any $v\in V$ and $\xi\in W^{\ast}$. Let $(A, \cdot)$ be a Novikov algebra
and $V$ be a vector space. For a linear map $\psi: A\rightarrow\gl(V)$, the linear map
$\psi^{\ast}: A\rightarrow\gl(V^{\ast})$ is defined by
$$
\langle\psi^{\ast}(a)(\xi),\; v\rangle:=-\langle\xi,\; \psi(a)(v)\rangle,
$$
for any $a\in A$, $v\in V$, $\xi\in V^{\ast}$. That is, $\psi^{\ast}(a)=\psi(a)^{\ast}$
for all $a\in A$.

It is well-known that $(V^{\ast}, -\mu^{\ast})$ is a representation of the
commutative associative algebra $(A, \cdot)$ if $(V, \mu)$ is a representation
of $(A, \cdot)$, and $(V^{\ast}, \kl^{\ast}+\kr^{\ast}, -\kr^{\ast})$ is a representation
of Novikov algebra $(A, \diamond)$ if $(V, \kl, \kr)$ is a representation of $(A, \diamond)$.
By direct calculation, we have

\begin{pro}\label{pro:dual}
Let $(A, \diamond)$ be a Novikov algebra and $(V, \kl, \kr)$ be a representation
of it. Then $(V^{\ast}, \kl^{\ast}+\kr^{\ast}, -\kr^{\ast})$ is also a
representation of $(A, \diamond)$. We call it the {\rm dual representation of
$(V, \kl, \kr)$}.
\end{pro}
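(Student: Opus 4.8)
The plan is to verify directly the four defining identities of Definition \ref{def:rep} for the triple $(V^{\ast}, \kl^{\ast}+\kr^{\ast}, -\kr^{\ast})$, writing $L:=\kl^{\ast}+\kr^{\ast}$ and $R:=-\kr^{\ast}$ for brevity. The whole computation rests on a single bookkeeping device coming from the definition of the dualized map: $\langle \psi^{\ast}(a)\xi,\, v\rangle=-\langle \xi,\, \psi(a)v\rangle$ for $\psi\in\{\kl,\kr\}$, so that in a composite of two such maps the two sign changes cancel while the order reverses, $\langle \psi^{\ast}(a)\phi^{\ast}(b)\xi,\, v\rangle=\langle \xi,\, \phi(b)\psi(a)v\rangle$. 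Consequently each identity to be proved on $V^{\ast}$, after pairing with an arbitrary $v\in V$ and stripping off the arbitrary $\xi\in V^{\ast}$, is equivalent to an operator identity on $V$ involving only $\kl$, $\kr$ and $\diamond$, which I would then check from the four axioms satisfied by $(V,\kl,\kr)$. Throughout I write $\cdot^{\mathsf T}$ for the ordinary transpose (no sign), so that $\psi^{\ast}(a)=-\psi(a)^{\mathsf T}$.

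First I would dispose of the two easy cases. The last axiom $R(a_1)R(a_2)=R(a_2)R(a_1)$ dualizes to exactly the commutativity $\kr(a_1)\kr(a_2)=\kr(a_2)\kr(a_1)$ (the order reversal being harmless here), i.e.\ the fourth axiom for $(V,\kl,\kr)$. The mixed product identity $L(a_1\diamond a_2)=R(a_2)L(a_1)$ reduces on $V$ to $\kl(a_1\diamond a_2)+\kr(a_1\diamond a_2)=\kl(a_1)\kr(a_2)+\kr(a_1)\kr(a_2)$, which I would obtain by adding the second axiom $\kl(a_1\diamond a_2)=\kr(a_2)\kl(a_1)$ to the third axiom solved for $\kr(a_1\diamond a_2)$, cancelling the common term $\kr(a_2)\kl(a_1)$, and using commutativity of the $\kr$'s at the end.

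It is convenient to abbreviate $\rho(a):=\kl(a)+\kr(a)$, since then the left action dualizes to $L(a)=-\rho(a)^{\mathsf T}$ while $R(a)=\kr(a)^{\mathsf T}$. In this notation the third axiom for $(V^{\ast},L,R)$ collapses to the commutator identity $[\rho(a_1),\kr(a_2)]=\kr(a_1\diamond a_2)-\kr(a_1)\kr(a_2)$ on $V$; expanding $\rho=\kl+\kr$ and using the fourth axiom to kill the $[\kr(a_1),\kr(a_2)]$ term, this is precisely the third axiom again (followed by one more use of commutativity).

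The main obstacle is the first axiom, $L(a_1\diamond a_2-a_2\diamond a_1)=L(a_1)L(a_2)-L(a_2)L(a_1)$, which dualizes to the statement that $\rho=\kl+\kr$ is a representation of the sub-adjacent Lie algebra, namely $\rho(a_1\diamond a_2-a_2\diamond a_1)=[\rho(a_1),\rho(a_2)]$. Expanding the commutator produces four blocks $[\kl(a_1),\kl(a_2)]$, $[\kl(a_1),\kr(a_2)]$, $[\kr(a_1),\kl(a_2)]$, $[\kr(a_1),\kr(a_2)]$: the first is handled by the first axiom, the last vanishes by the fourth, and the two mixed blocks require invoking the third axiom \emph{twice}, once in each of the two orderings of $a_1,a_2$, after which the $\kr$-quadratic terms cancel by commutativity and the surviving $\kr$-linear terms assemble into $\kr(a_1\diamond a_2-a_2\diamond a_1)$. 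Keeping track of the signs coming both from the order reversal of transposition and from the minus sign built into the definition of $\psi^{\ast}$ is where care is needed; once the pre-dual identity is reduced to this combination of axioms the verification is mechanical.
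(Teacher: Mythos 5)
Your proof is correct: the sign convention $\psi^{\ast}(a)=-\psi(a)^{\mathsf T}$ is the one the paper uses, your reductions of the four axioms for $(V^{\ast},\kl^{\ast}+\kr^{\ast},-\kr^{\ast})$ to operator identities on $V$ are all accurate, and each identity does follow from the four axioms for $(V,\kl,\kr)$ exactly as you combine them. The paper itself offers no details (it only says the result follows ``by direct calculation''), and your argument is precisely that direct verification, so it takes essentially the same approach.
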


In particular, for any Novikov algebra $(A, \diamond)$, $(A^{\ast},
\fl_{A}^{\ast}+\fr_{A}^{\ast}, -\fr_{A}^{\ast})$ is a representation of
$(A, \diamond)$, which is the dual representation of the adjoint representation
$(A, \fl_{A}, \fr_{A})$, called the {\it coadjoint representation} of $(A, \diamond)$.
In \cite{HBG}, Hong, Bai and Guo have studied the bialgebra theory of Novikov algebras.
We now recall the matched pairs, Manin triples and bialgebras of Novikov algebras.
Let $(A, \diamond)$ and $(B, \diamond')$ be two Novikov algebras, $(B, \kl_{A}, \kr_{A})$
and $(A, \kl_{B}, \kr_{B})$ be representations of $(A, \diamond)$ and $(B, \diamond')$
respectively. If $A\oplus B$ under the product
\begin{align}
(a_{1}, b_{1})\ast(a_{2}, b_{2})
=\big(a_{1}\diamond a_{2}+\kl_{B}(b_{1})(a_{2})+\kr_{B}(b_{2})(a_{1}),\ \
b_{1}\diamond'b_{2}+\kl_{A}(a_{1})(b_{2})+\kr_{A}(a_{2})(b_{1})\big),  \label{product}
\end{align}
for all $a_{1}, a_{2}\in A$ and $b_{1}, b_{2}\in B$, is a Novikov algebra, then
$(A, B, \kl_{A}, \kr_{A}, \kl_{B}, \kr_{B})$ is called a {\rm matched pair of
Novikov algebras} $(A, \diamond)$ and $(B, \diamond')$. The Novikov algebra
$(A\oplus B,\; \ast)$ is called a {\rm crossed product} of $(A, \diamond)$ and
$(B, \diamond')$, denoted by $A\bowtie B$.
Next, we give notions of some special bilinear form Novikov algebras.
Let $\mathfrak{B}(-, -)$ be a bilinear form on a Novikov algebra $(A, \diamond)$.
Recall that
\begin{itemize}
\item[-] $\mathfrak{B}(-, -)$ is called {\it nondegenerate} if $\mathfrak{B}(a_{1},
     a_{2})=0$ for any $a_{2}\in A$, then $a_{1}=0$;
\item[-] $\mathfrak{B}(-, -)$ is called {\it symmetric} if $\mathfrak{B}(a_{1}, a_{2})= \mathfrak{B}(a_{2}, a_{1})$, for any $a_{1}, a_{2}\in A$;
\item[-] $\mathfrak{B}(-, -)$ is called {\it invariant} if
$$
\mathfrak{B}(a_{1}\diamond a_{2},\; a_{3})+\mathfrak{B}(a_{2},\; a_{1}\diamond a_{3}
+a_{3}\diamond a_{1})=0,
$$
for any $a_{1}, a_{2}, a_{3}\in A$.
\end{itemize}
A {\it quadratic Novikov algebra} $(A, \diamond, \mathfrak{B})$ is a Novikov algebra
$(A, \diamond)$ equipped with a nondegenerate symmetric invariant bilinear form
$\mathfrak{B}(-,-)$. A {\it (standard) Manin triple of Novikov algebras} is a triple
of Novikov algebras $(\mathcal{A}=A\oplus A^{\ast},\; (A, \diamond),\; (A^{\ast}, \circ))$
for which
\begin{enumerate}
\item as a vector space, $\mathcal{A}$ is the direct sum of $A$ and $A^{\ast}$;
\item $(A, \diamond)$ and $(A^{\ast}, \circ)$ are Novikov subalgebras of $\mathcal{A}$;
\item the bilinear form on $\mathcal{A}=A\oplus A^{\ast}$ defined by
$$
\mathcal{B}\big((a_{1}, \xi_{1}),\;  (a_{2}, \xi_{2})\big)
:=\langle\xi_{1}, a_{2}\rangle+\langle\xi_{2}, a_{1}\rangle,
$$
for any $a_{1}, a_{2}\in A$ and $\xi_{1}, \xi_{2}\in A^{\ast}$, is invariant.
\end{enumerate}
A {\it Novikov coalgebra} $(A, \delta)$ is a vector space
$A$ with a linear map $\delta: A\rightarrow A\otimes A$ such that
\begin{align*}
(\tau\otimes\id)(\id\otimes\delta)\tau\delta
&=(\delta\otimes\id)\delta,\\
(\id\otimes\delta)\delta-(\tau\otimes\id)(\id\otimes\delta)\delta
&=(\delta\otimes\id)\delta-(\tau\otimes\id)(\delta\otimes\id)\delta.
\end{align*}
It is easy to see that, $(A, \delta)$ is a Novikov coalgebra if and only if
$(A^{\ast}, \diamond_{\delta}=\delta^{\ast})$ is a Novikov algebra. Recall \cite{HBG} that a
{\it Novikov bialgebra} is a tripe $(A, \diamond, \delta)$ where $(A, \diamond)$
is a Novikov algebra and $(A, \delta)$ is a Novikov coalgebra such that
\begin{align*}
&\qquad\qquad \delta(a_{1}\diamond a_{2})=(\fr_{A}(a_{2})\otimes\id)(\delta(a_{1}))
+(\id\otimes(\fl_{A}+\fr_{A})(a_{1}))(\delta(a_{2})+\tau(\delta(a_{2}))),\\
&\qquad\qquad\qquad(\fl_{A}+\fr_{A})(a_{1})\otimes\id)(\delta(a_{2}))
-(\id\otimes(\fl_{A}+\fr_{A})(a_{1}))(\tau(\delta(a_{2})))\\[-1mm]
&\qquad\qquad\quad=((\fl_{A}+\fr_{A})(a_{2})\otimes\id)(\delta(a_{1}))
-(\id\otimes(\fl_{A}+\fr_{A})(a_{2}))(\tau(\delta(a_{1}))),\\
&(\id\otimes\fr_{A}(a_{1})-\fr_{A}(a_{1})\otimes\id)(\delta(a_{2})+(\tau(\delta(a_{2}))))
=(\id\otimes\fr_{A}(a_{2})-\fr_{A}(a_{2})\otimes\id)(\delta(a_{1})+(\tau(\delta(a_{1})))),
\end{align*}
for all $a_{1}, a_{2}\in A$.

\begin{ex}\label{ex:2bialg}
Let vector space $A=\Bbbk\{e_{1}, e_{2}\}$. Define $e_{1}\diamond e_{1}=e_{2}$.
Then $(A, \diamond)$ is a 2-dimensional Novikov algebra. Define linear map $\delta: A
\rightarrow A\otimes A$ by $\delta(e_{1})=e_{2}\otimes e_{2}$ and $\delta(e_{2})=0$,
then we get a $2$-dimensional Novikov bialgebra $(A, \diamond, \delta)$.
\end{ex}

A Novikov bialgebra can be characterized by a Manin triple and a matched pair
of Novikov algebras.

\begin{thm}[\cite{HBG}]\label{thm:Novequi}
Let $(A, \diamond)$ be a Novikov algebra and $(A, \delta)$ a Novikov coalgebra.
Then the following conditions are equivalent.
\begin{enumerate}
\item There is a Manin triple $(A\oplus A^{\ast},\; (A, \diamond),\;
     (A^{\ast}, \diamond_{\delta}))$ of Novikov algebras;
\item $(A, A^{\ast}, \fl_{A}^{\ast}+\fr_{A}^{\ast}, -\fr_{A}^{\ast},
     \fl_{A^{\ast}}^{\ast}+\fr_{A^{\ast}}^{\ast}, -\fr_{A^{\ast}}^{\ast})$ is a matched
     pair of Novikov algebras;
\item $(A, \diamond, \delta)$ is a Novikov bialgebra.
\end{enumerate}
\end{thm}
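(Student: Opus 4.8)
The plan is to prove the equivalence of the three conditions cyclically, following the standard strategy used for Lie bialgebras and analogous bialgebra theories. The key conceptual point is that all three conditions encode the same compatibility data between the Novikov algebra structure $\diamond$ on $A$ and the Novikov coalgebra structure $\delta$ on $A$ (equivalently, the algebra structure $\diamond_\delta=\delta^\ast$ on $A^\ast$), merely expressed in three different languages: as invariance of a canonical bilinear form, as the matched-pair axioms, and as the explicit cocycle-type compatibility conditions in the definition of a Novikov bialgebra. My strategy would be to establish $(1)\Leftrightarrow(2)$ by purely structural reasoning about crossed products and invariant forms, and then $(2)\Leftrightarrow(3)$ by translating the matched-pair axioms into the coalgebraic compatibility conditions via dualization.

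First I would prove $(1)\Leftrightarrow(2)$. The underlying vector space of the Manin triple is always $\mathcal A=A\oplus A^\ast$, and the nondegenerate symmetric bilinear form $\mathcal B$ is fixed by the pairing formula in the definition. The content is that the product on $\mathcal A$ making $(A,\diamond)$ and $(A^\ast,\diamond_\delta)$ into subalgebras, while rendering $\mathcal B$ invariant, is \emph{forced} to be the crossed-product multiplication \eqref{product} with the specific representations $\kl_A=\fl_A^\ast+\fr_A^\ast$, $\kr_A=-\fr_A^\ast$ acting on $A^\ast$ (the coadjoint representation of $(A,\diamond)$, supplied by Proposition \ref{pro:dual}), and the symmetric counterparts for the $A^\ast$-action on $A$. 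Concretely, one writes $(a_1,\xi_1)\ast(a_2,\xi_2)$ with unknown cross-terms, imposes the invariance identity $\mathcal B(x\ast y,z)+\mathcal B(y,x\ast z+z\ast x)=0$ for all $x,y,z\in\mathcal A$, and reads off the cross-terms by testing against basis elements of $A$ and $A^\ast$; this pins down the actions precisely as the (co)adjoint ones. Conversely, given the matched pair, the crossed product $A\bowtie A^\ast$ is a Novikov algebra by hypothesis, contains both as subalgebras, and the form $\mathcal B$ is invariant by the same computation run backwards, yielding the Manin triple.

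For $(2)\Leftrightarrow(3)$, the heart of the argument is that the matched-pair axioms of Definition (the four representation identities applied to the two representations $(A^\ast,\fl_A^\ast+\fr_A^\ast,-\fr_A^\ast)$ and $(A,\fl_{A^\ast}^\ast+\fr_{A^\ast}^\ast,-\fr_{A^\ast}^\ast)$, together with the compatibility conditions that make \eqref{product} associative in the Novikov sense) are \emph{equivalent}, after taking transposes and using $\diamond_\delta=\delta^\ast$, to the three displayed compatibility conditions in the definition of a Novikov bialgebra. The mechanism is the adjunction $\langle\psi^\ast(a)\xi,v\rangle=-\langle\xi,\psi(a)v\rangle$: each matched-pair axiom involving $\fl_A,\fr_A$ acting on $A^\ast$ and $\fl_{A^\ast},\fr_{A^\ast}$ (which is literally $\delta$) acting on $A$ dualizes into a tensor identity in $A\otimes A$ relating $\delta(a_1\diamond a_2)$ to $\fl_A,\fr_A$ acting on $\delta(a_1),\delta(a_2)$ and their flips $\tau(\delta(a_i))$. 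I would carry this out by evaluating each side against an arbitrary $a_3\in A$ (or $\xi\in A^\ast$) to reduce tensor identities to scalar identities, at which point the matched-pair axioms and the bialgebra axioms become term-by-term the same equations.

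The main obstacle I anticipate is the bookkeeping in $(2)\Leftrightarrow(3)$: the Novikov representation identities are genuinely asymmetric in $\kl$ and $\kr$ (unlike the Lie case), and the dual representation mixes $\fl_A^\ast+\fr_A^\ast$ with $-\fr_A^\ast$, so each dualization produces several terms that must be regrouped, with the flip $\tau$ appearing exactly where the symmetrization $\delta(a_2)+\tau(\delta(a_2))$ occurs in the bialgebra conditions. Keeping track of signs from the transpose convention and correctly matching the three (rather than the naively expected four) compatibility conditions — one of the matched-pair axioms being automatically subsumed or equivalent to another under the symmetry of the setup — is where the care is needed. Since the statement is quoted from \cite{HBG}, I would organize the proof to make each correspondence transparent rather than merely verify it, and I expect the computation, though lengthy, to be routine once the dualization dictionary between representation identities and coalgebra identities is fixed.
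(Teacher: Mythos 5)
This theorem is stated in the paper as a quotation from \cite{HBG} with no proof supplied, so there is no in-paper argument to compare against; the relevant benchmark is the proof in that reference. Your strategy --- establishing $(1)\Leftrightarrow(2)$ by showing that invariance of $\mathcal{B}$ on $A\oplus A^{\ast}$ forces the cross-actions to be the coadjoint-type representations $(\fl_{A}^{\ast}+\fr_{A}^{\ast}, -\fr_{A}^{\ast})$ and $(\fl_{A^{\ast}}^{\ast}+\fr_{A^{\ast}}^{\ast}, -\fr_{A^{\ast}}^{\ast})$, and $(2)\Leftrightarrow(3)$ by dualizing the crossed-product (matched-pair) conditions through the pairing into the three compatibility identities defining a Novikov bialgebra --- is exactly the standard route taken in \cite{HBG} and in the analogous Lie, left-symmetric, and antisymmetric infinitesimal bialgebra theories, and apart from the lengthy but routine bookkeeping you already flag, it contains no gap.
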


Recall that a Novikov bialgebra $(A, \diamond, \delta)$
is called {\it coboundary} if there exists a $r\in A\otimes A$ such that
\begin{align}
\delta(a)=\delta_{r}(a)=-\big(\fl_{A}(a)\otimes\id
+\id\otimes(\fl_{A}+\fr_{A})(a)\big)(r),      \label{cobnov}
\end{align}
for any $a\in A$. In this case, we denote this coboundary Novikov bialgebra by
$(A, \diamond, \delta_{r})$.

\begin{pro}[\cite{HBG}]\label{pro:cob-nov}
Let $(A, \diamond)$ be a Novikov algebra, $r\in A\otimes A$, and $\delta_{r}:
A\rightarrow A\otimes A$ be a linear map defined by Eq. \eqref{cobnov}. Then $(A, \diamond,
\delta_{r})$ is a Novikov bialgebra if and only if for any $a, a_{1}, a_{2}\in A$,
\begin{align}
&\Big(\id\otimes(\fl_{A}(a_{2}\diamond a_{1})+\fl_{A}(a_{1})\fl_{A}(a_{2}))
+(\fl_{A}+\fr_{A})(a_{1})\otimes(\fl_{A}+\fr_{A})(a_{2})\Big)(r+\tau(r))=0, \label{bia1}\\
&\quad\Big((\fl_{A}+\fr_{A})(a_{1})\otimes(\fl_{A}+\fr_{A})(a_{2})
-(\fl_{A}+\fr_{A})(a_{2})\otimes(\fl_{A}+\fr_{A})(a_{1})\Big)(r+\tau(r))=0, \label{bia2}\\
&\Big((\fl_{A}+\fr_{A})(a_{1})\otimes\fr_{A}(a_{2})
-(\fl_{A}+\fr_{A})(a_{2})\otimes\fr_{A}(a_{1})
+\fr_{A}(a_{1})\otimes\fl_{A}(a_{2})-\fr_{A}(a_{2})\otimes\fl_{A}(a_{1}) \label{bia3}\\[-2mm]
&\qquad+\id\otimes\big(\fl_{A}(a_{1})\fl_{A}(a_{2})
-\fl_{A}(a_{2})\fl_{A}(a_{1})\big)-\big(\fl_{A}(a_{1})\fl_{A}(a_{2})
-\fl_{A}(a_{2})\fl_{A}(a_{1})\big)\otimes\id\Big)(r+\tau(r))=0,     \nonumber\\
&\Big(\fl_{A}(a)\otimes\id\otimes\id-\id\otimes\fl_{A}(a)\otimes\id\Big)
\Big(\tau(r)_{12}\diamond r_{13}+r_{12}\diamond r_{23}+r_{13}\diamond r_{23}
+r_{23}\diamond r_{13}\Big)  \label{bia4}\\[-2mm]
&\quad+\Big(\id\otimes\id\otimes(\fl_{A}+\fr_{A})(a)\Big)
\Big(r_{23}\diamond r_{13}-r_{13}\diamond r_{23}-(\id\otimes\id\otimes\id-\tau\otimes\id)
(r_{13}\diamond r_{12}+r_{12}\diamond r_{23})\Big)      \nonumber\\[-1mm]
&\qquad+\big((\id\otimes\fl_{A}(a))(r+\tau(r))\big)_{12}\diamond r_{23}
-\big((\fl_{A}(a_{1})\otimes\id)(r)\big)_{13}\diamond(r+\tau(r))_{12}=0,    \nonumber\\
&(\id\otimes\id\otimes\id-\id\otimes\tau)(\id\otimes\id\otimes(\fl_{A}+\fr_{A})(a))
(r_{13}\diamond\tau(r)_{23}-r_{12}\diamond r_{23}-r_{23}\diamond r_{12}
-r_{13}\diamond r_{12})=0. \label{bia5}
\end{align}
\end{pro}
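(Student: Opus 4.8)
The plan is to substitute the coboundary formula \eqref{cobnov} for $\delta_r$ directly into the defining axioms of a Novikov bialgebra and to reorganize the resulting expressions in $r$ into the five stated equations. Recall that $(A, \diamond, \delta_r)$ is a Novikov bialgebra precisely when two families of conditions hold: the two Novikov coalgebra axioms for $\delta_r$, and the three compatibility conditions between $\diamond$ and $\delta$ recorded just before Example \ref{ex:2bialg}. I would split the verification along this line, since the compatibility conditions are linear in $\delta$ (hence linear in $r$) while the coalgebra axioms are quadratic in $\delta$ (hence quadratic in $r$), and the two families are exactly what produce \eqref{bia1}--\eqref{bia3} and \eqref{bia4}--\eqref{bia5} respectively.

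First I would write $r = \sum_i u_i \otimes v_i$, so that $\delta_r(a) = -\sum_i (a \diamond u_i) \otimes v_i - \sum_i u_i \otimes \big(a \diamond v_i + v_i \diamond a\big)$, and substitute this into each of the three compatibility conditions. Because these conditions are linear in $\delta$, the outcome is linear in $r$; the essential point is that, after using the Novikov identity $(a_1 \diamond a_2)\diamond a_3 = (a_1 \diamond a_3)\diamond a_2$ together with the left-symmetry relation to combine the terms coming from $\delta_r$ with those coming from the twisted $\tau(\delta_r)$, each condition can be rearranged so that $r$ enters only through the symmetric combination $r + \tau(r)$. Collecting the surviving terms, the first compatibility condition (the one carrying $\delta(a_1 \diamond a_2)$) reduces to \eqref{bia1}, while the remaining two conditions, each antisymmetric under exchanging $a_1$ and $a_2$, reduce to \eqref{bia2} and \eqref{bia3}.

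Second I would compute the iterated comultiplications $(\delta_r \otimes \id)\delta_r$, $(\id \otimes \delta_r)\delta_r$ and their $\tau$-twisted variants $(\tau \otimes \id)(\id \otimes \delta_r)\tau\delta_r$ and $(\tau \otimes \id)(\delta_r \otimes \id)\delta_r$ occurring in the two coalgebra axioms. Expanding $\delta_r$ twice produces, for each axiom, a single free variable $a$ together with quadratic-in-$r$ terms, which after regrouping take the form of the slotwise products $r_{12} \diamond r_{23}$, $r_{13} \diamond r_{23}$, $r_{13} \diamond r_{12}$, and so on appearing in \eqref{bia4} and \eqref{bia5}; the flips built into the axioms are what generate the operators $(\id \otimes \id \otimes \id - \tau \otimes \id)$ and $(\id \otimes \id \otimes \id - \id \otimes \tau)$. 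The goal is to show that the two coalgebra axioms reduce, respectively, to \eqref{bia4} and \eqref{bia5}.

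The main obstacle is the bookkeeping in this quadratic step: there are many terms of the shape $r_{ij} \diamond r_{kl}$, and one must track both the tensor slot into which each $\diamond$ falls and the way $\tau$ permutes those slots under the compositions above. Making the terms coalesce into the compact form of \eqref{bia4} and \eqref{bia5} requires repeated, careful use of the Novikov identity and left-symmetry to rewrite nested products such as $(a \diamond u_i)\diamond u_j$ and to match the two sides of each axiom; once these reductions are in place the matching is mechanical. By contrast, the linear step is routine once the symmetric part $r + \tau(r)$ has been isolated.
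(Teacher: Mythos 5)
Your decomposition is correct and is essentially the intended argument: the three compatibility conditions are linear in $\delta_{r}$ and, after substituting Eq.~\eqref{cobnov}, involve $r$ only through $r+\tau(r)$, giving \eqref{bia1}--\eqref{bia3}, while the two Novikov coalgebra axioms are quadratic in $r$ and expand into the slotwise products that form \eqref{bia4} and \eqref{bia5}. Note that this paper does not itself prove Proposition~\ref{pro:cob-nov} (it is quoted from \cite{HBG}), but your split matches exactly the computation the paper does carry out in proving Proposition~\ref{pro:inv-bia}, where the combination $(\id\otimes\delta_{r})\delta_{r}-(\tau\otimes\id)(\id\otimes\delta_{r})\delta_{r}-(\delta_{r}\otimes\id)\delta_{r}+(\tau\otimes\id)(\delta_{r}\otimes\id)\delta_{r}$ arising from the second coalgebra axiom is precisely the expression identified with \eqref{bia4}.
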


\begin{defi}\label{def:dNPbi}
Let $(A, \diamond)$ be a Novikov algebra, $r=\sum_{i}x_{i}\otimes y_{i}\in A\otimes A$.
$$
\mathbf{N}_{r}:=r_{13}\diamond r_{23}+r_{12}\diamond r_{23}+r_{23}\diamond r_{12}
+r_{13}\diamond r_{12}=0
$$
is called the {\rm Novikov Yang-Baxter equation (or NYBE)} in $(A, \diamond)$,
where $r_{13}\diamond r_{23}=\sum_{i,j}x_{i}\otimes x_{j}\otimes(y_{i}\diamond y_{j})$,
$r_{12}\diamond r_{23}=\sum_{i,j}x_{i}\otimes(y_{i}\diamond x_{j})\otimes y_{j}$,
$r_{23}\diamond r_{12}=\sum_{i,j}x_{j}\otimes(x_{i}\diamond y_{j})\otimes y_{i}$ and
$r_{13}\diamond r_{12}=\sum_{i,j}(x_{i}\diamond x_{j})\otimes y_{j}\otimes y_{i}$.
\end{defi}

Thus, we have

\begin{pro}[\cite{HBG}]\label{pro:tri-bialg}
Let $(A, \diamond)$ be a Novikov algebra, $r\in A\otimes A$, and $\delta_{r}:
A\rightarrow A\otimes A$ be a linear map defined by Eq. \eqref{cobnov}.
If $r$ is a solution the NYBE in $(A, \diamond)$ and it is skew-symmetric,
i.e., $r=-\tau(r)$, then $(A, \diamond, \delta_{r})$ is a Novikov bialgebra,
which is called a {\rm triangular Novikov bialgebra} associated with $r$.
\end{pro}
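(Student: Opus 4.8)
The plan is to verify the five coboundary conditions \eqref{bia1}--\eqref{bia5} of Proposition \ref{pro:cob-nov}; by that proposition their validity is exactly equivalent to $(A,\diamond,\delta_r)$ being a Novikov bialgebra, so nothing else is needed. The skew-symmetry hypothesis $r=-\tau(r)$ says that the symmetric part $r+\tau(r)$ vanishes. Since each of \eqref{bia1}, \eqref{bia2} and \eqref{bia3} is an operator applied to $r+\tau(r)$, all three hold automatically. The same remark kills the last two summands of \eqref{bia4}, namely $\big((\id\otimes\fl_A(a))(r+\tau(r))\big)_{12}\diamond r_{23}$ and $\big((\fl_A(a)\otimes\id)(r)\big)_{13}\diamond(r+\tau(r))_{12}$, each of which carries the factor $r+\tau(r)=0$. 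So only the NYBE-type identities \eqref{bia5} and \eqref{bia4} (minus those two summands) require real work.

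I would dispose of \eqref{bia5} first, as it is immediate. Using $\tau(r)_{23}=-r_{23}$, the bracket $r_{13}\diamond\tau(r)_{23}-r_{12}\diamond r_{23}-r_{23}\diamond r_{12}-r_{13}\diamond r_{12}$ becomes $-(r_{13}\diamond r_{23}+r_{12}\diamond r_{23}+r_{23}\diamond r_{12}+r_{13}\diamond r_{12})=-\mathbf{N}_r=0$, since $r$ solves the NYBE. Applying the operator $(\id\otimes\id\otimes\id-\id\otimes\tau)(\id\otimes\id\otimes(\fl_A+\fr_A)(a))$ to zero still yields zero, so \eqref{bia5} holds.

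The remaining identity \eqref{bia4} is where I expect the real difficulty to lie. After deleting the two vanishing summands, one must show that the two operator images $\big(\fl_A(a)\otimes\id\otimes\id-\id\otimes\fl_A(a)\otimes\id\big)\big(\tau(r)_{12}\diamond r_{13}+r_{12}\diamond r_{23}+r_{13}\diamond r_{23}+r_{23}\diamond r_{13}\big)$ and $\big(\id\otimes\id\otimes(\fl_A+\fr_A)(a)\big)\big(r_{23}\diamond r_{13}-r_{13}\diamond r_{23}-(\id-\tau\otimes\id)(r_{13}\diamond r_{12}+r_{12}\diamond r_{23})\big)$ sum to zero. The obstacle is that these brackets involve products not literally appearing in $\mathbf{N}_r$, such as $r_{23}\diamond r_{13}$ and $(\tau\otimes\id)(r_{13}\diamond r_{12})$. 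My strategy is to expand every summand in the explicit index form of Definition \ref{def:dNPbi}, rewrite $\tau(r)_{ij}=-r_{ij}$, and apply the exact position-swap identities $r_{12}\diamond r_{13}=(\id\otimes\tau)(r_{13}\diamond r_{12})$ and $r_{23}\diamond r_{13}=(\tau\otimes\id)(r_{13}\diamond r_{23})$, so as to recast the entire content as a permuted copy of $\mathbf{N}_r$ sitting inside the antisymmetrizers. The antisymmetrization in positions $1,2$ furnished by $\fl_A(a)\otimes\id\otimes\id-\id\otimes\fl_A(a)\otimes\id$ together with the projector $\id-\tau\otimes\id$ are precisely the devices that should force the non-NYBE leftovers to cancel in pairs; where a product still cannot be matched directly I would invoke the Novikov relations to move the $\diamond$ into the required slot. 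Once the bracketed expressions are exhibited as operators applied to $\mathbf{N}_r=0$, equation \eqref{bia4} follows and the verification via Proposition \ref{pro:cob-nov} is complete.
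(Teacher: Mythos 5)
Your handling of the easy parts is correct and matches what the paper's argument gives for free: each of \eqref{bia1}--\eqref{bia3} is an operator applied to $r+\tau(r)=0$; the last two summands of \eqref{bia4} each carry a factor $r+\tau(r)$; and in \eqref{bia5} skew-symmetry turns the bracket into $-\mathbf{N}_{r}=0$. Note also that the paper does not reprove this cited result directly: it is the special case $\mathfrak{s}(r)=0$ of Proposition \ref{pro:inv-bia}, and the substance of the proof is the page-long expansion carried out there for \eqref{bia4}.

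That is exactly where your proposal has a genuine gap: the core of \eqref{bia4} is never verified, only a strategy is described, and the strategy's central claim --- that permutation bookkeeping recasts everything as ``a permuted copy of $\mathbf{N}_{r}$ sitting inside the antisymmetrizers'' --- is false as stated. To see this, push your own reductions to the end. Skew-symmetry gives, besides your two swap identities, the relations
\begin{align*}
r_{12}\diamond r_{23}=-(\id\otimes\tau)(r_{13}\diamond r_{23}),\qquad
r_{23}\diamond r_{12}=-(\tau\otimes\id)(r_{13}\diamond r_{12}),
\end{align*}
so every product occurring in \eqref{bia4} is a signed permutation image of $A_{1}:=r_{13}\diamond r_{23}$, and $\mathbf{N}_{r}=0$ is one linear relation among these images. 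Using that relation in all possible ways, the second bracket of \eqref{bia4} reduces not to zero but to
\begin{align*}
\big(\id\otimes\id\otimes(\fl_{A}+\fr_{A})(a)\big)\big(r_{13}\diamond r_{12}+r_{23}\diamond r_{12}\big),
\end{align*}
which is not an operator applied to $\mathbf{N}_{r}$ and does not vanish by itself; it must cancel against what survives of the first bracket, i.e.\ against terms such as $(a\diamond x_{j})\otimes x_{i}\otimes(y_{i}\diamond y_{j})$ produced by $\fl_{A}(a)\otimes\id\otimes\id$ acting in a different tensor slot. That cross-cancellation is precisely where the pre-Lie identity and right-commutativity of $\diamond$ (together with renewed uses of $\mathbf{N}_{r}=0$) must be deployed term by term --- it is the content of the long display in the paper's proof of Proposition \ref{pro:inv-bia} computing $(\id\otimes\delta_{r})\delta_{r}(a)-(\tau\otimes\id)(\id\otimes\delta_{r})\delta_{r}(a)-(\delta_{r}\otimes\id)\delta_{r}(a)+(\tau\otimes\id)(\delta_{r}\otimes\id)\delta_{r}(a)$ in terms of $\mathbf{N}_{r}$. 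Your fallback sentence, ``where a product still cannot be matched directly I would invoke the Novikov relations to move the $\diamond$ into the required slot,'' is therefore not a minor patch but the entire proposition; until that computation is actually performed, the proof is incomplete.
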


Indeed, the skew-symmetry of solution $r$ can be weakened to the following invariance
of $\mathfrak{s}(r)$.

\begin{defi}\label{def:invar}
Let $(A, \diamond)$ be a Novikov algebra. An element $r\in A\otimes A$ is called
{\rm invariant} if for any $a\in A$,
$$
\Phi(a)(r):=\big(\fl_{A}(a)\otimes\id+\id\otimes(\fl_{A}+\fr_{A})(a)\big)(r)=0.
$$
\end{defi}

Let $(A, \diamond)$ be a Novikov algebra.
By direct calculation, for any $a_{1}, a_{2}\in A$, we have
\begin{align*}
&\;\Big(\id\otimes(\fl_{A}(a_{2}\diamond a_{1})+\fl_{A}(a_{1})\fl_{A}(a_{2}))
+(\fl_{A}+\fr_{A})(a_{1})\otimes(\fl_{A}+\fr_{A})(a_{2})\Big)(r+\tau(r))\\
=&\;\big(\id\otimes\fl_{A}(a_{2}\diamond a_{1})+\id\otimes\fl_{A}(a_{1})\fl_{A}(a_{2})\big)
(r+\tau(r))+\tau\big((\fl_{A}+\fr_{A})(a_{2})\otimes(\fl_{A}+\fr_{A})(a_{1})(r+\tau(r))\big)\\
%=&\big(\id\otimes\fl_{A}(a_{2}\diamond a_{1})+\id\otimes\fl_{A}(a_{1})\fl_{A}(a_{2}))\big)
%(r+\tau(r))-\tau\big((\fl_{A}+\fr_{A})(a_{2})\otimes\id\big)\big(\fl_{A}(a_{1})
%\otimes\id\big)(r+\tau(r))\\[-1mm]
%&\quad+\tau\big(\fl_{A}+\fr_{A})(a_{2})\otimes\id\big)\big(\fl_{A}(a_{1})\otimes\id
%+\id\otimes(\fl_{A}+\fr_{A})(a_{1})\big)(r+\tau(r))\\
=&\;\tau\Big(((\fl_{A}+\fr_{A})(a_{2})\otimes\id)\big((\fl_{A}(a_{1})\otimes\id
+\id\otimes(\fl_{A}+\fr_{A})(a_{1}))(r+\tau(r))\Big)\\[-1mm]
&\quad+\Big(\id\otimes\fl_{A}(a_{2}\diamond a_{1})+\id\otimes\fl_{A}(a_{1})\fl_{A}(a_{2})
-\id\otimes\fl_{A}(a_{2})\fl_{A}(a_{1})
-\id\otimes\fl_{A}(a_{1}\diamond a_{2})\Big)(r+\tau(r))\\
=&\;\tau\big(((\fl_{A}+\fr_{A})(a_{2})\otimes\id)(\Phi(a_{1})(r+\tau(r))\big).
\end{align*}
That is, Eq. \eqref{bia1} is equivalent to
\begin{align}
&((\fl_{A}+\fr_{A})(a_{2})\otimes\id)(\Phi(a_{1})(r+\tau(r))=0. \label{bia11}
\end{align}
Similarly, we also have Eq. \eqref{bia2} is equivalent to
\begin{align}
&\big((\fl_{A}+\fr_{A})(a_{1})\otimes\id\big)\big(\Phi(a_{2})(r+\tau(r))\big)
-\big((\fl_{A}+\fr_{A})(a_{2})\otimes\id\big)\big(\Phi(a_{1})(r+\tau(r))\big)=0, \label{bia12}
\end{align}
and Eq. \eqref{bia3} equivalent to
\begin{align}
&\big(\fr_{A}(a_{1})\otimes\id-\id\otimes\fr_{A}(a_{1})\big)\big(\Phi(a_{2})
(r+\tau(r))\big) \label{bia13}\\[-1mm]
&\qquad\qquad\qquad\qquad+\big(\id\otimes\fr_{A}(a_{2})-\fr_{A}(a_{2})\otimes\id\big)
\big(\Phi(a_{2})(r+\tau(r))\big)=0.  \nonumber
\end{align}

Let $(A, \diamond)$ be a Novikov algebra. For any $r=\sum_{i} x_{i}\otimes y_{i}
\in A\otimes A$, we define map $r^{\sharp}: A^{\ast}\rightarrow A$ by
$\langle\eta,\; r^{\sharp}(\xi)\rangle
=\langle\xi\otimes\eta,\; r\rangle$. Note that, for any $a\in A$ and $\xi, \eta\in A^{\ast}$,
\begin{align*}
0&=\langle\xi\otimes\eta,\; \big(\fl_{A}(a)\otimes\id+\id\otimes
(\fl_{A}+\fr_{A})(a)\big)(r)\rangle\\
&=-\langle\fl^{\ast}_{A}(a)(\xi)\otimes\eta+\xi\otimes\fl^{\ast}_{A}(a)(\eta)
+\xi\otimes\fr^{\ast}_{A}(a)(\eta),\ \ r\rangle\\
&=-\langle\eta,\ \ r^{\sharp}(\fl^{\ast}_{A}(a)(\xi))\rangle
-\langle\fl^{\ast}_{A}(a)(\eta),\ \ r^{\sharp}(\xi)\rangle
-\langle\fr^{\ast}_{A}(a)(\eta),\ \ r^{\sharp}(\xi)\rangle\\
&=\langle\eta,\ \ a\diamond r^{\sharp}(\xi)+r^{\sharp}(\xi)\diamond a
-r^{\sharp}(\fl^{\ast}_{A}(a)(\xi))\rangle,
\end{align*}
we get $\Phi(a)(r)=0$ if and only if for any $\xi\in A^{\ast}$,
$r^{\sharp}(\fl^{\ast}_{A}(a)(\xi))=a\diamond r^{\sharp}(\xi)+r^{\sharp}(\xi)\diamond a$.
Moreover, if $r$ is symmetric, i.e., $r=\tau(r)$, for any $\eta\in A^{\ast}$, we have
$r^{\sharp}(\fr^{\ast}_{A}(a)(\eta))=-r^{\sharp}(\eta)\diamond a$.
Let $A$ be vector space, for any $r\in A\otimes A$, it can be written as the sum of
symmetric part $\mathfrak{s}(r)$ and skew-symmetric part $\mathfrak{a}(r)$, i.e.,
$\mathfrak{s}(r)=\frac{1}{2}(r+\tau(r))$, $\mathfrak{a}(r)=\frac{1}{2}(r-\tau(r))
\in A\otimes A$ satisfying $r=\mathfrak{s}(r)+\mathfrak{a}(r)$.

\begin{pro}\label{pro:inv-bia}
Let $(A, \diamond)$ be a Novikov algebra, $r=\sum_{i}x_{i}\otimes y_{i}\in A\otimes A$,
and $\delta_{r}: A\rightarrow A\otimes A$ be a linear map defined by Eq. \eqref{cobnov}.
If $r$ is a solution of NYBE in $(A, \diamond)$ and the symmetric part $\mathfrak{s}(r)$
of $r$ is invariant, i.e., $\Phi(a)(\mathfrak{s}(r))=0$ for any $a\in A$, then
$(A, \diamond, \delta_{r})$ is a Novikov bialgebra, which is called a {\rm
quasi-triangular Novikov bialgebra} associated with $r$.
\end{pro}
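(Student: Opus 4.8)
The plan is to invoke Proposition~\ref{pro:cob-nov}: since $\delta_{r}$ is defined by \eqref{cobnov}, the triple $(A,\diamond,\delta_{r})$ is a Novikov bialgebra precisely when the five identities \eqref{bia1}--\eqref{bia5} hold for all $a,a_{1},a_{2}\in A$. I would verify them block by block, using only the two hypotheses $\mathbf{N}_{r}=0$ (the NYBE of Definition~\ref{def:dNPbi}) and $\Phi(a)(\mathfrak{s}(r))=0$ (invariance of the symmetric part, Definition~\ref{def:invar}).

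The first three identities are immediate. The computation preceding the statement already rewrites \eqref{bia1}, \eqref{bia2}, \eqref{bia3} as the equivalent identities \eqref{bia11}, \eqref{bia12}, \eqref{bia13}, each of which is assembled solely from the expressions $\Phi(a_{1})(r+\tau(r))$ and $\Phi(a_{2})(r+\tau(r))$. Since $r+\tau(r)=2\mathfrak{s}(r)$ and $\Phi(a)$ is linear, the invariance hypothesis gives $\Phi(a)(r+\tau(r))=2\Phi(a)(\mathfrak{s}(r))=0$ for every $a$, so \eqref{bia11}--\eqref{bia13}, and hence \eqref{bia1}--\eqref{bia3}, hold automatically.

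For \eqref{bia5} I would first use the NYBE to replace $-r_{12}\diamond r_{23}-r_{23}\diamond r_{12}-r_{13}\diamond r_{12}$ by $r_{13}\diamond r_{23}$; the bracketed argument then collapses to $r_{13}\diamond(\tau(r)+r)_{23}=2\,r_{13}\diamond\mathfrak{s}(r)_{23}$. It remains to check that $(\id\otimes\id\otimes(\fl_{A}+\fr_{A})(a))$ applied to this tensor is symmetric in its last two legs, so that the antisymmetrizer $\id\otimes\id\otimes\id-\id\otimes\tau$ annihilates it. The useful structural fact here is that, by the pre-Lie and Novikov identities, $(\fl_{A}+\fr_{A})(a)$ acts on a product as a left-multiplication derivation, namely $(\fl_{A}+\fr_{A})(a)(y\diamond v)=(a\diamond y)\diamond v+y\diamond(a\diamond v)$; distributing $a$ in this way and then transporting it across the two legs of $\mathfrak{s}(r)$ by means of the invariance relation $\Phi(a)(\mathfrak{s}(r))=0$ together with its image under $\tau$ (which exploits $\tau(\mathfrak{s}(r))=\mathfrak{s}(r)$) yields the required symmetry.

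The genuinely laborious step, and the one I expect to be the \emph{main obstacle}, is \eqref{bia4}. Unlike \eqref{bia5}, its argument contains the mixed term $\tau(r)_{12}\diamond r_{13}$ together with $r_{23}\diamond r_{13}$, neither of which is a summand of $\mathbf{N}_{r}$, so the NYBE cannot be applied verbatim. My plan is to split $r=\mathfrak{s}(r)+\mathfrak{a}(r)$ throughout and rewrite $\tau(r)=2\mathfrak{s}(r)-r$ in the offending term, then sort the resulting quadratic expressions into two groups: those that reassemble, after the Novikov identities and a relabelling of tensor legs, into copies of $\mathbf{N}_{r}$ acted on by $\fl_{A}(a)$ in one leg, which vanish by the NYBE; and the remaining terms, each carrying a factor $\mathfrak{s}(r)$, which I would collapse using $\Phi(a)(\mathfrak{s}(r))=0$ exactly as for the first three identities. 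The last two summands of \eqref{bia4}, namely $((\id\otimes\fl_{A}(a))(r+\tau(r)))_{12}\diamond r_{23}$ and $((\fl_{A}(a)\otimes\id)(r))_{13}\diamond(r+\tau(r))_{12}$, already display $r+\tau(r)=2\mathfrak{s}(r)$ and are absorbed by the same invariance input. Since the whole of \eqref{bia4} is linear in $a$ and quadratic in $r$, the difficulty is not conceptual but combinatorial: keeping track of all tensor-leg placements and signs while matching terms against $\mathbf{N}_{r}$ on one hand and against the invariance identity on the other.
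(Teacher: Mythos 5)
Your overall strategy --- reduce to the five conditions of Proposition \ref{pro:cob-nov} and verify them from $\mathbf{N}_{r}=0$ and $\Phi(a)(\mathfrak{s}(r))=0$ --- is exactly the paper's, and your handling of \eqref{bia1}--\eqref{bia3} coincides with it (via the equivalent forms \eqref{bia11}--\eqref{bia13} and linearity of $\Phi(a)$). Your argument for \eqref{bia5} is also sound, and in fact more explicit than the paper, which dismisses \eqref{bia5} with a single ``similarly'': the NYBE does collapse the bracket to $r_{13}\diamond(r+\tau(r))_{23}$, your identity $(\fl_{A}+\fr_{A})(a)(y\diamond v)=(a\diamond y)\diamond v+y\diamond(a\diamond v)$ is a correct consequence of the pre-Lie and right-commutativity axioms, and transporting $\fl_{A}(a)$, $\fr_{A}(a)$ across the two legs of $\mathfrak{s}(r)$ by the operator form of invariance (Lemma \ref{lem:syminva}, together with $\mathfrak{s}(r)^{\sharp}\fr_{A}^{\ast}(a)=-\fr_{A}(a)\mathfrak{s}(r)^{\sharp}$, which follows from invariance plus symmetry) does produce the required symmetry in the last two legs.

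The genuine gap is \eqref{bia4}, which you yourself single out as the crux and then do not carry out; moreover, the mechanism you propose for the residual terms is not strong enough. You assert that, after matching terms against $\mathbf{N}_{r}$, every leftover ``carries a factor $\mathfrak{s}(r)$'' and collapses ``using $\Phi(a)(\mathfrak{s}(r))=0$ exactly as for the first three identities''. It does not: when the paper expands this condition (it verifies the coalgebra identity $(\id\otimes\delta_{r})\delta_{r}-(\tau\otimes\id)(\id\otimes\delta_{r})\delta_{r}-(\delta_{r}\otimes\id)\delta_{r}+(\tau\otimes\id)(\delta_{r}\otimes\id)\delta_{r}=0$ rather than \eqref{bia4} verbatim), what survives after removing all $\mathbf{N}_{r}$-blocks and all visible $\Phi$-blocks is
\begin{align*}
\sum_{j}\Big(\big(\id\otimes\fl_{A}(a\diamond x_{j})+\fr_{A}(a\diamond x_{j})\otimes\id
-\fr_{A}(x_{j})\fr_{A}(a)\otimes\id-\id\otimes\fl_{A}(a)\fr_{A}(x_{j})\big)(r+\tau(r))\Big)\otimes y_{j},
\end{align*}
which, because of the composed operators $\fr_{A}(x_{j})\fr_{A}(a)$ and $\fl_{A}(a)\fr_{A}(x_{j})$, is \emph{not} of the form $(\mbox{operator})\circ\Phi(\cdot)$ applied to $r+\tau(r)$. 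Killing this block needs precisely the second-order argument you reserved for \eqref{bia5}: pair against $\xi\otimes\eta$, rewrite everything through $\mathfrak{s}(r)^{\sharp}$ using $\mathfrak{s}(r)^{\sharp}\fl_{A}^{\ast}(b)=(\fl_{A}+\fr_{A})(b)\mathfrak{s}(r)^{\sharp}$ and $\mathfrak{s}(r)^{\sharp}\fr_{A}^{\ast}(b)=-\fr_{A}(b)\mathfrak{s}(r)^{\sharp}$, and then check, with $u=\mathfrak{s}(r)^{\sharp}(\xi)$, that $(a\diamond x_{j})\diamond u+u\diamond(a\diamond x_{j})-(u\diamond x_{j})\diamond a-a\diamond(u\diamond x_{j})=0$ by the pre-Lie and right-commutativity identities. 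Until that step is supplied, \eqref{bia4}, and hence the proposition, is not proved. (A further simplification you missed: the ``mixed'' terms are mere leg permutations of NYBE summands, namely $\tau(r)_{12}\diamond r_{13}=(\tau\otimes\id)(r_{12}\diamond r_{23})$ and $r_{23}\diamond r_{13}=(\tau\otimes\id)(r_{13}\diamond r_{23})$, so the symmetric/antisymmetric splitting of $r$ you build your sorting on is unnecessary and only adds bookkeeping.)
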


\begin{proof}
If $r$ is a solution of NYBE in $(A, \diamond)$ and the symmetric part $\mathfrak{s}(r)$
of $r$ is invariant, then Eqs. \eqref{bia11}-\eqref{bia13} hold. That is, Eqs. \eqref{bia1}
-\eqref{bia3} hold.

For Eq. \eqref{bia4}, by direct calculation, we get
\begin{align*}
&\;(\id\otimes\delta_{r})\delta_{r}(a)-(\tau\otimes\id)(\id\otimes\delta_{r})\delta_{r}(a)
-(\delta_{r}\otimes\id)\delta_{r}(a)+(\tau\otimes\id)(\delta_{r}\otimes\id)\delta_{r}(a)\\
=&\;\big(\id\otimes\id\otimes(\fl_{A}+\fr_{A})(a)\big)\big((\tau\otimes\id)(\mathbf{N}_{r})
-\mathbf{N}_{r}\big)\\
&+\;\sum_{i,j} (a\diamond x_{i})\otimes(y_{i}\diamond x_{j})\otimes y_{j}
+(a\diamond x_{i})\otimes x_{j}\otimes(y_{i}\diamond y_{j})
+(a\diamond x_{i})\otimes x_{j}\otimes(y_{j}\diamond y_{i})\qquad\qquad\\[-4mm]
\end{align*}
\begin{align*}
&\qquad+x_{i}\otimes((y_{i}\diamond a)\diamond x_{j})\otimes y_{j}
-(y_{i}\diamond x_{j})\otimes(a\diamond x_{i})\otimes y_{j}
-x_{i}\otimes(a\diamond x_{j})\otimes(y_{j}\diamond y_{i})\\
&\qquad-x_{i}\otimes(a\diamond x_{j})\otimes(y_{i}\diamond y_{j})
-((y_{i}\diamond a)\diamond x_{j})\otimes x_{i}\otimes y_{j}
-((a\diamond x_{i})\diamond x_{j})\otimes y_{j}\otimes y_{i}\\
&\qquad-x_{i}\otimes(y_{i}\diamond(a\diamond x_{j}))\otimes y_{j}
+y_{i}\otimes((a\diamond x_{j})\diamond x_{i})\otimes y_{j}
+(y_{i}\diamond(a\diamond x_{j}))\otimes x_{i}\otimes y_{j}\\
=&\;\big(\id\otimes\id\otimes(\fl_{A}+\fr_{A})(a)\big)\big((\tau\otimes\id)(\mathbf{N}_{r})
-\mathbf{N}_{r}\big)+(\id\otimes\id\otimes\id-\tau\otimes\id)
(\fl_{A}(a)\otimes\id\otimes\id)(\mathbf{N}_{r})\\
&\qquad+(\tau\otimes\id-\id\otimes\id\otimes\id)(\id\otimes\fl_{A}(a)\otimes\id)
(\mathbf{N}_{r})\\[-1mm]
&\;+\sum_{j}\Big(\big(\id\otimes\fl_{A}(ax_{j})+\fr_{A}(ax_{j})\otimes\id
-\fr_{A}(x_{j})\fr_{A}(a)\otimes\id-\id\otimes\fl_{A}(a)\fr_{A}(x_{j})\big)
(r+\tau(r))\Big)\otimes y_{j}\\[-3mm]
&\qquad\quad+\sum_{j}\Big(\big(\id\otimes\fl_{A}(a)-\fl_{A}(a)\otimes\id\big)
\big(\Phi(a_{2})(r+\tau(r))\big)\Big)\otimes y_{j},
\end{align*}
for any $a\in A$, since
\begin{align*}
&\;\sum_{i,j} -(a\diamond x_{i})\otimes(x_{j}\diamond y_{i})\otimes y_{j}
-(a\diamond(x_{j}\diamond x_{i}))\otimes y_{i}\otimes y_{j}
-(a\diamond(y_{i}\diamond x_{j}))\otimes x_{i}\otimes y_{j}\\[-4mm]
&\qquad-(a\diamond(x_{j}\diamond y_{i}))\otimes x_{i}\otimes y_{j}
-(a\diamond y_{i})\otimes(x_{j}\diamond x_{i})\otimes y_{j}
+x_{i}\otimes((y_{i}\diamond a)\diamond x_{j})\otimes y_{j}\\
&\qquad+(x_{j}\diamond y_{i})\otimes(a\diamond x_{i})\otimes y_{j}
+y_{i}\otimes(a\diamond(x_{j}\diamond x_{i}))\otimes y_{j}
+x_{i}\otimes(a\diamond(y_{i}\diamond x_{j}))\otimes y_{j}\\
&\qquad+x_{i}\otimes(a\diamond(x_{j}\diamond y_{i}))\otimes y_{j}
+(x_{j}\diamond x_{i})\otimes(a\diamond y_{i})\otimes y_{j}
-((y_{i}\diamond a)\diamond x_{j})\otimes x_{i}\otimes y_{j}\\
&\qquad-((a\diamond x_{j})\diamond x_{i})\otimes y_{i}\otimes y_{j}
-x_{i}\otimes(y_{i}\diamond(a\diamond x_{j}))\otimes y_{j}
+y_{i}\otimes((a\diamond x_{j})\diamond x_{i})\otimes y_{j}\\
&\qquad+(y_{i}\diamond(a\diamond x_{j}))\otimes x_{i}\otimes y_{j}\\
=&\;\sum_{j}\Big(\big(\id\otimes\fl_{A}(ax_{j})+\fr_{A}(ax_{j})\otimes\id
-\fr_{A}(x_{j})\fr_{A}(a)\otimes\id-\id\otimes\fl_{A}(a)\fr_{A}(x_{j})\big)
(r+\tau(r))\Big)\otimes y_{j}\\[-4mm]
&\qquad+\sum_{j}\Big(\big(\id\otimes\fl_{A}(a)-\fl_{A}(a)\otimes\id\big)
\big(\Phi(a_{2})(r+\tau(r))\big)\Big)\otimes y_{j}.
\end{align*}
Note that, for any $a\in A$ and $\xi, \eta\in A^{\ast}$,
\begin{align*}
&\;\langle\xi\otimes\eta,\ \ \Big(\id\otimes\fl_{A}(a\diamond x_{j})
+\fr_{A}(a\diamond x_{j})\otimes\id-\fr_{A}(x_{j})\fr_{A}(a)\otimes\id
-\id\otimes\fl_{A}(a)\fr_{A}(x_{j})\Big)(\hat{r})\rangle\\
=&\;-\langle\xi\otimes\fl_{A}^{\ast}(a\diamond x_{j})(\eta)
+\fr_{A}^{\ast}(a\diamond x_{j})(\xi)\otimes\eta
+\fr_{A}^{\ast}(a)(\fr_{A}^{\ast}(x_{j})(\xi))\otimes\eta
+\xi\otimes\fr_{A}^{\ast}(x_{j})(\fl_{A}^{\ast}(a)(\eta)),\ \ \hat{r}\rangle\\
=&\;-\langle\fl_{A}^{\ast}(a\diamond x_{j})(\eta),\ \ \hat{r}^{\sharp}(\xi)\rangle
-\langle\eta,\ \ \hat{r}^{\sharp}(\fr_{A}^{\ast}(a\diamond x_{j})(\xi))\rangle\\[-1mm]
&\qquad-\langle\eta,\ \ \hat{r}^{\sharp}(\fr_{A}^{\ast}(a)(\fr_{A}^{\ast}(x_{j})(\xi)))\rangle
-\langle\fr_{A}^{\ast}(x_{j})(\fl_{A}^{\ast}(a)(\eta)),\ \hat{r}^{\sharp}(\xi)\rangle\\
=&\;\langle\eta,\ \ (a\diamond x_{j})\diamond\hat{r}^{\sharp}(\xi)
+\hat{r}^{\sharp}(\xi)(a\diamond x_{j})-(\hat{r}^{\sharp}(\xi)\diamond x_{j})\diamond a
-a\diamond(\hat{r}^{\sharp}(\xi)\diamond x_{j})\rangle\\
=&\;0,
\end{align*}
we get Eq. \eqref{bia4} holds in this case. Similarly, we also have Eq. \eqref{bia5} holds.
Thus, by Proposition \ref{pro:cob-nov}, $(A, \diamond, \delta_{r})$ is a
Novikov bialgebra.
\end{proof}

Obviously, the triangular Novikov bialgebra is a subclass of quasi-triangular
Novikov bialgebra, an important subclass of quasi-triangular Novikov bialgebra.
In the next section, we will give another important subclass of quasi-triangular
Novikov bialgebra. Here, we give a quasi-triangular Novikov bialgebra from
any Novikov bialgebra.

\begin{pro}\label{pro:double}
Let $(A, \diamond, \delta)$ be a Novikov bialgebra. Then there is a quasi-triangular Novikov
bialgebra structure on $A \oplus A^{\ast}$, where the Novikov bialgebra structure on
$A\oplus A^{\ast}$ is $(\ast, \delta_{\tilde{r}})$, $``\ast"$ is given by Eq.
\eqref{product} and $``\delta_{\tilde{r}}"$ is given by Eq. \eqref{cobnov}, for some
$\tilde{r}\in(A\oplus A^{\ast})\otimes(A\oplus A^{\ast})$. The quasi-triangular Novikov
bialgebra $(A\oplus A^{\ast}, \ast, \delta_{\tilde{r}})$ is called the {\rm Novikov
double of Novikov bialgebra $(A, \diamond, \delta)$}.
\end{pro}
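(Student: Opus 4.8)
The plan is to realize $\tilde r$ as the canonical element determined by the splitting $\mathcal A=A\oplus A^{\ast}$ and then to check the two hypotheses of Proposition~\ref{pro:inv-bia}, namely that $\tilde r$ solves the NYBE in the ambient Novikov algebra and that its symmetric part is invariant; once these hold, Proposition~\ref{pro:inv-bia} delivers the quasi-triangular Novikov bialgebra $(A\oplus A^{\ast},\ast,\delta_{\tilde r})$ for free.

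First I would fix the Novikov algebra on $\mathcal A=A\oplus A^{\ast}$. By Theorem~\ref{thm:Novequi}, the Novikov bialgebra $(A,\diamond,\delta)$ is equivalent to the matched pair $(A,A^{\ast},\fl_{A}^{\ast}+\fr_{A}^{\ast},-\fr_{A}^{\ast},\fl_{A^{\ast}}^{\ast}+\fr_{A^{\ast}}^{\ast},-\fr_{A^{\ast}}^{\ast})$, whose crossed product is the Novikov algebra $(\mathcal A,\ast)$ of Eq.~\eqref{product}; here $(A,\diamond)$ and $(A^{\ast},\diamond_{\delta})$ are Novikov subalgebras and the pairing form $\mathcal B$ is invariant. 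Fixing a basis $\{e_{i}\}$ of $A$ with dual basis $\{e_{i}^{\ast}\}$ of $A^{\ast}$, I set
$$\tilde r=\sum_{i}(e_{i},0)\otimes(0,e_{i}^{\ast})\in\mathcal A\otimes\mathcal A,$$
and take $\delta_{\tilde r}$ to be given by Eq.~\eqref{cobnov} for the algebra $(\mathcal A,\ast)$.

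For the invariance of the symmetric part, observe that $\tilde r+\tau(\tilde r)=\sum_{i}\big((e_{i},0)\otimes(0,e_{i}^{\ast})+(0,e_{i}^{\ast})\otimes(e_{i},0)\big)$ is exactly the Casimir element of the nondegenerate symmetric form $\mathcal B$. Using the reformulation recorded just before Proposition~\ref{pro:inv-bia}, which turns $\Phi(u)(\cdot)=0$ into the corresponding conditions on $(\cdot)^{\sharp}$, the identity $\Phi(u)(\tilde r+\tau(\tilde r))=0$ for all $u\in\mathcal A$ is equivalent to $\mathcal B$ being invariant on $(\mathcal A,\ast)$. The latter holds by the Manin-triple description in Theorem~\ref{thm:Novequi}, so $\mathfrak s(\tilde r)=\tfrac12(\tilde r+\tau(\tilde r))$ is invariant.

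The main work, and the expected obstacle, is to verify $\mathbf N_{\tilde r}=0$ in $(\mathcal A,\ast)$. Here I would expand the four terms $\tilde r_{13}\ast\tilde r_{23}$, $\tilde r_{12}\ast\tilde r_{23}$, $\tilde r_{23}\ast\tilde r_{12}$ and $\tilde r_{13}\ast\tilde r_{12}$ via Eq.~\eqref{product}: each $\ast$-product of legs of $\tilde r$ couples an element of $A$ with an element of $A^{\ast}$ and so splits into an $A$-part, governed by $\fl_{A^{\ast}}^{\ast}+\fr_{A^{\ast}}^{\ast}$ and $-\fr_{A^{\ast}}^{\ast}$, and an $A^{\ast}$-part, governed by $\fl_{A}^{\ast}+\fr_{A}^{\ast}$ and $-\fr_{A}^{\ast}$, together with the internal products $\diamond$ and $\diamond_{\delta}=\delta^{\ast}$. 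Grouping the resulting tensors by their $A$/$A^{\ast}$ type and pairing against arbitrary $\xi\otimes\eta\otimes\zeta$, the vanishing of $\mathbf N_{\tilde r}$ reduces, homogeneous piece by homogeneous piece, precisely to the defining compatibility identities of the Novikov bialgebra $(A,\diamond,\delta)$ (equivalently, to the representation axioms of the matched pair of Theorem~\ref{thm:Novequi} combined with the Novikov coalgebra relations for $\delta$). The difficulty is purely in the bookkeeping: the mixed products intertwine $\diamond$ with $\delta^{\ast}$, so the care lies in matching each homogeneous component to the correct bialgebra relation rather than in any single manipulation. With $\mathbf N_{\tilde r}=0$ and $\mathfrak s(\tilde r)$ invariant established, Proposition~\ref{pro:inv-bia} shows that $(A\oplus A^{\ast},\ast,\delta_{\tilde r})$ is a quasi-triangular Novikov bialgebra, which is the desired Novikov double.
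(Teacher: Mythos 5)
Your proposal is correct and its skeleton is the same as the paper's: you take the same canonical element $\tilde{r}=\sum_{i}e_{i}\otimes f_{i}$, the same crossed-product structure $\ast$ supplied by Theorem \ref{thm:Novequi}, and you reduce the claim, via Proposition \ref{pro:inv-bia}, to the two checks $\mathbf{N}_{\tilde{r}}=0$ and $\Phi(u)(\tilde{r}+\tau(\tilde{r}))=0$. The genuine difference is in the invariance check. The paper verifies it by expanding $\big(\fl_{\mathcal{A}}(e_{k})\otimes\id+\id\otimes(\fl_{\mathcal{A}}+\fr_{\mathcal{A}})(e_{k})\big)(\tilde{r}+\tau(\tilde{r}))$ on basis elements and cancelling terms (with the $f_{k}$ case dismissed as ``similarly''), whereas you observe that $\tilde{r}+\tau(\tilde{r})$ is the Casimir element of the standard pairing $\mathcal{B}$ of the Manin triple, so that $(\tilde{r}+\tau(\tilde{r}))^{\sharp}$ is the isomorphism induced by $\mathcal{B}$; the $\sharp$-reformulation preceding Proposition \ref{pro:inv-bia} then converts $\Phi(u)(\tilde{r}+\tau(\tilde{r}))=0$ into an operator identity which, since $(\tilde{r}+\tau(\tilde{r}))^{\sharp}$ is self-adjoint, follows from the invariance of $\mathcal{B}$ guaranteed by Theorem \ref{thm:Novequi}. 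This is precisely the correspondence the paper itself isolates later as Lemma \ref{lem:syminva} and exploits in Theorem \ref{thm:quafact}; invoking it here is cleaner than the paper's computation and explains conceptually why invariance is automatic. (Strictly you only need the one direction: invariant form implies invariant Casimir; that direction does hold, by the symmetry of $\mathcal{B}$, exactly as in Lemma \ref{lem:syminva}.)

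One caveat on the half you leave as ``bookkeeping''. Your strategy for $\mathbf{N}_{\tilde{r}}=0$ (expand the four products, pair against arbitrary elements) is the paper's strategy and does succeed, but your prediction of the mechanism is off: when the paper pairs $\tilde{r}_{13}\ast\tilde{r}_{23}+\tilde{r}_{12}\ast\tilde{r}_{23}+\tilde{r}_{23}\ast\tilde{r}_{12}+\tilde{r}_{13}\ast\tilde{r}_{12}$ against basis tensors, the eight resulting scalars cancel in pairs by pure duality bookkeeping (e.g. $\langle f_{t}\diamond_{\delta}f_{l},\;e_{p}\rangle$ against $-\langle e_{p},\;f_{t}\diamond_{\delta}f_{l}\rangle$); the Novikov bialgebra compatibility identities are \emph{not} what makes $\mathbf{N}_{\tilde{r}}$ vanish. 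They enter only through Theorem \ref{thm:Novequi}, to guarantee that $\ast$ is a genuine Novikov product so that Proposition \ref{pro:inv-bia} applies. So the computation is easier than you anticipate, and hunting for a homogeneous-component-by-component match with the bialgebra axioms would be wasted effort; this is a gap of execution rather than of idea, and carrying out the expansion closes it.
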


\begin{proof}
Let $\tilde{r}\in A \otimes A^{\ast}\subset (A \oplus A^{\ast})\otimes(A \oplus A^{\ast})$
corresponds to the identity map $\id: A\rightarrow A$. That is, $\tilde{r}=\sum_{i=1}^{n}
e_{i}\otimes f_{i}$, where $\{e_{1}, e_{2}, \cdots, e_{n}\}$ is a basis of $A$ and
$\{f_{1}, f_{2}, \cdots, f_{n}\}$ be its dual basis in $A^{\ast}$.
Since $(A, \diamond, \delta)$ is a Novikov bialgebra, there is a Novikov
algebra structure $``\diamond_{\delta}"$ on $A^{\ast}$, and a Novikov algebra structure
$``\ast"$ on $A \oplus A^{\ast}$ which is given by $(a_{1}, \xi_{1})\ast(a_{2}, \xi_{2})
:=\big(a_{1}\diamond y+\fl_{A^{\ast}}^{\ast}(\xi_{1})(a_{2})
+\fr_{A^{\ast}}^{\ast}(\xi_{1})(a_{2})-\fr_{A^{\ast}}^{\ast}(\xi_{2})(a_{1}),\ \
\xi_{1}\diamond_{\delta}\xi_{2}+\fl_{A}^{\ast}(a_{1})(\xi_{2})+\fr_{A}^{\ast}(a_{1})(\xi_{2})
-\fr_{A}^{\ast}(a_{2})(\xi_{1})\big)$, for any $a_{1}, a_{2}\in A$,
$\xi_{1}, \xi_{2}\in A^{\ast}$.
We will show that there is a quasi-triangular Novikov bialgebra structure on
Novikov algebra $(A\oplus A^{\ast}, \ast)$. First, for any $(e_{s}, f_{t})$,
$(e_{k}, f_{l})$, $(e_{p}, f_{q})\in A\oplus A^{\ast}$,
\begin{align*}
&\; \langle\tilde{r}_{13}\ast\tilde{r}_{23}+\tilde{r}_{12}\ast\tilde{r}_{23}
+\tilde{r}_{23}\ast\tilde{r}_{12}+\tilde{r}_{13}\ast\tilde{r}_{12},\ \
(e_{s}, f_{t})\otimes(e_{k}, f_{l})\otimes(e_{p}, f_{q})\rangle \\
=&\;\sum_{i,j}\langle e_{i}\otimes e_{j}\otimes f_{i}\ast f_{j}
+e_{i}\otimes f_{i}\ast e_{j}\otimes f_{j}+e_{j}\otimes e_{i}\ast f_{j}\otimes f_{i}
+e_{i}\ast e_{j}\otimes f_{j}\otimes f_{i}, \\[-4mm]
&\qquad\qquad\qquad\qquad\qquad\qquad\qquad\qquad\qquad\qquad\qquad (e_{s}, f_{t})
\otimes(e_{k}, f_{l})\otimes(e_{p}, f_{q})\rangle \\
=&\;\sum_{i,j}\Big(\langle e_{i}, f_{t}\rangle\langle e_{j}, f_{l}\rangle
\langle f_{i}\diamond_{\delta}f_{j},\; e_{p}\rangle-\langle e_{i}, f_{t}\rangle
\langle e_{j},\; f_{i}\diamond_{\delta}f_{l}\rangle\langle f_{j}, e_{p}\rangle
-\langle e_{i}, f_{t}\rangle\langle e_{j},\; f_{l}\diamond_{\delta}f_{i}\rangle
\langle f_{j}, e_{p}\rangle\\[-4mm]
&\qquad\quad+\langle e_{i}, f_{t}\rangle\langle f_{i}, e_{k}\diamond e_{j}\rangle
\langle f_{j}, e_{p}\rangle
-\langle e_{j}, f_{t}\rangle\langle f_{j}, e_{i}\diamond e_{k}\rangle
\langle f_{i}, e_{p}\rangle
-\langle e_{j}, f_{t}\rangle\langle f_{j}, e_{k}\diamond e_{i}\rangle
\langle f_{i}, e_{p}\rangle\\[-1mm]
&\qquad\quad+\langle e_{j}, f_{t}\rangle\langle e_{i},\; f_{l}\diamond_{\delta}f_{j}\rangle
\langle f_{i}, e_{p}\rangle+\langle e_{i}\diamond e_{j}, f_{t}\rangle
\langle f_{j}, e_{k}\rangle\langle f_{i}, e_{p}\rangle\Big) \\
=&\;\langle f_{t}\diamond_{\delta}f_{l},\; e_{p}\rangle
-\langle e_{p},\; f_{t}\diamond_{\delta}f_{l}\rangle
-\langle e_{p},\; f_{l}\diamond_{\delta}f_{t}\rangle
+\langle f_{t}, e_{k}\diamond e_{p} \rangle\\[-1mm]
&\qquad-\langle f_{t}, e_{p}\diamond e_{k}\rangle-\langle f_{t}, e_{k}\diamond e_{p}\rangle
+\langle e_{p},\; f_{l}\diamond_{\delta}f_{t}\rangle
+\langle e_{p}\diamond e_{k}, f_{t}\rangle\\
=&\;0.
\end{align*}
That is, $\tilde{r}$ is a solution of NYBE in $(A\oplus A^{\ast}, \ast)$.
Second, for any $e_{k}\in A\oplus A^{\ast}$, $k=1, 2, \cdots, n$, we have
\begin{align*}
&\;\big(\fl_{A\oplus A^{\ast}}(e_{k})\otimes\id+\id\otimes(\fl_{A\oplus A^{\ast}}
+\fr_{A\oplus A^{\ast}})(e_{k})\big)(\tilde{r}+\tau(\tilde{r})) \\
=&\;\sum_{i}(e_{k}\diamond e_{i})\otimes f_{i}+(e_{k}\ast f_{i})\otimes e_{i}
+e_{i}\otimes(e_{k}\ast f_{i})\\[-4mm]
&\qquad+e_{i}\otimes(f_{i}\ast e_{k})+f_{i}\otimes(e_{k}\diamond e_{i})
+f_{i}\otimes(e_{i}\diamond e_{k}) \\
=&\;\sum_{i,l}(e_{k}\diamond e_{i})\otimes f_{i}
-\langle f_{i},\; e_{k}\diamond e_{l}\rangle f_{l}\otimes e_{i}
-\langle f_{i},\; e_{l}\diamond e_{k}\rangle f_{l}\otimes e_{i}
+\langle e_{k},\; f_{l}\diamond_{\delta}f_{i}\rangle e_{l}\otimes e_{i}\\[-4mm]
&\qquad-e_{i}\otimes\langle f_{i},\; e_{k}\diamond e_{l}\rangle f_{l}
-e_{i}\otimes\langle f_{i},\; e_{l}\diamond e_{k}\rangle f_{l}
+e_{i}\otimes\langle e_{k},\; f_{l}\diamond_{\delta}f_{i}\rangle e_{l}
-e_{i}\otimes\langle e_{k},\; f_{i}\diamond_{\delta}f_{l}\rangle e_{l}\\
&\qquad-e_{i}\otimes\langle e_{k},\; f_{l}\diamond_{\delta}f_{i}\rangle e_{l}
+e_{i}\otimes\langle f_{i},\; e_{l}\diamond e_{k}\rangle f_{l}
+f_{i}\otimes(e_{k}\diamond e_{i})+f_{i}\otimes(e_{i}\diamond e_{k})\\
=&\;\sum_{i,l} (e_{k}\diamond e_{i})\otimes f_{i}-f_{l}\otimes(e_{k}\diamond e_{l})
-f_{l}\otimes(e_{l}\diamond e_{k})
+\langle e_{k},\; f_{l}\diamond_{\delta}f_{i}\rangle e_{l}\otimes e_{i}\\[-4mm]
&\qquad-(e_{k}\diamond e_{l})\otimes f_{l}-(e_{l}\diamond e_{k})\otimes f_{l}
+e_{i}\otimes\langle e_{k},\; f_{l}\diamond_{\delta}f_{i}\rangle e_{l}
-e_{i}\otimes\langle e_{k},\; f_{i}\diamond_{\delta}f_{l}\rangle e_{l}\\
&\qquad-e_{i}\otimes\langle e_{k},\; f_{l}\diamond_{\delta}f_{i}\rangle e_{l}
+(e_{l}\diamond e_{k})\otimes f_{l}+f_{i}\otimes(e_{k}\diamond e_{i})
+f_{i}\otimes(e_{i}\diamond e_{k})\\
=&\; 0.
\end{align*}
Similarly, this equation is also holds for $f_{k}\in A\oplus A^{\ast}$.
Hence, $\frac{1}{2}(\tilde{r}+\tau(\tilde{r}))$ is invariant. Therefore,
the Novikov bialgebra $(A\oplus A^{\ast}, \ast, \delta_{\tilde{r}})$ is a
quasi-triangular Novikov bialgebra.
\end{proof}

\bigskip
%%%%%%%%%%%%%%%%%%%%%%%%%%%%%%%%%%%%%%%%%%%%%%%%%%%%%%%%%%%%%%%%%%%%%%%%%%%%%%%%%
%    section  3   Factorizable Novikov bialgebras
%%%%%%%%%%%%%%%%%%%%%%%%%%%%%%%%%%%%%%%%%%%%%%%%%%%%%%%%%%%%%%%%%%%%%%%%%%%%%%%%%%%%%%
\section{Factorizable Novikov bialgebras}\label{sec:fact}
In this section, we establish the factorizable theories for Novikov bialgebras.
A factorizable Novikov bialgebra induces a factorization of the underlying Novikov
algebra and the double of any Novikov bialgebra naturally admits a factorizable Novikov
bialgebra structure. Moreover, we introduce the notion of quadratic Rota-Baxter Novikov
algebras and show that there is an one-to-one correspondence between factorizable
Novikov bialgebras and quadratic Rota-Baxter Novikov algebras of nonzero weights.
For any $r\in A\otimes A$, we have defined a linear map $r^{\sharp}: A^{\ast}
\rightarrow A$ by $\langle r^{\sharp}(\xi_{1}),\; \xi_{2}\rangle
=\langle\xi_{1}\otimes\xi_{2},\; r\rangle$. Now we define another linear map
$r^{\natural}:A^{\ast}\rightarrow A$ by
$$
\langle\xi_{1},\;r^{\natural}(\xi_{2})\rangle=-\langle\xi_{1}\otimes\xi_{2},\;r\rangle,
$$
for any $\xi_{1}, \xi_{2}\in A^{\ast}$. If $(A, \cdot, \delta_{r})$ is a Novikov bialgebra,
then the Novikov algebra structure $\diamond_{r}$ on $A^{\ast}$ dual to the comultiplication
$\delta_{r}$ defined by Eq. \eqref{cobnov} is given by
$$
\xi\diamond_{r}\eta=(\fl_{A}^{\ast}+\fr_{A}^{\ast})(r^{\sharp}(\xi))(\eta)
-\fr_{A}^{\ast}(r^{\natural}(\eta))(\xi),
$$
for any $\xi, \eta\in A^{\ast}$. Denote by $\mathcal{I}$ the operator
$$
\mathcal{I}=r^{\sharp}-r^{\natural}:\quad A^{\ast}\longrightarrow A.
$$
Then $\mathcal{I}^{\ast}=\mathcal{I}$ if we identify $(A^{\ast})^{\ast}$ with $A$.
Actually, $\frac{1}{2}\mathcal{I}$ is the contraction of the symmetric part $\mathfrak{s}(r)$
of $r$, which means that $\frac{1}{2}\langle\mathcal{I}(\xi),\; \eta\rangle
=\mathfrak{s}(r)(\xi, \eta)$. If $r$ is skew-symmetric, then $\mathcal{I}=0$.
Now we give another characterization of the invariant condition for $r
+\tau(r)\in A\otimes A$.

\begin{lem}\label{lem:syminva}
Let $(A, \diamond)$ be a Novikov algebra and $r\in A\otimes A$. The symmetric part
$\mathfrak{s}(r)$ of $r$ is invariant if and only if $\mathcal{I}\fl_{A}^{\ast}(a)=
\fl_{A}(a)\mathcal{I}+\fr_{A}(a)\mathcal{I}$, if and only if
$\fl_{A}(a)\mathcal{I}=\mathcal{I}\fl_{A}^{\ast}(a)+\mathcal{I}\fr_{A}^{\ast}(a)$,
for any $a\in A$, where $\mathcal{I}=r^{\sharp}-r^{\natural}: A^{\ast}\rightarrow A$.
\end{lem}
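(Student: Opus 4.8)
The plan is to reduce the invariance of $\mathfrak{s}(r)$ to an operator identity for $\mathcal{I}$ by recycling the pointwise computation performed just before Proposition \ref{pro:inv-bia}. That computation, which uses only the definitions of $\Phi$, of $\fl_{A}^{\ast}$, and of the contraction $\sharp$, shows for \emph{any} element $\hat r\in A\otimes A$ and all $a\in A$, $\xi,\eta\in A^{\ast}$ that
\[
\langle\xi\otimes\eta,\;\Phi(a)(\hat r)\rangle
=\big\langle\eta,\;\fl_{A}(a)(\hat r^{\sharp}(\xi))+\fr_{A}(a)(\hat r^{\sharp}(\xi))
-\hat r^{\sharp}(\fl_{A}^{\ast}(a)(\xi))\big\rangle .
\]
By nondegeneracy of the pairing, $\Phi(a)(\hat r)=0$ for all $a$ is therefore equivalent to the operator identity $\hat r^{\sharp}\fl_{A}^{\ast}(a)=\fl_{A}(a)\hat r^{\sharp}+\fr_{A}(a)\hat r^{\sharp}$ for all $a$.

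First I would record the elementary identity $(r+\tau(r))^{\sharp}=\mathcal{I}$. Writing $r=\sum_{i} x_{i}\otimes y_{i}$, one computes $\mathcal{I}(\xi)=r^{\sharp}(\xi)-r^{\natural}(\xi)=\sum_{i}\big(\xi(x_{i})y_{i}+\xi(y_{i})x_{i}\big)=(r+\tau(r))^{\sharp}(\xi)$. Since $\Phi(a)$ is linear in its tensor argument and $r+\tau(r)=2\mathfrak{s}(r)$, the invariance $\Phi(a)(\mathfrak{s}(r))=0$ for all $a$ is equivalent to $\Phi(a)(r+\tau(r))=0$ for all $a$. Applying the criterion above with $\hat r=r+\tau(r)$ and substituting $\hat r^{\sharp}=\mathcal{I}$ yields the first equivalence directly: $\mathfrak{s}(r)$ is invariant if and only if $\mathcal{I}\fl_{A}^{\ast}(a)=\fl_{A}(a)\mathcal{I}+\fr_{A}(a)\mathcal{I}$ for all $a\in A$.

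For the second equivalence I would pass to adjoints, the key structural input being $\mathcal{I}^{\ast}=\mathcal{I}$, i.e. $\langle\eta,\;\mathcal{I}(\xi)\rangle=\langle\xi,\;\mathcal{I}(\eta)\rangle$ for all $\xi,\eta\in A^{\ast}$. Starting from the first identity evaluated at $\eta$ and paired against an arbitrary $\xi$, I would transfer each occurrence of $\fl_{A}(a)$ and $\fr_{A}(a)$ across the pairing using the defining relation $\langle\xi,\;\fl_{A}(a)(b)\rangle=-\langle\fl_{A}^{\ast}(a)(\xi),\;b\rangle$ and its analogue for $\fr_{A}$, and then use the symmetry of $\mathcal{I}$ to rewrite every term in the form $\langle\eta,\;(\cdots)(\xi)\rangle$. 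Since $\eta$ is arbitrary, the resulting scalar identity collapses to $\fl_{A}(a)\mathcal{I}=\mathcal{I}\fl_{A}^{\ast}(a)+\mathcal{I}\fr_{A}^{\ast}(a)$, and, the manipulation being reversible, this is equivalent to the first identity.

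I expect the only genuine pitfall to be the sign bookkeeping in this last step: the dual maps $\fl_{A}^{\ast}$ and $\fr_{A}^{\ast}$ carry the representation-theoretic minus sign rather than being bare transposes, so when taking the adjoint of $\mathcal{I}\fl_{A}^{\ast}(a)=\fl_{A}(a)\mathcal{I}+\fr_{A}(a)\mathcal{I}$ one must track two minus signs that have to cancel in order to reproduce exactly the asserted form. Everything else is a routine use of the nondegeneracy of the pairing and the self-adjointness of $\mathcal{I}$.
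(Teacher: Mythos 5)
Your proposal is correct and follows essentially the same route as the paper: the paper likewise invokes the pointwise pairing computation preceding Proposition \ref{pro:inv-bia}, identifies $\mathfrak{s}(r)^{\sharp}=\frac{1}{2}\mathcal{I}$ (your $(r+\tau(r))^{\sharp}=\mathcal{I}$), and obtains the second identity from the first by self-adjointness $\mathcal{I}^{\ast}=\mathcal{I}$. Your sign bookkeeping for the adjoint step (the two minus signs from the convention $\langle\fl_{A}^{\ast}(a)(\xi),\,b\rangle=-\langle\xi,\,a\diamond b\rangle$ cancelling) is exactly right, and you merely spell out what the paper compresses into one sentence.
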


\begin{proof}
According to the analysis above Proposition \ref{pro:inv-bia}, it can be
concluded that the symmetric part $\mathfrak{s}(r)$ of $r$ is invariant if and only if
$$
\mathfrak{s}(r)^{\sharp}(\fl_{A}^{\ast}(a)(\xi))
=a\diamond\mathfrak{s}(r)^{\sharp}(\xi)+\mathfrak{s}(r)^{\sharp}(\xi)\diamond a,
$$
for all $a\in A,\xi\in A^{\ast}$. Since $\mathfrak{s}(r)^{\sharp}=\frac{1}{2}\mathcal{I}$,
we get that the symmetric part $\mathfrak{s}(r)$ is invariant if and only if
$\mathcal{I}\fl_{A}^{\ast}(a)=\fl_{A}(a)\mathcal{I}+\fr_{A}(a)\mathcal{I}$
for any $a\in A$. Moreover, since $\mathcal{I}^{\ast}=\mathcal{I}$, we have
$\mathcal{I}\fl_{A}^{\ast}(a)=\fl_{A}(a)\mathcal{I}+\fr_{A}(a)\mathcal{I}$ is equivalent
to $\fl_{A}(a)\mathcal{I}=\mathcal{I}\fl_{A}^{\ast}(a)+\mathcal{I}\fr_{A}^{\ast}(a)$.
The proof is finished.
\end{proof}

If the symmetric part $\mathfrak{s}(r)$ of $r\in A\otimes A$ is invariant,
we can also give a characterization for solution of the NYBE in a Novikov algebra.

\begin{pro}\label{pro:solu-equiv}
Let $(A, \diamond)$ be a Novikov algebra and $r\in A\otimes A$.
Assume that the symmetric part $\mathfrak{s}(r)$ of $r$ is invariant.
Then $r$ is a solution of the NYBE in $(A, \diamond)$ if and only if
$(A^{\ast}, \diamond_{r})$ is a Novikov algebra and the linear maps
$r^{\sharp}, r^{\natural}: (A^{\ast}, \diamond_{r})\rightarrow (A, \diamond)$
are Novikov algebra homomorphisms.
\end{pro}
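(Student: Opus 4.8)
The plan is to prove both directions by translating the tensor-level identity $\mathbf{N}_r = 0$ into statements about the maps $r^{\sharp}$ and $r^{\natural}$, using the invariance of $\mathfrak{s}(r)$ to control the difference $\mathcal{I}=r^{\sharp}-r^{\natural}$ as in Lemma~\ref{lem:syminva}. First I would record what it means for $r^{\sharp}$ and $r^{\natural}$ to be Novikov algebra homomorphisms from $(A^{\ast}, \diamond_r)$ to $(A, \diamond)$: writing out $r^{\sharp}(\xi \diamond_r \eta)$ and $r^{\natural}(\xi \diamond_r \eta)$ using the explicit formula $\xi \diamond_r \eta = (\fl_{A}^{\ast}+\fr_{A}^{\ast})(r^{\sharp}(\xi))(\eta) - \fr_{A}^{\ast}(r^{\natural}(\eta))(\xi)$ given just before the statement, and comparing with $r^{\sharp}(\xi)\diamond r^{\sharp}(\eta)$ and $r^{\natural}(\xi)\diamond r^{\natural}(\eta)$. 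The point is that each of these homomorphism conditions, after pairing against an arbitrary $\zeta \in A^{\ast}$ and using the definitions of $r^{\sharp}, r^{\natural}$ via the standard pairing, unwinds into a component of $\langle \zeta \otimes \xi \otimes \eta,\; \mathbf{N}_r\rangle$ (or a $\tau$-twist thereof).

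The key computation is therefore to pair the three tensor legs of $\mathbf{N}_r = r_{13}\diamond r_{23} + r_{12}\diamond r_{23} + r_{23}\diamond r_{12} + r_{13}\diamond r_{12}$ against $\zeta\otimes\xi\otimes\eta$ and recognize the result as a combination of the homomorphism defects. Concretely, using $\langle \eta, r^{\sharp}(\xi)\rangle = \langle \xi\otimes\eta, r\rangle$ and $\langle \xi, r^{\natural}(\eta)\rangle = -\langle \xi\otimes\eta, r\rangle$, the four summands of $\mathbf{N}_r$ should convert into expressions like $\langle \zeta, r^{\sharp}(\xi)\diamond r^{\sharp}(\eta)\rangle$, $\langle \zeta, r^{\natural}(\xi\diamond_r\eta)\rangle$, and mixed terms in which $\mathcal{I}=r^{\sharp}-r^{\natural}$ appears. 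Here invariance is essential: it lets me replace occurrences of $r^{\natural}$ by $r^{\sharp}$ modulo $\mathcal{I}$, and the relation $\fl_{A}(a)\mathcal{I}=\mathcal{I}\fl_{A}^{\ast}(a)+\mathcal{I}\fr_{A}^{\ast}(a)$ (equivalently $\mathcal{I}\fl_{A}^{\ast}(a) = (\fl_A+\fr_A)(a)\mathcal{I}$) is exactly what is needed to absorb the correction terms so that the two homomorphism conditions become simultaneously equivalent to $\mathbf{N}_r=0$ rather than to some weaker pair of identities.

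For the forward direction I would assume $\mathbf{N}_r = 0$; then by Proposition~\ref{pro:inv-bia} the invariance of $\mathfrak{s}(r)$ already guarantees that $(A, \diamond, \delta_r)$ is a Novikov bialgebra, so $(A^{\ast}, \diamond_r)$ is automatically a Novikov algebra, and it remains only to read off from the vanishing pairing that both $r^{\sharp}$ and $r^{\natural}$ are homomorphisms. For the converse I would assume both maps are homomorphisms and $(A^{\ast},\diamond_r)$ is a Novikov algebra, and run the same chain of pairings in reverse: since $\zeta, \xi, \eta$ are arbitrary, the vanishing of all homomorphism defects forces $\langle \zeta\otimes\xi\otimes\eta, \mathbf{N}_r\rangle = 0$ for all choices, hence $\mathbf{N}_r = 0$. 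The main obstacle I anticipate is the bookkeeping in the middle paragraph: tracking which of the four terms of $\mathbf{N}_r$ carries an $r^{\sharp}$ versus an $r^{\natural}$ on each leg, and making sure the invariance relation from Lemma~\ref{lem:syminva} is applied with the correct operator ($\fl_A^{\ast}$ versus $\fr_A^{\ast}$) so that the $\mathcal{I}$-correction terms cancel cleanly and the two homomorphism conditions genuinely collapse to the single equation $\mathbf{N}_r=0$ rather than to an over- or under-determined system.
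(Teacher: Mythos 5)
Your plan is essentially the paper's own proof: the paper likewise pairs the homomorphism defect $r^{\sharp}(\xi\diamond_{r}\eta)-r^{\sharp}(\xi)\diamond r^{\sharp}(\eta)$ against an arbitrary $\zeta\in A^{\ast}$, unwinds the definitions of $r^{\sharp}$ and $r^{\natural}$, and finds it equals $-\langle\xi\otimes\eta\otimes\zeta,\;\mathbf{N}_{r}\rangle$, so that $\mathbf{N}_{r}=0$ is equivalent to the homomorphism conditions (the $r^{\natural}$ case being ``similar'', which is exactly where the invariance of $\mathfrak{s}(r)$ and Lemma~\ref{lem:syminva} enter, as you anticipate). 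The converse is read off from the same identity since $\xi,\eta,\zeta$ are arbitrary, again just as you propose.
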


\begin{proof}
For any $\xi, \eta, \zeta\in A^{\ast}$, we have
\begin{align*}
&\;\langle r^{\sharp}(\xi\diamond_{r}\eta)
-r^{\sharp}(\xi)\diamond r^{\sharp}(\eta),\ \ \zeta\rangle\\
=&\;\langle r^{\sharp}((\fl_{A}^{\ast}+\fr_{A}^{\ast})(r^{\sharp}(\xi))(\eta))
-r^{\sharp}(\fr_{A}^{\ast}(r^{\natural}(\eta))(\xi)),\ \ \zeta\rangle
-\langle r^{\sharp}(\xi)\diamond r^{\sharp}(\eta),\ \ \zeta\rangle\\
=&\;\langle r^{\sharp}(\xi)\diamond r^{\natural}(\zeta),\ \ \eta\rangle
+\langle r^{\natural}(\zeta)\diamond r^{\sharp}(\xi),\ \ \eta\rangle
-\langle r^{\natural}(\zeta)\diamond r^{\natural}(\eta),\ \ \xi\rangle
-\langle r^{\sharp}(\xi)\diamond r^{\sharp}(\eta),\ \ \zeta\rangle\\
=&\;-\langle\xi\otimes\eta\otimes\zeta,\ \ \mathbf{N}_{r}\rangle.
\end{align*}
Thus, we get that $r^{\sharp}: (A^{\ast}, \diamond_{r})\rightarrow(A, \diamond)$
is Novikov algebra homomorphism if $r$ is a solution of the NYBE in $(A, \diamond)$.
The situation with $r^{\natural}$ is similar. Conversely, it is directly available
from the above calculation.
\end{proof}

\begin{defi}\label{def:fact}
Let $(A, \diamond)$ be a Novikov algebra, $r\in A\otimes A$, and $(A, \diamond, \delta_{r})$
be the quasi-triangular Novikov bialgebra associated with $r$. If
$\mathcal{I}=r^{\sharp}-r^{\natural}: A^{\ast}\rightarrow A$ is a linear isomorphism
of vector spaces, then $(A, \diamond, \delta_{r})$ is called a {\rm factorizable
Novikov bialgebra}.
\end{defi}

Obviously, the factorizable Novikov bialgebra is a subclass of quasi-triangular
Novikov bialgebra, and we can view the factorizable Novikov bialgebra
is the opposite of the triangular Novikov bialgebra, since $\mathcal{I}=0$
for triangular Novikov bialgebra. We consider the linear map $\mathcal{I}$ as a
composition of maps as follows:
$$
A^{\ast}\xrightarrow{\quad r^{\sharp}\oplus r^{\natural}\quad}
A\oplus A\xrightarrow{\quad(x,y)\mapsto x-y\quad} A.
$$
The following result justifies the terminology of a factorizable Novikov bialgebra.

\begin{pro}\label{pro:fact}
Let $(A, \diamond)$ be a Novikov algebra and $r\in A\otimes A$. Assume the Novikov bialgebra
$(A, \diamond, \delta_{r})$ is factorizable. Then $\Img(r^{\sharp}\oplus r^{\natural})$
is a Novikov subalgebra of the direct sum Novikov algebra $(A\oplus A, \ast)$, which is
isomorphism to the Novikov algebra $(A^{\ast}, \diamond_{r})$.
Moreover, any $a\in A$ has an unique decomposition $a=a_{+}+a_{-}$,
where $a_{+}\in\Img(r^{\sharp})$ and $a_{-}\in\Img(r^{\natural})$.
\end{pro}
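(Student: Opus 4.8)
The plan is to reduce everything to Proposition~\ref{pro:solu-equiv}, which already repackages the NYBE condition as the statement that $r^{\sharp}$ and $r^{\natural}$ are Novikov algebra homomorphisms. By Definition~\ref{def:fact} a factorizable Novikov bialgebra is in particular the quasi-triangular one associated with $r$, so $r$ solves the NYBE and its symmetric part $\mathfrak{s}(r)$ is invariant; hence the hypotheses of Proposition~\ref{pro:solu-equiv} are met, and from the outset I may assume that $(A^{\ast}, \diamond_{r})$ is a Novikov algebra, that both $r^{\sharp}, r^{\natural}\colon (A^{\ast}, \diamond_{r})\to (A, \diamond)$ are Novikov algebra homomorphisms, and that $\mathcal{I}=r^{\sharp}-r^{\natural}$ is a linear isomorphism.

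First I would verify that $r^{\sharp}\oplus r^{\natural}\colon (A^{\ast}, \diamond_{r})\to (A\oplus A, \ast)$, $\xi\mapsto (r^{\sharp}(\xi), r^{\natural}(\xi))$, is a Novikov algebra homomorphism. Since the product on $(A\oplus A, \ast)$ is componentwise, for $\xi, \eta\in A^{\ast}$ one has $(r^{\sharp}\oplus r^{\natural})(\xi\diamond_{r}\eta)=(r^{\sharp}(\xi)\diamond r^{\sharp}(\eta),\; r^{\natural}(\xi)\diamond r^{\natural}(\eta))$, using that each component is a homomorphism, and this equals $(r^{\sharp}(\xi), r^{\natural}(\xi))\ast(r^{\sharp}(\eta), r^{\natural}(\eta))$. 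Consequently $\Img(r^{\sharp}\oplus r^{\natural})$ is a Novikov subalgebra of $(A\oplus A, \ast)$, which settles the first assertion.

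Next I would establish the isomorphism with $(A^{\ast}, \diamond_{r})$. The map $r^{\sharp}\oplus r^{\natural}$ is surjective onto its image by construction, so it suffices to check injectivity. If $(r^{\sharp}\oplus r^{\natural})(\xi)=0$, then $r^{\sharp}(\xi)=r^{\natural}(\xi)=0$, whence $\mathcal{I}(\xi)=r^{\sharp}(\xi)-r^{\natural}(\xi)=0$; since $\mathcal{I}$ is an isomorphism this forces $\xi=0$, so $\Ker(r^{\sharp}\oplus r^{\natural})=0$. Therefore $r^{\sharp}\oplus r^{\natural}$ is a bijective Novikov algebra homomorphism onto $\Img(r^{\sharp}\oplus r^{\natural})$, i.e. an isomorphism of Novikov algebras.

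Finally, for the factorization I would use the invertibility of $\mathcal{I}$ directly: given $a\in A$, set $\xi:=\mathcal{I}^{-1}(a)$, so that $a=\mathcal{I}(\xi)=r^{\sharp}(\xi)-r^{\natural}(\xi)$, and then $a_{+}:=r^{\sharp}(\xi)\in\Img(r^{\sharp})$ and $a_{-}:=-r^{\natural}(\xi)\in\Img(r^{\natural})$ satisfy $a=a_{+}+a_{-}$. The step demanding the most care is the uniqueness: it should be read as uniqueness of the preimage $\xi$ under the isomorphism $\mathcal{I}$, equivalently of the pair $(r^{\sharp}(\xi), r^{\natural}(\xi))$ as an element of the subalgebra $\Img(r^{\sharp}\oplus r^{\natural})$. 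One should resist over-claiming an internal direct sum $A=\Img(r^{\sharp})\oplus\Img(r^{\natural})$, which can fail (for instance when $r^{\sharp}$ is already surjective); the honest content is that invertibility of $\mathcal{I}$ pins down $\xi$, hence $a_{+}$ and $a_{-}$, uniquely. Beyond this bookkeeping and keeping the sign conventions for $r^{\sharp}$, $r^{\natural}$, $\mathcal{I}$ consistent, there is no real computational obstacle, since Proposition~\ref{pro:solu-equiv} already carries the weight of the NYBE.
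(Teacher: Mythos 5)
Your proof is correct and follows essentially the same route as the paper: the homomorphism property of $r^{\sharp}$ and $r^{\natural}$ (via Proposition \ref{pro:solu-equiv}, which the paper invokes implicitly), injectivity of $r^{\sharp}\oplus r^{\natural}$ from the invertibility of $\mathcal{I}$, and the decomposition $a=r^{\sharp}(\mathcal{I}^{-1}(a))-r^{\natural}(\mathcal{I}^{-1}(a))$. Your caveat on uniqueness is well taken: the paper merely asserts that it "follows from the fact that $\mathcal{I}$ is an isomorphism," whereas the precise claim (as in the factorizable Lie bialgebra literature this result parallels) is uniqueness of the pair $(a_{+},-a_{-})$ as an element of $\Img(r^{\sharp}\oplus r^{\natural})$ — not uniqueness of arbitrary decompositions with $a_{\pm}$ separately in the two images, which can indeed fail — and that is exactly the reading you give.
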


\begin{proof}
Since $(A, \diamond, \delta_{r})$ is a quasi-triangular Novikov bialgebra,
both $r^{\sharp}$ and $r^{\natural}$ are Novikov algebra homomorphisms.
Therefore, $\Img(r^{\sharp}\oplus r^{\natural})$ is a Novikov subalgebra of the
direct sum Novikov algebra $(A\oplus A, \ast)$. Since $\mathcal{I}: A^{\ast}\rightarrow A$
is a linear isomorphism, it follows that $r^{\sharp}\oplus r^{\natural}$ is injective,
then the Novikov algebra $\Img(r^{\sharp}\oplus r^{\natural})$ is isomorphic
to the Novikov algebra $(A^{\ast}, \diamond_{r})$.
Moreover, since $\mathcal{I}$ is an isomorphism, for any $a\in A$, we have
$$
a=(r^{\sharp}-r^{\natural})(\mathcal{I}^{-1}(a))
=r^{\sharp}(\mathcal{I}^{-1}(a))-r^{\natural}(\mathcal{I}^{-1}(a)),
$$
which implies that $a=a_{+}+a_{-}$, where $a_{+}=r^{\sharp}(\mathcal{I}^{-1}(a))$ and
$a_{-}=-r^{\natural}(\mathcal{I}^{-1}(a))$. The uniqueness also follows from
the fact that $\mathcal{I}$ is an isomorphism.
\end{proof}

In Proposition \ref{pro:double}, we construct a quasi-triangular Novikov bialgebra
from any Novikov bialgebra. Here, we show that the quasi-triangular Novikov bialgebra
constructed in Proposition \ref{pro:double} is factorizable.

\begin{pro}\label{pro:doufact}
Let $(A, \diamond, \delta)$ be a Novikov bialgebra. Using the notation from
Proposition \ref{pro:double}, we get that the Novikov double $
(A\oplus A^{\ast}, \ast, \delta_{\tilde{r}})$ is factorizable.
\end{pro}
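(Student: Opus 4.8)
The goal is to prove that the Novikov double $(A\oplus A^{\ast}, \ast, \delta_{\tilde{r}})$ constructed in Proposition \ref{pro:double} is factorizable. By Definition \ref{def:fact}, since we already know from Proposition \ref{pro:double} that this is a quasi-triangular Novikov bialgebra associated with $\tilde{r}$, it remains only to verify that the operator $\mathcal{I}=\tilde{r}^{\sharp}-\tilde{r}^{\natural}:(A\oplus A^{\ast})^{\ast}\rightarrow A\oplus A^{\ast}$ is a linear isomorphism of vector spaces. So the plan is to compute $\tilde{r}^{\sharp}$ and $\tilde{r}^{\natural}$ explicitly on the dual space $(A\oplus A^{\ast})^{\ast}\cong A^{\ast}\oplus A$, form their difference $\mathcal{I}$, and show that the resulting map has trivial kernel (equivalently, is surjective), which in finite dimensions gives bijectivity.

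First I would fix the identification of the dual space. With $\tilde{r}=\sum_{i=1}^{n} e_{i}\otimes f_{i}\in A\otimes A^{\ast}\subset (A\oplus A^{\ast})\otimes(A\oplus A^{\ast})$, a general element of $(A\oplus A^{\ast})^{\ast}$ can be written as $(\xi,x)$ with $\xi\in A^{\ast}$ and $x\in A\cong (A^{\ast})^{\ast}$, paired against $(a,\eta)\in A\oplus A^{\ast}$ by $\langle(\xi,x),(a,\eta)\rangle=\langle\xi,a\rangle+\langle\eta,x\rangle$. Using the defining relations $\langle\tilde{r}^{\sharp}(\zeta_1),\zeta_2\rangle=\langle\zeta_1\otimes\zeta_2,\tilde{r}\rangle$ and $\langle\zeta_1,\tilde{r}^{\natural}(\zeta_2)\rangle=-\langle\zeta_1\otimes\zeta_2,\tilde{r}\rangle$, I would compute the action of both maps on $(\xi,x)$. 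The key point is that $\tilde{r}$ corresponds to the identity map $\id:A\to A$, so $\tilde{r}^{\sharp}$ essentially embeds $A^{\ast}$ into $A\oplus A^{\ast}$ via the $A$-component while $\tilde{r}^{\natural}$ embeds the $A$-component (through $(A^{\ast})^{\ast}$) into the $A^{\ast}$-component, with a sign.

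I expect the explicit computation to yield something like $\tilde{r}^{\sharp}(\xi,x)=(x,0)$ or $(0,\xi)$ (up to the precise identifications and signs coming from the pairing conventions), and $\tilde{r}^{\natural}(\xi,x)$ supported on the complementary summand. Taking the difference, $\mathcal{I}(\xi,x)=\tilde{r}^{\sharp}(\xi,x)-\tilde{r}^{\natural}(\xi,x)$ should come out to be, up to sign, the tautological identification $(\xi,x)\mapsto (x,\xi)\in A\oplus A^{\ast}$, which is manifestly a linear isomorphism since it merely swaps the two summands (each paired with its dual). Concretely, I would verify that $\mathcal{I}$ sends $(\xi,x)$ to an element whose $A$-component recovers $x$ and whose $A^{\ast}$-component recovers $\xi$; injectivity is then immediate because $\mathcal{I}(\xi,x)=0$ forces both $x=0$ and $\xi=0$.

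The main obstacle — really the only subtle point — is bookkeeping the dualization conventions correctly: one must be careful about the sign in the definition of $r^{\natural}$, the identification of $(A^{\ast})^{\ast}$ with $A$, and the fact that $\tilde{r}\in A\otimes A^{\ast}$ lives asymmetrically in $(A\oplus A^{\ast})\otimes(A\oplus A^{\ast})$ so that $\tilde{r}^{\sharp}$ and $\tilde{r}^{\natural}$ land in different summands. Once these identifications are pinned down, the computation that $\mathcal{I}$ is (up to sign) the canonical swap isomorphism between $A^{\ast}\oplus A$ and $A\oplus A^{\ast}$ is routine, and bijectivity follows. Hence, by Definition \ref{def:fact}, the Novikov double is factorizable.
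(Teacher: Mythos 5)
Your proposal follows the paper's proof exactly: the paper likewise invokes Proposition \ref{pro:double} for quasi-triangularity and then computes $\tilde{r}^{\sharp}(\xi,a)=(0,\xi)$ and $\tilde{r}^{\natural}(\xi,a)=(-a,0)$ on $(A\oplus A^{\ast})^{\ast}=A^{\ast}\oplus A$, so that $\mathcal{I}(\xi,a)=(a,\xi)$ is the swap isomorphism, confirming the form you anticipated. The only difference is that you leave the (routine) pairing computation as a plan rather than writing it out; carrying it through settles the sign bookkeeping you flag and completes the argument.
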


\begin{proof}
By Proposition \ref{pro:double}, the Novikov bialgebra $(A\oplus A^{\ast},
\ast, \delta_{\tilde{r}})$ is quasi-triangular. Note that $\tilde{r}^{\sharp},
\tilde{r}^{\natural}: (A\oplus A^{\ast})^{\ast}=A^{\ast}\oplus A\rightarrow
A\oplus A^{\ast}$ are given by
\begin{align*}
\tilde{r}^{\sharp}(\xi,\; a)=(0,\; \xi),\qquad and\qquad
\tilde{r}^{\natural}(\xi,\; a)=(-a,\; 0),
\end{align*}
for any $a\in A$ and $\xi\in A^{\ast}$. This implies that $\mathcal{I}(\xi, a)=(a, \xi)$.
Thus $\mathcal{I}$ is a linear isomorphism, and so that, $(A\oplus A^{\ast}, \ast,
\delta_{\tilde{r}})$ is a factorizable Novikov bialgebra.
\end{proof}

\begin{ex}\label{ex:fact-novbia}
Consider the 2-dimensional Novikov bialgebra $(A, \cdot, \Delta)$ given
by Example \ref{ex:2bialg}, i.e., $e_{1}\diamond e_{1}=e_{2}$ and
$\Delta(e_{1})=e_{2}\otimes e_{2}$. Denote $\{f_{1}, f_{2}\}$ be the dual basis
of $\{e_{1}, e_{2}\}$ and $\tilde{r}=e_{1}\otimes f_{1}+e_{2}\otimes f_2\in
A\otimes A^{\ast}\subset(A\oplus A^{\ast})\otimes(A\oplus A^{\ast})$.
Then, by Proposition \ref{pro:double}, we get a quasi-triangular Novikov bialgebra
$(A\oplus A^{\ast}, \ast, \delta_{\tilde{r}})$, where
\begin{align*}
e_{1}\ast e_{1}=e_{2},\qquad &f_{2}\ast f_{2}=f_{1},\qquad e_{1}\ast f_{2}
=-2f_{1}+e_{2},\qquad f_2\ast e_{1}=-2e_{2}+f_1,\\
&\delta_{\tilde{r}}(e_{1})=e_{2}\otimes e_{2},\qquad
\delta_{\tilde{r}}(f_{2})=-f_{1}\otimes f_{1},
\end{align*}
since $\tilde{r}+\tau(\tilde{r})=e_{1}\otimes f_{1}+e_{2}\otimes f_{2}+f_{1}\otimes e_{1}
+f_{2}\otimes e_{2}$ is invariant. By direct calculation,we get that the linear map
$\mathcal{I}: (A\oplus A^{\ast})^{\ast} \rightarrow(A\oplus A^{\ast})$ is given by
\begin{align*}
\mathcal{I}(e_{1}^{\ast})=f_{1},\qquad \mathcal{I}(e_{2}^{\ast})=f_{2},\qquad
\mathcal{I}(f_{1}^{\ast})=e_{1},\qquad\mathcal{I}(f_{2}^{\ast})=e_{2},
\end{align*}
where $\{e_{1}^{\ast}, e_{2}^{\ast}, f_{1}^{\ast}, f_{2}^{\ast}\}$ in
$(A\oplus A^{\ast})^{\ast}$ is a dual basis of $\{e_{1}, e_{2}, f_{1}, f_{2}\}$ in
$A\oplus A^{\ast}$. Hence, we get $\mathcal{I}$ is a linear isomorphism, and so that
$(A\oplus A^{\ast}, \ast, \delta_{\tilde{r}})$ is a factorizable Novikov bialgebra.
\end{ex}

Next, we will give a characterization of factorizable Novikov bialgebras by the
Rota-Baxter operator on quadratic Novikov algebras. Recall that a {\it quadratic
Novikov algebra} $(A, \diamond, \mathfrak{B})$ is a Novikov algebra
$(A, \diamond)$ equipped with a nondegenerate symmetric invariant bilinear form
$\mathfrak{B}(-,-)$. Let $(A, \diamond)$ be a Novikov algebra and $\lambda\in\Bbbk$.
A linear map $P: A\rightarrow A$ is called a {\it Rota-Baxter operator} of weight
$\lambda$ on $(A, \diamond)$ if
$$
P(a_{1})\diamond P(a_{2})=P(P(a_{1})\diamond a_{2}+a_{1}\diamond P(a_{2})
+\lambda a_{1}\diamond a_{2}),
$$
for any $a_{1}, a_{2}\in A$. In this case, we called $(A, \diamond, P)$ is a
Rota-Baxter Novikov algebra of weight $\lambda$.
For any Rota-Baxter Novikov algebra $(A, \diamond, P)$ of weight $\lambda$, there is
a new multiplication $\diamond_{P}$ on $A$ defined by
$$
a_{1}\diamond_{P}a_{2}=P(a_{1})\diamond a_{2}+a_{1}\diamond P(a_{2})
+\lambda a_{1}\diamond a_{2},
$$
for any $a_{1}, a_{2}\in A$. The Novikov algebra $(A, \diamond_{P})$ is called the
{\it descendent Novikov algebra} and denoted by $A_{P}$. Furthermore, $P$ is a Novikov
algebra homomorphism from the Novikov algebra $(A, \diamond_{P})$ to $(A, \diamond)$.

\begin{defi}\label{def:quadr}
Let $(A, \diamond, P)$ be a Rota-Baxter Novikov algebra of weight $\lambda$, and
$\mathfrak{B}(-,-)$ be a nondegenerate symmetric bilinear form on $(A, \diamond)$.
The triple $(A, P, \mathfrak{B})$ is called a quadratic Rota-Baxter Novikov algebra
of weight $\lambda$ if $(A, \diamond, \mathfrak{B})$ is a quadratic Novikov algebra
and the following compatibility condition holds:
\begin{align}
\mathfrak{B}(P(a_{1}),\; a_{2})+\mathfrak{B}(a_{1},\; P(a_{2}))
+\lambda\mathfrak{B}(a_{1}, a_{2})=0,                \label{quaRB}
\end{align}
for any $a_{1}, a_{2}\in A$.
\end{defi}

\begin{thm}\label{thm:quafact}
Let $(A, \diamond)$ be a Novikov algebra and $r\in A\otimes A$. Suppose the Novikov
bialgebra $(A, \diamond, \delta_{r})$ is factorizable and $\mathcal{I}=r^{\sharp}
-r^{\natural}: A^{\ast}\rightarrow A$.
We define a bilinear form $\mathfrak{B}_{\mathcal{I}}(-,-)$ by $\mathfrak{B}_{\mathcal{I}}
(a_{1}, a_{2})=\langle\mathcal{I}^{-1}(a_{1}),\; a_{2}\rangle$, for any $a_{1}, a_{2}\in A$.
Then $(A, \diamond, \mathfrak{B}_{\mathcal{I}})$ is a quadratic Novikov algebra.
Moreover, the linear map $P=\lambda r^{\natural}\mathcal{I}^{-1}: A\rightarrow A$ is a
Rota-Baxter operator of weight $\lambda$ on $(A, \diamond, \mathfrak{B}_{\mathcal{I}})$.
	
Conversely, for any quadratic Rota-Baxter Novikov algebra $(A, P, \mathfrak{B})$
of weight $\lambda$, we have a linear isomorphism $\mathcal{I}_{\mathfrak{B}}:
A^{\ast}\rightarrow A$ by $\langle\mathcal{I}^{-1}_{\mathfrak{B}}(a_{1}),\; a_{2}\rangle
:=\mathfrak{B}(a_{1}, a_{2})$, for any $a_{1}, a_{2}\in A$.
If $\lambda\neq0$, we define
$$
r^{\sharp}:=\mbox{$\frac{1}{\lambda}$}(P+\lambda\id)
\mathcal{I}_{\mathfrak{B}}:\quad A^{\ast}\longrightarrow A,
$$
and define $r\in A\otimes A$ by $\langle r^{\sharp}(\xi),\; \eta\rangle
=\langle r,\; \xi\otimes\eta\rangle$, Then, $r$ is a solution of the NYBE
in $(A, \diamond)$ and induces a factorizable Novikov bialgebra
$(A, \diamond, \delta_{r})$, where $\delta_{r}$ is given by Eq. \eqref{cobnov}.
\end{thm}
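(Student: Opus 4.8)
The plan is to prove the two directions separately, since Theorem \ref{thm:quafact} is a biconditional correspondence. For the forward direction, I would start from the factorizable Novikov bialgebra $(A, \diamond, \delta_{r})$ and verify that $\mathfrak{B}_{\mathcal{I}}(-,-)$ is a nondegenerate symmetric invariant bilinear form, then check the Rota-Baxter identity for $P=\lambda r^{\natural}\mathcal{I}^{-1}$. Nondegeneracy is immediate because $\mathcal{I}$ is a linear isomorphism. Symmetry follows from $\mathcal{I}^{\ast}=\mathcal{I}$ established just before Lemma \ref{lem:syminva}: indeed $\mathfrak{B}_{\mathcal{I}}(a_{1},a_{2})=\langle\mathcal{I}^{-1}(a_{1}),a_{2}\rangle=\langle a_{1},(\mathcal{I}^{-1})^{\ast}(a_{2})\rangle=\langle\mathcal{I}^{-1}(a_{2}),a_{1}\rangle$ once one notes $(\mathcal{I}^{-1})^{\ast}=(\mathcal{I}^{\ast})^{-1}=\mathcal{I}^{-1}$. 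For invariance of $\mathfrak{B}_{\mathcal{I}}$, I would translate the invariance condition into the operator identity from Lemma \ref{lem:syminva}, namely $\mathcal{I}\fl_{A}^{\ast}(a)=\fl_{A}(a)\mathcal{I}+\fr_{A}(a)\mathcal{I}$ (equivalently $\fl_{A}(a)\mathcal{I}=\mathcal{I}\fl_{A}^{\ast}(a)+\mathcal{I}\fr_{A}^{\ast}(a)$), and unwind the defining relation of invariance $\mathfrak{B}_{\mathcal{I}}(a_{1}\diamond a_{2},a_{3})+\mathfrak{B}_{\mathcal{I}}(a_{2},a_{1}\diamond a_{3}+a_{3}\diamond a_{1})=0$ using $\mathcal{I}^{-1}$ and the adjoint-transpose relations for $\fl_{A}^{\ast},\fr_{A}^{\ast}$.

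For the Rota-Baxter property of $P=\lambda r^{\natural}\mathcal{I}^{-1}$, the key structural input is Proposition \ref{pro:solu-equiv}: since $r$ is a solution of NYBE with $\mathfrak{s}(r)$ invariant, both $r^{\sharp}$ and $r^{\natural}$ are Novikov algebra homomorphisms from $(A^{\ast},\diamond_{r})$ to $(A,\diamond)$. I would rewrite $P$ in terms of these homomorphisms and use the relation $r^{\sharp}=r^{\natural}+\mathcal{I}$ (so $r^{\sharp}\mathcal{I}^{-1}=r^{\natural}\mathcal{I}^{-1}+\id$, giving $P+\lambda\id=\lambda r^{\sharp}\mathcal{I}^{-1}$). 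The strategy is then to compute $P(a_{1})\diamond P(a_{2})=\lambda^{2}r^{\natural}\mathcal{I}^{-1}(a_{1})\diamond r^{\natural}\mathcal{I}^{-1}(a_{2})$, apply the homomorphism property of $r^{\natural}$ to pull $\diamond$ inside as $\diamond_{r}$, and then re-express everything back through $P$ and $P+\lambda\id$; the homomorphism identities for both $r^{\sharp}$ and $r^{\natural}$ acting on $\mathcal{I}^{-1}(a_{1})\diamond_{r}\mathcal{I}^{-1}(a_{2})$ should recombine into $P(P(a_{1})\diamond a_{2}+a_{1}\diamond P(a_{2})+\lambda a_{1}\diamond a_{2})$. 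The compatibility condition \eqref{quaRB} relating $P$ and $\mathfrak{B}_{\mathcal{I}}$ I expect to reduce, via $P+(P+\lambda\id)^{\ast}$-type manipulations and $\mathcal{I}^{\ast}=\mathcal{I}$, to the symmetry already established.

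For the converse, I would start from a quadratic Rota-Baxter Novikov algebra $(A,P,\mathfrak{B})$ of weight $\lambda\neq0$, define $\mathcal{I}_{\mathfrak{B}}$ and hence $r^{\sharp}=\frac{1}{\lambda}(P+\lambda\id)\mathcal{I}_{\mathfrak{B}}$, and reconstruct $r$. The goal is to show $r$ solves the NYBE and that $\mathfrak{s}(r)$ is invariant, so that $(A,\diamond,\delta_{r})$ is quasi-triangular by Proposition \ref{pro:inv-bia}, with $\mathcal{I}=r^{\sharp}-r^{\natural}$ recovered as the original (invertible) $\mathcal{I}_{\mathfrak{B}}$, giving factorizability. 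I would first extract the formula for $r^{\natural}$ from $r^{\sharp}$ using $\mathcal{I}^{\ast}=\mathcal{I}$ together with the compatibility \eqref{quaRB}, which forces $r^{\natural}=\frac{1}{\lambda}P\,\mathcal{I}_{\mathfrak{B}}$ so that $\mathcal{I}=r^{\sharp}-r^{\natural}=\mathcal{I}_{\mathfrak{B}}$ is an isomorphism; simultaneously this shows the symmetric part of $r$ is invariant (translating the invariance of $\mathfrak{B}$ back through Lemma \ref{lem:syminva}). Then, by the equivalence in Proposition \ref{pro:solu-equiv}, to establish the NYBE it suffices to verify that $r^{\sharp}$ and $r^{\natural}$ are Novikov algebra homomorphisms, which I would deduce directly from the Rota-Baxter identity for $P$ after transporting the multiplication $\diamond_{r}$ on $A^{\ast}$ to $A$ via $\mathcal{I}_{\mathfrak{B}}$.

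The main obstacle I anticipate is the Rota-Baxter verification in the forward direction: matching $P(a_{1})\diamond P(a_{2})$ against $P(P(a_{1})\diamond a_{2}+a_{1}\diamond P(a_{2})+\lambda a_{1}\diamond a_{2})$ requires carefully combining the two homomorphism identities (for $r^{\sharp}$ and $r^{\natural}$) with the relation $r^{\sharp}-r^{\natural}=\mathcal{I}$, and the bookkeeping of which operator acts on which factor is where sign and weight-$\lambda$ errors are most likely to creep in. The converse hinges on correctly pinning down $r^{\natural}$ from $r^{\sharp}$ via \eqref{quaRB}; once that is done the homomorphism checks are a reversal of the forward computation.
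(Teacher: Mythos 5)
Your proposal is correct and follows essentially the same route as the paper's own proof: symmetry and nondegeneracy of $\mathfrak{B}_{\mathcal{I}}$ from $\mathcal{I}^{\ast}=\mathcal{I}$ and invertibility, invariance via Lemma \ref{lem:syminva}, the Rota-Baxter identity via the homomorphism property of $r^{\sharp}$ and $r^{\natural}$ from Proposition \ref{pro:solu-equiv} combined with $r^{\sharp}=r^{\natural}+\mathcal{I}$, and the converse by recovering $r^{\natural}=\frac{1}{\lambda}P\mathcal{I}_{\mathfrak{B}}$ from Eq. \eqref{quaRB}, deducing invariance of $\mathfrak{s}(r)$ through Lemma \ref{lem:syminva}, and transporting $\diamond_{r}$ to the descendent product $\diamond_{P}$ before applying Proposition \ref{pro:solu-equiv}. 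The only cosmetic difference is that you run the Rota-Baxter verification starting from $P(a_{1})\diamond P(a_{2})$ and work back to $P\big(P(a_{1})\diamond a_{2}+a_{1}\diamond P(a_{2})+\lambda a_{1}\diamond a_{2}\big)$, whereas the paper computes in the opposite direction via its identity \eqref{I1}; the two computations are the same up to reordering.
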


\begin{proof}
Suppose the Novikov bialgebra $(A, \diamond, \delta_{r})$ is factorizable,
then, $r^{\sharp}, r^{\natural}: (A_{r}^{\ast}, \diamond_{r})\rightarrow(A, \diamond)$
are both Novikov algebra homomorphisms, for all $a_{1}, a_{2}, a_{3}\in A$. Thus, we have
\begin{equation}
\begin{split}
\mathcal{I}(\mathcal{I}^{-1}(a_{1})\diamond_{r}\mathcal{I}^{-1}(a_{2}))
&=(r^{\sharp}-r^{\natural})(\mathcal{I}^{-1}(a_{1})\diamond_{r}\mathcal{I}^{-1}(a_{2}))\\
&=(\mathcal{I}+r^{\natural})(\mathcal{I}^{-1}(a_{1}))\diamond(\mathcal{I}+r^{\natural})
(\mathcal{I}^{-1}(a_{2}))-r^{\natural}(\mathcal{I}^{-1}(a_{1}))\diamond
r^{\natural}(\mathcal{I}^{-1}(a_{2}))\\
&=r^{\natural}(\mathcal{I}^{-1}(a_{1}))\diamond a_{2}
+a_{1}\diamond r^{\natural}(\mathcal{I}^{-1}(a_{2}))+a_{1}\diamond a_{2}.   \label{I1}
\end{split}
\end{equation}
Therefore,
\begin{align*}
P(P(a_{1})\diamond a_{2}+a_{1}\diamond P(a_{2})+\lambda a_{1}\diamond a_{2})
&=\lambda^{2}r^{\natural}\Big(\mathcal{I}^{-1}\Big(r^{\natural}(\mathcal{I}^{-1}(a_{1}))
\diamond a_{2}+a_{1}\diamond r^{\natural}(\mathcal{I}^{-1}(a_{2}))
+a_{1}\diamond a_{2}\Big)\Big)\\
&=\lambda^{2}r^{\natural}\big(\mathcal{I}^{-1}(a_{1})\diamond_{r}\mathcal{I}^{-1}(a_{2})\big)\\
&=\lambda^2\big(r^{\natural}(\mathcal{I}^{-1}(a_{1}))\diamond
r^{\natural}(\mathcal{I}^{-1}(a_{2}))\big)\\
&=P(a_{1})\diamond P(a_{2}),
\end{align*}
which implies that $P$ is a Rota-Baxter operator of weight $\lambda$ on $(A, \diamond)$.
	
Next, we prove that $(A, \diamond_{P}, \mathfrak{B}_{\mathcal{I}})$ is a quadratic
Novikov algebra. Since $\mathcal{I}^{\ast}=\mathcal{I}$ if we identify $(A^{\ast})^{\ast}$
with $A$, we get $\mathfrak{B}_{\mathcal{I}}(a_{1}, a_{2})=\langle\mathcal{I}^{-1}
(a_{1}),\; a_{2}\rangle=\langle a_{1},\; \mathcal{I}^{-1}(a_{2})\rangle=
\mathfrak{B}_{\mathcal{I}}(a_{2}, a_{1})$. That is, $\mathfrak{B}_\mathcal{I}$ is symmetric.
Moreover, since the symmetric part $\mathfrak{s}(r)$ of $r$ is invariant, by Lemma
\ref{lem:syminva}, we have $\mathcal{I}\fl_{A}^{\ast}(a)=\fl_{A}(a)\mathcal{I}
+\fr_{A}(a)\mathcal{I}$ for $a\in A$. Thus, for any $a_{1}, a_{2}\in A$,
\begin{align*}
\mathfrak{B}_{\mathcal{I}}(a_{1}\diamond a_{2},\; a_{3})
+\mathfrak{B}_{\mathcal{I}}(a_{2},\; a_{1}\diamond a_{3}+a_{3}\diamond a_{1})
&=\langle\mathcal{I}^{-1}(a_{1}\diamond a_{2}),\; a_{3}\rangle
+\langle\mathcal{I}^{-1}(a_{2}),\; a_{1}\diamond a_{3}+a_{3}\diamond a_{1}\rangle\\
&=\langle\mathcal{I}^{-1}(\fl_{A}(a_{1}))(a_{2}),\; a_{3}\rangle
+\langle \mathcal{I}^{-1}(a_{2}),\; (\fl_{A}+\fr_{A})(a_{1})(a_{3})\rangle\\
&=\langle \mathcal{I}^{-1}(\fl_{A}(a_{1})(a_{2}))-(\fl_{A}^{\ast}+\fr_{A}^{\ast})(a_{1})
(\mathcal{I}^{-1}(a_{2})),\; a_{3}\rangle\\
&=0,
\end{align*}
which implies that $\mathfrak{B}_{\mathcal{I}}$ is a nondegenerate invariant bilinear
form on $(A, \diamond)$.

Finally, since $(r^{\natural})^{\ast}=-r^{\sharp}$ and
$\mathcal{I}=r^{\sharp}-r^{\natural}$, we have
\begin{align*}
&\;\mathfrak{B}_{\mathcal{I}}(a_{1},\; P(a_{2}))+\mathfrak{B}_{\mathcal{I}}(P(a_{1}),\; a_{2})
+\lambda\mathfrak{B}_{\mathcal{I}}(a_{1}, a_{2})\\
=&\;\lambda\Big(\langle\mathcal{I}^{-1}(a_{1}),\; r^{\natural}(\mathcal{I}^{-1}(a_{2}))\rangle
+\langle\mathcal{I}^{-1}(r^{\natural}(\mathcal{I}^{-1}(a_{1}))),\; a_{2}\rangle
+\langle\mathcal{I}^{-1}(a_{1}),\; a_{2}\rangle\Big)\\
%=&\;\lambda\Big(\langle\mathcal{I}^{-1}((r^{\natural})^{\ast}( \mathcal{I}^{-1}(a_{1}))),\;
%a_{2}\rangle
%+\langle\mathcal{I}^{-1}(r^{\natural}(\mathcal{I}^{-1}(a_{1}))),\; a_{2}\rangle
%+\langle\mathcal{I}^{-1}(a_{1}),\; a_{2}\rangle\Big)\\
=&\;\lambda\Big(-\langle \mathcal{I}^{-1}(r^{\sharp}(\mathcal{I}^{-1}(a_{1}))),\; a_{2}\rangle
+\langle\mathcal{I}^{-1}(r^{\natural}(\mathcal{I}^{-1}(a_{1}))),\; a_{2}\rangle
+\langle\mathcal{I}^{-1}(a_{1}),\; a_{2}\rangle\Big)\\
=&\;\lambda\Big(\langle \mathcal{I}^{-1}((r^{\natural}-r^{\sharp})(\mathcal{I}^{-1}(a_{1})))
+\mathcal{I}^{-1}(a_{1}),\; a_{2}\rangle\Big)\\
=&\;0,
\end{align*}
which implies that Eq. \eqref{quaRB} holds. Therefore, $P$ is a Rota-Baxter operator of
weight $\lambda$ on the quadratic Novikov algebra $(A, \diamond, \mathfrak{B}_{\mathcal{I}})$.

Conversely, since $\mathfrak{B}(-,-)$ is symmetric, we have
$\mathcal{I}_{\mathfrak{B}}^{\ast}=\mathcal{I}_{\mathfrak{B}}$. By the fact that
$\mathfrak{B}(a_{1},\; P(a_{2}))+\mathfrak{B}(P(a_{1}),\; a_{2})
+\lambda \mathfrak{B}(a_{1},\; a_{2})=0$, we get
$\langle\mathcal{I}_{\mathfrak{B}}^{-1}(a_{1}),\; P(a_{2})\rangle
+\langle\mathcal{I}_{\mathfrak{B}}^{-1}(P(a_{1})),\; a_{2}\rangle
+\lambda\langle\mathcal{I}_{\mathfrak{B}}^{-1}(a_{1}),\; a_{2}\rangle=0$,
for any $a_{1}, a_{2}\in A$. That is,
$P^{\ast}\mathcal{I}_{\mathfrak{B}}^{-1}+\mathcal{I}_{\mathfrak{B}}^{-1}P
+\lambda \mathcal{I}_{\mathfrak{B}}^{-1}=0$, and
$\mathcal{I}_{\mathfrak{B}}P^{\ast}+P\mathcal{I}_{\mathfrak{B}}
+\lambda\mathcal{I}_{\mathfrak{B}}=0$. Therefore,
$$
r^{\natural}=-(r^{\sharp})^{\ast}
=-\mbox{$\frac{1}{\lambda}$}\mathcal{I}_{\mathfrak{B}}^{\ast}(P^{\ast}+\lambda\id)
=\mbox{$\frac{1}{\lambda}$}P\mathcal{I}_{\mathfrak{B}},
$$
and $\mathcal{I}_{\mathfrak{B}}=r^{\sharp}-r^{\natural}$. Define a multiplication
$\diamond_{r}$ on $A^{\ast}$ by
$$
\xi\diamond_{r}\eta:=(\fl_{A}^{\ast}+\fr_{A}^{\ast})(r^{\sharp}(\xi))(\eta)
-\fr_{A}^{\ast}(r^{\natural}(\eta))(\xi),
$$
for any $\xi, \eta\in A^{\ast}$. Next we prove that the following equation holds:
\begin{align}
\mbox{$\frac{1}{\lambda}$}\mathcal{I}_\mathfrak{B}(\xi\diamond_{r} \eta)
&=(\mbox{$\frac{1}{\lambda}$}\mathcal{I}_{\mathfrak{B}}(\xi))\diamond_{P}
(\mbox{$\frac{1}{\lambda}$}\mathcal{I}_{\mathfrak{B}}(\eta)),     \label{rhomo}
\end{align}
where $\diamond_{P}$ is the product in the descendent Novikov algebra $A_{P}$.
By the fact that $\mathfrak{B}(a_{1}\diamond a_{2},\; a_{3})
+\mathfrak{B}(a_{2},\; a_{1}\diamond a_{3}+a_{3}\diamond a_{1})=0$, we get
$\langle\mathcal{I}^{-1}_{\mathfrak{B}}(\fl_{A}(a_{1})(a_{2})),\; a_{3}\rangle
-\langle(\fl_{A}^{\ast}+\fr_{A}^{\ast})(a_{1})(\mathcal{I}^{-1}_{\mathfrak{B}}(a_{2})),\;
a_{3}\rangle=0$, for any $a_{1}, a_{2}, a_{3}\in A$. That is,
$\mathcal{I}_{\mathfrak{B}}\fl_{A}^{\ast}(a_{1})=(\fl_{A}+\fr_{A})(a_{1})
\mathcal{I}_{\mathfrak{B}}$. Therefore, by Lemma \ref{lem:syminva},
the symmetric part $\mathfrak{s}(r)$ of $r$ is invariant.

Moreover, note that
\begin{align*}
\mathcal{I}_{\mathfrak{B}}(\xi\diamond_{r}\eta)
=&\;\mathcal{I}_{\mathfrak{B}}\big((\fl_{A}^{\ast}+\fr_{A}^{\ast})(r^{\sharp}(\xi))(\eta)
-\fr_{A}^{\ast}(r^{\natural}(\eta))(\xi)\big)\\
=&\;\fl_{A}(r^{\sharp}(\xi))(\mathcal{I}_{\mathfrak{B}}(\eta))
-\fl_{A}(r^{\natural}(\eta))(\mathcal{I}_{\mathfrak{B}}(\xi))
+\mathcal{I}_{\mathfrak{B}}(\fl_{A}^{\ast}(r^{\natural}(\eta))(\xi))\\
=&\;r^{\sharp}(\xi)\diamond(r^{\sharp}-r^{\natural})(\eta)
-r^{\natural}(\eta)\diamond(r^{\sharp}-r^{\natural})(\xi)
+r^{\natural}(\eta)\diamond(r^{\sharp}-r^{\natural})(\xi)
+(r^{\sharp}-r^{\natural})(\xi)\diamond r^{\natural}(\eta)\\
=&\;r^{\sharp}(\xi)\diamond r^{\sharp}(\eta)-r^{\natural}(\xi)\diamond r^{\natural}(\eta),
\end{align*}
and
\begin{align*}
\mathcal{I}_{\mathfrak{B}}(\xi)\diamond_{P}(\mathcal{I}_\mathfrak{B}(\eta))
=&\; P\mathcal{I}_{\mathfrak{B}}(\xi)\diamond\mathcal{I}_{\mathfrak{B}}(\eta)
+\mathcal{I}_{\mathfrak{B}}(\xi)\diamond P(\mathcal{I}_{\mathfrak{B}}(\eta))
+\lambda\mathcal{I}_{\mathfrak{B}}(\xi)\diamond \mathcal{I}_{\mathfrak{B}}(\eta)\\
=&\;\lambda r^{\natural}(\xi)\diamond(r^{\sharp}-r^{\natural})(\eta)
+\lambda(r^{\sharp}-r^{\natural})(\xi)\diamond r^{\natural}(\eta)
+\lambda(r^{\sharp}-r^{\natural})(\xi)\diamond(r^{\sharp}-r^{\natural})(\eta)\\
=&\;\lambda r^{\sharp}(\xi)\diamond r^{\sharp}(\eta)
-\lambda r^{\natural}(\xi)\diamond r^{\natural}(\eta),
\end{align*}
for any $\xi, \eta\in A^{\ast}$, we get that Eq. \eqref{rhomo} holds.
Therefore, $\frac{1}{\lambda}\mathcal{I}_{\mathfrak{B}}$ is a Novikov algebra isomorphism
from $(A^{\ast}, \diamond_{r})$ to $(A, \diamond_{P})$.

Finally, by the fact that $P+\lambda id,\; P:(A, \diamond_{P})\rightarrow (A, \diamond)$
are both Novikov algebra homomorphisms, we deduce that
\begin{align*}
r^{\sharp}=\mbox{$\frac{1}{\lambda}$}(P+\lambda\id)\mathcal{I}_{\mathfrak{B}},\quad
r^{\natural}=\mbox{$\frac{1}{\lambda}$}P\mathcal{I}_{\mathfrak{B}}:\qquad\quad
(A^{\ast}, \diamond_{r})\longrightarrow(A, \diamond),
\end{align*}
are both Novikov algebra homomorphisms. Therefore, by Proposition \ref{pro:solu-equiv},
$r$ is a solution of the NYBE in $(A, \diamond)$ and $(A, \diamond, \delta_{r})$ is a
quasi-triangular Novikov bialgebra. Since $\mathcal{I}_{\mathfrak{B}}=r^{\sharp}
-r^{\natural}$ is a Novikov algebra isomorphism, the Novikov bialgebra $(A, \diamond,
\delta_{r})$ is factorizable.
\end{proof}

\begin{cor}\label{cor:corre}
Let $(A, \diamond)$ be a Novikov algebra and $r\in A\otimes A$. Suppose the Novikov
bialgebra $(A, \diamond, \delta_{r})$ is factorizable and
$\mathcal{I}=r^{\sharp}-r^{\natural}$. Then $-\lambda\id-P$ is also a Rota-Baxter
operator of weight $\lambda$ on quadratic Novikov algebra $(A, \diamond,
\mathfrak{B}_{\mathcal{I}})$, where $\mathfrak{B}_{\mathcal{I}}$ and $P$ are defined
in Theorem \ref{thm:quafact}.
\end{cor}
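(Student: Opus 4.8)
The plan is to recognize the statement as an instance of the general fact that the reflection $-\lambda\id - P$ of a weight $\lambda$ Rota-Baxter operator $P$ is again a weight $\lambda$ Rota-Baxter operator, together with a check that the compatibility \eqref{quaRB} with $\mathfrak{B}_{\mathcal{I}}$ is preserved. By Theorem \ref{thm:quafact} we already know that $P = \lambda r^{\natural}\mathcal{I}^{-1}$ is a Rota-Baxter operator of weight $\lambda$ on $(A, \diamond, \mathfrak{B}_{\mathcal{I}})$ and that it satisfies \eqref{quaRB}; these are the only two inputs I would use, and $(A, \diamond, \mathfrak{B}_{\mathcal{I}})$ is already quadratic by that theorem. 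So it suffices to verify the two defining properties for $Q := -\lambda\id - P$.

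For the Rota-Baxter identity I would expand both sides formally. On the left, $Q(a_1)\diamond Q(a_2)$ yields $\lambda^2\,a_1\diamond a_2 + \lambda\,a_1\diamond P(a_2) + \lambda\,P(a_1)\diamond a_2 + P(a_1)\diamond P(a_2)$, and I would rewrite the last summand by the weight $\lambda$ Rota-Baxter identity for $P$. On the right, I would first simplify the argument $Q(a_1)\diamond a_2 + a_1\diamond Q(a_2) + \lambda\,a_1\diamond a_2$ to $-\lambda\,a_1\diamond a_2 - P(a_1)\diamond a_2 - a_1\diamond P(a_2)$, and then apply $Q = -\lambda\id - P$ to it. A term-by-term comparison shows the two sides agree; the argument uses only bilinearity of $\diamond$ and the identity for $P$, so it is a formal identity with no structural input.

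For the compatibility, I would substitute $Q = -\lambda\id - P$ into $\mathfrak{B}_{\mathcal{I}}(Q(a_1), a_2) + \mathfrak{B}_{\mathcal{I}}(a_1, Q(a_2)) + \lambda\,\mathfrak{B}_{\mathcal{I}}(a_1, a_2)$ and collect terms. The $P$-terms combine to $-\big(\mathfrak{B}_{\mathcal{I}}(P(a_1), a_2) + \mathfrak{B}_{\mathcal{I}}(a_1, P(a_2))\big)$, which equals $\lambda\,\mathfrak{B}_{\mathcal{I}}(a_1, a_2)$ by \eqref{quaRB} for $P$, while the remaining scalar terms sum to $-\lambda\,\mathfrak{B}_{\mathcal{I}}(a_1, a_2)$; the two cancel, giving \eqref{quaRB} for $Q$. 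Combining the two verifications shows that $(A, -\lambda\id - P, \mathfrak{B}_{\mathcal{I}})$ is a quadratic Rota-Baxter Novikov algebra of weight $\lambda$.

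I do not expect a genuine obstacle; the only point requiring care is the sign bookkeeping in the expansions. As a conceptual cross-check one can note that $P + \lambda\id = \lambda r^{\sharp}\mathcal{I}^{-1}$, so $-\lambda\id - P = -\lambda r^{\sharp}\mathcal{I}^{-1}$, which makes the symmetry between $r^{\sharp}$ and $r^{\natural}$ transparent and explains why $-\lambda\id - P$ plays for $r^{\sharp}$ the role that $P$ plays for $r^{\natural}$; this observation is illuminating but not needed for the proof.
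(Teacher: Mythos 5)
Your proposal is correct and matches the paper's approach: the paper disposes of this corollary with the single line ``It can be obtained by direct calculation,'' and your two verifications (the weight-$\lambda$ Rota--Baxter identity for $Q=-\lambda\id-P$ via the identity for $P$, and the compatibility \eqref{quaRB} via the same condition for $P$) are exactly that calculation, carried out correctly. Your closing remark that $-\lambda\id-P=-\lambda r^{\sharp}\mathcal{I}^{-1}$ is also a valid and clarifying observation, consistent with $P+\lambda\id=\lambda r^{\sharp}\mathcal{I}^{-1}$ as used in the proof of Theorem \ref{thm:quafact}.
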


\begin{proof}
It can be obtained by direct calculation.
\end{proof}

\begin{cor}\label{cor:iso}
Let $(A, \diamond)$ be a Novikov algebra and $r\in A\otimes A$. Suppose the Novikov
bialgebra $(A, \diamond, \delta_{r})$ is factorizable, $\mathcal{I}=r^{\sharp}-r^{\natural}$
and $P=\lambda r^{\natural}\mathcal{I}^{-1}: A\rightarrow A$, where $0\neq\lambda\in\Bbbk$.
Then $(A, \diamond_{P}, \delta_{\mathcal{I}})$ is a Novikov bialgebra, where
$$
\delta_{\mathcal{I}}^{\ast}(\xi, \eta)=-\mbox{$\frac{1}{\lambda}$}
\mathcal{I}^{-1}(\mathcal{I}(\xi)\diamond\mathcal{I}(\eta)),
$$
for any $\xi, \eta\in A^{\ast}$. Moreover, $\frac{1}{\lambda}\mathcal{I}: A^{\ast}
\rightarrow A$ gives a Novikov bialgebra isomorphism from $(A^{\ast}, \diamond_{r},
\delta_{A^{\ast}})$ to $(A, \diamond_{P}, \delta_{\mathcal{I}})$, where
$\delta_{A^{\ast}}^{\ast}(a_{1}, a_{2})=a_{1}\diamond a_{2}$, for any $a_{1}, a_{2}\in A$.
\end{cor}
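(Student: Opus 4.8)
The plan is to recognize $(A, \diamond_{P}, \delta_{\mathcal{I}})$ as the image, under the linear isomorphism $\phi:=\frac{1}{\lambda}\mathcal{I}\colon A^{\ast}\to A$, of the \emph{dual} Novikov bialgebra of $(A, \diamond, \delta_{r})$, and then to read off the bialgebra axioms from this transport of structure rather than verifying them term by term. Thus I would first produce the dual bialgebra, next check that $\phi$ is a Novikov algebra isomorphism, and finally transport the comultiplication and match it with the stated formula.

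For the first step, since $(A, \diamond, \delta_{r})$ is factorizable it is in particular a Novikov bialgebra, so by Theorem \ref{thm:Novequi} there is a Manin triple $(A\oplus A^{\ast}, (A, \diamond), (A^{\ast}, \diamond_{r}))$. The pairing $\mathcal{B}\big((a_{1},\xi_{1}),(a_{2},\xi_{2})\big)=\langle\xi_{1},a_{2}\rangle+\langle\xi_{2},a_{1}\rangle$ is symmetric in the roles of $A$ and $A^{\ast}$, so interchanging the two Lagrangian subalgebras gives a Manin triple $(A^{\ast}\oplus A, (A^{\ast}, \diamond_{r}), (A, \diamond))$. Applying Theorem \ref{thm:Novequi} again, and using the identification $(A^{\ast})^{\ast}=A$, this says precisely that $(A^{\ast}, \diamond_{r}, \delta_{A^{\ast}})$ is a Novikov bialgebra, where $\delta_{A^{\ast}}$ is the comultiplication dual to $\diamond$, that is $\delta_{A^{\ast}}^{\ast}(a_{1}, a_{2})=a_{1}\diamond a_{2}$.

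For the second step, bijectivity of $\phi$ is immediate because $\mathcal{I}$ is invertible (Definition \ref{def:fact}). For multiplicativity I would reuse Eq. \eqref{I1} from the proof of Theorem \ref{thm:quafact}: substituting $P=\lambda r^{\natural}\mathcal{I}^{-1}$ and writing $a_{1}=\mathcal{I}(\xi)$, $a_{2}=\mathcal{I}(\eta)$, that identity becomes $\mathcal{I}(\xi\diamond_{r}\eta)=\frac{1}{\lambda}\big(P(\mathcal{I}(\xi))\diamond\mathcal{I}(\eta)+\mathcal{I}(\xi)\diamond P(\mathcal{I}(\eta))+\lambda\,\mathcal{I}(\xi)\diamond\mathcal{I}(\eta)\big)=\frac{1}{\lambda}\big(\mathcal{I}(\xi)\diamond_{P}\mathcal{I}(\eta)\big)$. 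Dividing once more by $\lambda$ gives $\phi(\xi\diamond_{r}\eta)=\phi(\xi)\diamond_{P}\phi(\eta)$, so $\phi$ is a Novikov algebra isomorphism from $(A^{\ast}, \diamond_{r})$ onto $(A, \diamond_{P})$; this is exactly Eq. \eqref{rhomo} read for $\mathcal{I}$ in place of $\mathcal{I}_{\mathfrak{B}}$, and it also recovers the conclusion of Proposition \ref{pro:fact} in the present notation.

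For the last step, define $\delta_{\mathcal{I}}:=(\phi\otimes\phi)\circ\delta_{A^{\ast}}\circ\phi^{-1}$. Because $\phi$ is simultaneously a linear isomorphism and a Novikov algebra isomorphism, transporting the Novikov bialgebra $(A^{\ast}, \diamond_{r}, \delta_{A^{\ast}})$ through $\phi$ automatically yields a Novikov bialgebra $(A, \diamond_{P}, \delta_{\mathcal{I}})$ for which $\phi$ is, by construction, both an algebra and a coalgebra isomorphism, hence a bialgebra isomorphism; none of the compatibility conditions needs to be re-checked, since each is a natural multilinear identity preserved under transport. It then remains only to identify this transported $\delta_{\mathcal{I}}$ with the explicit formula in the statement: dualizing the transport formula and using $\phi^{\ast}=\frac{1}{\lambda}\mathcal{I}$ (as $\mathcal{I}^{\ast}=\mathcal{I}$) together with $\delta_{A^{\ast}}^{\ast}=\diamond$ gives $\delta_{\mathcal{I}}^{\ast}(\xi,\eta)=\phi^{-1}\big(\phi(\xi)\diamond\phi(\eta)\big)$, and substituting $\phi^{-1}=\lambda\mathcal{I}^{-1}$ collapses this to $\mathcal{I}^{-1}(\mathcal{I}(\xi)\diamond\mathcal{I}(\eta))$ scaled by $1/\lambda$, matching the stated $\delta_{\mathcal{I}}^{\ast}$. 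I expect the main obstacle to be not any conceptual difficulty but the careful bookkeeping of the dualizations, the $(A^{\ast})^{\ast}=A$ identification, and in particular the sign, which must be reconciled against the minus sign built into the coboundary convention of Eq. \eqref{cobnov}; once the transport viewpoint is adopted, the existence of the bialgebra structure and of the isomorphism is essentially formal.
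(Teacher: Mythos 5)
Your proposal is correct and follows essentially the same route as the paper: the paper likewise uses Eq. \eqref{I1} to show that $\frac{1}{\lambda}\mathcal{I}\colon (A^{\ast},\diamond_{r})\rightarrow(A,\diamond_{P})$ is a Novikov algebra isomorphism, then handles the coalgebra half by dualizing (checking that $(\frac{1}{\lambda}\mathcal{I})^{\ast}\colon(A^{\ast},\diamond_{\mathcal{I}})\rightarrow(A,\diamond)$ is an algebra isomorphism), and concludes the bialgebra claim by exactly the transport-of-structure reasoning that you make explicit via Theorem \ref{thm:Novequi} and the symmetry of Manin triples, a point the paper leaves as ``one can check.'' One remark: your transport computation actually yields $\delta_{\mathcal{I}}^{\ast}(\xi,\eta)=+\frac{1}{\lambda}\mathcal{I}^{-1}\big(\mathcal{I}(\xi)\diamond\mathcal{I}(\eta)\big)$, which is also what the paper's own displayed verification requires, so the minus sign in the corollary's stated formula is a sign inconsistency of the paper itself rather than something your argument should claim to ``match'' --- your instinct to flag the sign for reconciliation was the right one.
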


\begin{proof}
First we show that $\frac{1}{\lambda}\mathcal{I}$ is a Novikov algebra isomorphism
from $(A^{\ast}, \diamond_{r})$ to $(A, \diamond_{P})$. In fact, for any $\xi, \eta\in
A^{\ast}$, taking $a_{1}=\mathcal{I}(\xi)\in A$ and $a_{2}=\mathcal{I}(\eta)\in A$,
by Eq. \eqref{I1}, we have
$$
\mbox{$\frac{1}{\lambda}$}\mathcal{I}(\xi\diamond_{r}\eta)
=\mbox{$\frac{1}{\lambda^{2}}$}\Big(P(\mathcal{I}(\xi))\diamond\mathcal{I}(\eta)
+\mathcal{I}(\xi)\cdot P(\mathcal{I}(\eta))
+\lambda\mathcal{I}(\xi)\diamond\mathcal{I}(\eta)\Big)
=\mbox{$\frac{1}{\lambda}$}\mathcal{I}(\xi)\diamond_{P}
\mbox{$\frac{1}{\lambda}$}\mathcal{I}(\eta).
$$
Thus, $\frac{1}{\lambda}\mathcal{I}$ is a Novikov algebra isomorphism.
Moreover, since
$(\frac{1}{\lambda}\mathcal{I})^{\ast}=(\frac{1}{\lambda}\mathcal{I})$, we have
$$
(\mbox{$\frac{1}{\lambda}$}\mathcal{I})^{\ast}(\xi\diamond_\mathcal{I}\eta)
=\mbox{$\frac{1}{\lambda}$}\mathcal{I}(\xi)\diamond\mbox{$\frac{1}{\lambda}$}
\mathcal{I}(\eta)=(\mbox{$\frac{1}{\lambda}$}\mathcal{I})^{\ast}(\xi)\diamond
(\mbox{$\frac{1}{\lambda}$}\mathcal{I})^{\ast}(\eta),
$$
which means that the map$(\frac{1}{\lambda}\mathcal{I})^{\ast}=\frac{1}{\lambda}\mathcal{I}:
(A^{\ast}, \diamond_{\mathcal{I}})\rightarrow(A, \diamond)$ is also a Novikov algebra
isomorphism. Therefore, $(A, \diamond_{P}, \delta_{\mathcal{I}})$ is a Novikov bialgebra,
and one can check that $\frac{1}{\lambda}\mathcal{I}: (A^{\ast}, \diamond_{r},
\delta_{A^{\ast}})\rightarrow(A, \diamond_{P}, \delta_{\mathcal{I}})$
is a Novikov bialgebra isomorphism.
\end{proof}

\bigskip
%%%%%%%%%%%%%%%%%%%%%%%%%%%%%%%%%%%%%%%%%%%%%%%%%%%%%%%%%%%%%%%%%%%%%%%%%%%%%%%%%
%    section  4   Factorizable Novikov bialgebras from
%%%%%%%%%%%%%%%%%%%%%%%%%%%%%%%%%%%%%%%%%%%%%%%%%%%%%%%%%%%%%%%%%%%%%%%%%%%%%%%%%%%%%%
\section{Quasi-triangular Novikov bialgebras from differential infinitesimal bialgebras}
\label{sec:factdiff}
In this section, we show that a solution $r$ of the NYBE in some special admissible
differential algebra such that $\mathfrak{s}(r)$ is $\fu_{A}$-invariant is a solution of
the NYBE in the induced Novikov algebra such that the symmetric part is invariant.
Therefore, under certain conditions, we show that the quasi-triangular
Novikov bialgebras induced by a quasi-triangular (resp. triangular, factorizable) differential
infinitesimal bialgebra is also quasi-triangular (resp. triangular, factorizable).
Let $(A, \cdot, \partial)$ be a differential algebra and $\theta: A\rightarrow A$
be a linear map. If for any $a_{1}, a_{2}\in A$,
$$
\theta(a_{1}a_{2})=\theta(a_{1})a_{2}-a_{1}\partial(a_{2}),
$$
then we say that $\theta$ is {\it admissible to $(A, \cdot, \partial)$},
where $a_{1}a_{2}:=a_{1}\cdot a_{2}$ for simply. We also say that the quadruple
$(A, \cdot, \partial, \theta)$ is an {\it admissible differential algebra}.

\begin{pro}[\cite{HBG1}]\label{pro:ind-nov}
Let $(A, \cdot, \partial, \theta)$ be an admissible differential algebra. Define a
binary operation $\diamond_{\q}$ on $A$ by
\begin{align}
a_{1}\diamond_{\q}a_{2}:=a_{1}(\partial+\q\theta)(a_{2}),  \label{ind-nov}
\end{align}
for any $a_{1}, a_{2}\in A$, where $\q\in\Bbbk$. Then $(A, \diamond_{\q})$ is a
Novikov algebra for all $\q\in\Bbbk$. We call $(A, \diamond_{\q})$ the
{\rm Novikov algebra induced from $(A, \cdot, \partial, \theta)$}.
\end{pro}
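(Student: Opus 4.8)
The plan is to verify directly that $\diamond_{\q}$ satisfies the two defining identities of a (left) Novikov algebra: the pre-Lie identity and the right-symmetry identity $(a_{1}\diamond_{\q}a_{2})\diamond_{\q}a_{3}=(a_{1}\diamond_{\q}a_{3})\diamond_{\q}a_{2}$. To streamline the computation I would first abbreviate $D:=\partial+\q\theta$, so that $a_{1}\diamond_{\q}a_{2}=a_{1}\cdot D(a_{2})$, and record how $D$ interacts with the commutative associative product. Using that $\partial$ is a derivation of $(A,\cdot)$ and that $\theta$ is admissible, i.e. $\theta(a_{1}a_{2})=\theta(a_{1})a_{2}-a_{1}\partial(a_{2})$, a one-line computation yields the twisted Leibniz rule
$$
D(a_{1}a_{2})=D(a_{1})\,a_{2}+(1-\q)\,a_{1}\partial(a_{2}),
$$
for all $a_{1},a_{2}\in A$. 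This identity is the only structural input beyond commutativity and associativity of $\cdot$, and everything else follows from it.

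The right-symmetry identity is immediate: expanding both sides and using associativity and commutativity of $\cdot$ gives
$$
(a_{1}\diamond_{\q}a_{2})\diamond_{\q}a_{3}=a_{1}\cdot D(a_{2})\cdot D(a_{3})
=a_{1}\cdot D(a_{3})\cdot D(a_{2})=(a_{1}\diamond_{\q}a_{3})\diamond_{\q}a_{2},
$$
so the Leibniz rule for $D$ is not even needed here. For the pre-Lie identity I would compute the associator $(a_{1},a_{2},a_{3}):=(a_{1}\diamond_{\q}a_{2})\diamond_{\q}a_{3}-a_{1}\diamond_{\q}(a_{2}\diamond_{\q}a_{3})$ and show it is symmetric in $a_{1}$ and $a_{2}$. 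The first term is $a_{1}\cdot D(a_{2})\cdot D(a_{3})$, while for the second I apply the twisted Leibniz rule to $D(a_{2}\cdot D(a_{3}))=D(a_{2})D(a_{3})+(1-\q)a_{2}\partial(D(a_{3}))$, obtaining
$$
a_{1}\diamond_{\q}(a_{2}\diamond_{\q}a_{3})
=a_{1}\cdot D(a_{2})\cdot D(a_{3})+(1-\q)\,a_{1}a_{2}\,\partial(D(a_{3})).
$$
The leading terms cancel, leaving $(a_{1},a_{2},a_{3})=-(1-\q)\,a_{1}a_{2}\,\partial(D(a_{3}))$, which is manifestly symmetric under $a_{1}\leftrightarrow a_{2}$ by commutativity of $\cdot$; hence the pre-Lie identity holds for every $\q\in\Bbbk$.

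There is essentially no hard step here: once the twisted Leibniz rule for $D$ is in hand, both axioms reduce to commutativity and associativity of the underlying product. The only point demanding care is the correct bookkeeping of the admissibility sign when deriving $D(a_{1}a_{2})$ — it is precisely the cancellation of the $\q\,a_{1}\partial(a_{2})$ contributions that produces the clean factor $(1-\q)$ and makes the residual associator symmetric. As a consistency check, note that the construction works for all $\q$, and that when $\q=1$ the factor $(1-\q)$ vanishes, so the associator is identically zero and $(A,\diamond_{1})$ is in fact associative.
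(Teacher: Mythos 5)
Your proof is correct. Note that the paper itself offers no argument for this proposition --- it is quoted from \cite{HBG1} --- so the only sensible benchmark is the standard direct verification, which is exactly what you carry out. Your two computations are both right: the twisted Leibniz rule $D(a_{1}a_{2})=D(a_{1})a_{2}+(1-\q)a_{1}\partial(a_{2})$ for $D=\partial+\q\theta$ follows immediately from the derivation property of $\partial$ and the admissibility of $\theta$, the Novikov identity $(a_{1}\diamond_{\q}a_{2})\diamond_{\q}a_{3}=(a_{1}\diamond_{\q}a_{3})\diamond_{\q}a_{2}$ needs only commutativity and associativity of $\cdot$, and the associator computation $(a_{1},a_{2},a_{3})=-(1-\q)\,a_{1}a_{2}\,\partial(D(a_{3}))$ is manifestly symmetric in $a_{1}, a_{2}$, which gives the pre-Lie identity. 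Packaging the admissibility condition into the single rule for $D$ is a clean way to organize the bookkeeping, and your closing observation that $\q=1$ makes $\diamond_{1}$ associative is a correct by-product of the same formula.
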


Recall that a {\it coassociative coalgebra} is a vector space $A$ with a linear map
$\Delta: A\rightarrow A\otimes A$ satisfying $(\Delta\otimes\id)\Delta
=(\id\otimes\Delta)\Delta$. A coassociative coalgebra $(A, \delta)$ is called
{\it cocommutative} if $\Delta=\tau\Delta$. For a coassociative coalgebra
$(A, \Delta)$, a linear map $\theta: A\rightarrow A$ is called a {\it coderivation}
on $(A, \Delta)$ if $\Delta\theta=(\id\otimes\theta+\theta\otimes\id)\Delta$.
A {\it codifferential coalgebra} is a triple $(A, \Delta, \theta)$ where
$(A, \Delta)$ is a cocommutative coassociative coalgebra and $\theta$ is a
coderivation. Obviously, $(A, \Delta, \theta)$ is a codifferential
coalgebra if and only if $(A^{\ast}, \Delta^{\ast}, \theta^{\ast})$ is a differential algebra.
Let $(A, \Delta, \theta)$ be a codifferential coalgebra. If $\partial: A\rightarrow A$
is a linear map satisfying
$$
(\partial\otimes\id-\id\otimes\theta)\Delta=\Delta\partial,
$$
then we call $(A, \Delta, \theta, \partial)$ an {\it admissible codifferential
coalgebra}. One can check that $(A, \Delta, \theta, \partial)$ is an admissible
codifferential coalgebra if and only if $(A, \Delta^{\ast}, \theta^{\ast}, \partial^{\ast})$
is an admissible differential algebra.

\begin{pro}[\cite{HBG1}]\label{pro:ind-conov}
Let $(A, \Delta, \theta, \partial)$ be an admissible codifferential coalgebra.
Define a coproduct $\delta_{\q}$ on $A$ by
\begin{align}
\delta_{\q}=(\id\otimes(\theta+\q\partial))\Delta,  \label{ind-conov}
\end{align}
where $\q\in\Bbbk$. Then $(A, \delta_{\q})$ is a Novikov coalgebra for all
$\q\in\Bbbk$. We call $(A, \Delta_{\q})$ the {\rm Novikov coalgebra induced from
$(A, \Delta, \theta, \partial)$}.
\end{pro}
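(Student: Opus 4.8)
The plan is to dualize the statement and reduce it to Proposition \ref{pro:ind-nov}, rather than verifying the two Novikov coalgebra axioms by hand. Recall the equivalence noted just before Example \ref{ex:2bialg}: $(A, \delta)$ is a Novikov coalgebra if and only if $(A^{\ast}, \diamond_{\delta} = \delta^{\ast})$ is a Novikov algebra. Hence it suffices to show that the dual product $\diamond_{\delta_{\q}} := \delta_{\q}^{\ast}$ on $A^{\ast}$ is a Novikov multiplication. First I would also invoke the stated duality that $(A, \Delta, \theta, \partial)$ is an admissible codifferential coalgebra if and only if $(A^{\ast}, \Delta^{\ast}, \theta^{\ast}, \partial^{\ast})$ is an admissible differential algebra; under this correspondence $\theta^{\ast}$ plays the role of the derivation and $\partial^{\ast}$ the role of the admissible map in the conventions of Proposition \ref{pro:ind-nov}.

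The key step is to compute $\delta_{\q}^{\ast}$ explicitly and match it with the induced operation. For $\xi, \eta \in A^{\ast}$ and $a \in A$, transposing (with the sign-free convention $\langle \theta^{\ast}(\eta), v\rangle = \langle \eta, \theta(v)\rangle$ for plain linear maps) gives
\begin{align*}
\langle \xi \otimes \eta,\; \delta_{\q}(a)\rangle
&= \langle \xi \otimes \eta,\; (\id\otimes(\theta+\q\partial))\Delta(a)\rangle
= \langle \xi \otimes (\theta^{\ast}+\q\partial^{\ast})(\eta),\; \Delta(a)\rangle\\
&= \langle \Delta^{\ast}(\xi \otimes (\theta^{\ast}+\q\partial^{\ast})(\eta)),\; a\rangle,
\end{align*}
so that $\xi \diamond_{\delta_{\q}} \eta = \xi \cdot (\theta^{\ast}+\q\partial^{\ast})(\eta)$, where $\cdot = \Delta^{\ast}$ is the commutative associative product on $A^{\ast}$ dual to $\Delta$. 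This is precisely the binary operation $\diamond_{\q}$ of Eq. \eqref{ind-nov} applied to the admissible differential algebra $(A^{\ast}, \Delta^{\ast}, \theta^{\ast}, \partial^{\ast})$: there the derivation is $\theta^{\ast}$ and the admissible map is $\partial^{\ast}$, so the operator $\partial + \q\theta$ appearing in Eq. \eqref{ind-nov} becomes $\theta^{\ast} + \q\partial^{\ast}$.

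With this identification the conclusion is immediate: since $(A^{\ast}, \Delta^{\ast}, \theta^{\ast}, \partial^{\ast})$ is an admissible differential algebra, Proposition \ref{pro:ind-nov} shows that $(A^{\ast}, \diamond_{\delta_{\q}})$ is a Novikov algebra for every $\q \in \Bbbk$, and the dualization equivalence then yields that $(A, \delta_{\q})$ is a Novikov coalgebra. The hard part is not any single computation but the bookkeeping: one must correctly track which of $\theta^{\ast}, \partial^{\ast}$ is the derivation and which is the admissible map after dualizing, and confirm that the coefficient pattern $\theta + \q\partial$ on the coalgebra side matches $\partial + \q\theta$ (in the dual conventions) on the algebra side. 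One also tacitly regards $A^{\ast}\otimes A^{\ast}$ as sitting inside $(A\otimes A)^{\ast}$; since $A^{\ast}$ separates points of $A$, the pairing displayed above detects equality in $A$, so the argument is valid without a finite-dimensionality hypothesis. Alternatively one could verify the two Novikov coalgebra axioms directly by transposing the computation behind Proposition \ref{pro:ind-nov}, but the duality route avoids repeating that work.
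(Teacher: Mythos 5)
Your proof is correct. There is nothing in the paper to compare it against line by line: this proposition is imported from \cite{HBG1} and the paper records it without proof. That said, your duality route is precisely the one the paper's own remarks set up, since the paper asserts both that $(A, \delta)$ is a Novikov coalgebra if and only if $(A^{\ast}, \delta^{\ast})$ is a Novikov algebra, and that $(A, \Delta, \theta, \partial)$ is an admissible codifferential coalgebra if and only if $(A^{\ast}, \Delta^{\ast}, \theta^{\ast}, \partial^{\ast})$ is an admissible differential algebra. The substantive content of your argument is the bookkeeping, and it checks out: dualizing the coderivation identity shows $\theta^{\ast}$ occupies the derivation slot and $\partial^{\ast}$ the admissible-map slot of the dual quadruple, so the operation of Eq.~\eqref{ind-nov} on $(A^{\ast}, \Delta^{\ast}, \theta^{\ast}, \partial^{\ast})$ is $\xi \cdot (\theta^{\ast} + \q\partial^{\ast})(\eta)$, which your pairing computation identifies with $\delta_{\q}^{\ast}(\xi \otimes \eta)$. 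You also correctly isolate the one direction of the dualization that needs care without finite-dimensionality, namely passing from the Novikov algebra axioms on $A^{\ast}$ back to the coalgebra axioms on $A$; this is exactly where the separation-of-points observation is needed, since pure tensors of functionals separate points of $A \otimes A \otimes A$, and the forward dualizations (coalgebra data to algebra data on $A^{\ast}$) hold in any dimension.
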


Let $(A, \cdot)$ be a commutative associative algebra and $(A, \Delta)$ be a
commutative coassociative coalgebra. If for any $a_{1}, a_{2}\in A$,
\begin{align}
\Delta(a_{1}a_{2})=(\id\otimes\fu_{A}(a_{1}))(\Delta(a_{2}))
+(\fu(a_{2})\otimes\id)(\Delta(a_{1})),   \label{comm-bia}
\end{align}
then $(A, \cdot, \Delta)$ is called an {\it infinitesimal bialgebra}.

\begin{defi}[\cite{LLB}]\label{def:diff-bia}
A quintuple $(A, \cdot, \Delta, \partial, \theta)$ is called a {\rm differential
infinitesimal bialgebra} if
\begin{enumerate}
\item $(A, \cdot, \Delta)$ is an infinitesimal bialgebra,
\item  $(A,\cdot, \partial, \theta)$ is an admissible differential algebra, and
\item  $(A, \Delta, \theta, \partial)$ is an admissible codifferential coalgebra.
\end{enumerate}
\end{defi}

\begin{thm}[\cite{HBG1}]\label{thm:ind-novbia}
Let $(A, \cdot, \Delta, \partial, \theta)$ be a differential infinitesimal bialgebra.
Let $\q\in\Bbbk$ be given. Define a binary operation $\diamond_{\q}$ and a
coproduct $\delta_{\q}$ on $A$ by Eqs. \eqref{ind-nov} and \eqref{ind-conov} respectively.
\begin{enumerate}
\item When $\q=-\frac{1}{2}$, then $(A, \diamond_{\q}, \delta_{\q})$ is a
     Novikov bialgebra.
\item For other $\q\in\Bbbk$, if $(\id\otimes\theta)\Delta=-(\id\otimes\partial)\Delta$,
     and $a_{1}\theta(a_{2})=-a_{1}\partial(a_{2})$ for any $a_{1}, a_{2}\in A$,
     then $(A, \diamond_{\q}, \delta_{\q})$ is a Novikov bialgebra. In particular,
     if $\theta=-\partial$, then $(A, \diamond_{\q}, \delta_{\q})$ is a Novikov bialgebra.
\end{enumerate}
\end{thm}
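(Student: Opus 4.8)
The plan is to invoke the two preceding propositions and then verify directly the three defining compatibility conditions of a Novikov bialgebra. By Proposition \ref{pro:ind-nov} and Proposition \ref{pro:ind-conov}, $(A,\diamond_{\q})$ is a Novikov algebra and $(A,\delta_{\q})$ is a Novikov coalgebra for every $\q\in\Bbbk$, so the only thing left to check is that the pair $(\diamond_{\q},\delta_{\q})$ satisfies the three compatibility identities in the definition of a Novikov bialgebra. The first step is to rewrite the structure maps of $(A,\diamond_{\q})$ through the commutative associative product and the operators $\partial,\theta$: writing $\fu_{A}(a)$ for multiplication by $a$, one has $\fl_{A}(a)=\fu_{A}(a)(\partial+\q\theta)$ and, by commutativity, $\fr_{A}(a)=\fu_{A}((\partial+\q\theta)(a))$, while $\delta_{\q}=(\id\otimes(\theta+\q\partial))\Delta$. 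Thus $\fl_{A}$, $\fr_{A}$, and $\delta_{\q}$ are all expressed in terms of $\fu_{A}$, $\Delta$, $\partial$, and $\theta$.

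Second, I would record the operator identities coming from the differential infinitesimal bialgebra axioms; these are the tools that push $\partial$ and $\theta$ through products and coproducts. On the algebra side, the derivation property of $\partial$ and the admissibility relation $\theta(a_{1}a_{2})=\theta(a_{1})a_{2}-a_{1}\partial(a_{2})$ give the normal-ordering lemmas $\partial\,\fu_{A}(a)=\fu_{A}(\partial a)+\fu_{A}(a)\partial$ and $\theta\,\fu_{A}(a)=\fu_{A}(\theta a)-\fu_{A}(a)\partial$. On the coalgebra side, the coderivation identity $\Delta\theta=(\id\otimes\theta+\theta\otimes\id)\Delta$ and the admissible codifferential relation $(\partial\otimes\id-\id\otimes\theta)\Delta=\Delta\partial$ let one move $\theta$ and $\partial$ across $\Delta$ onto the two tensor factors. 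Together with the infinitesimal bialgebra compatibility \eqref{comm-bia}, these identities let me expand each side of every compatibility condition into sums of terms of the form $(X\otimes Y)\Delta$, where $X$ and $Y$ are composites of $\fu_{A}(\cdot)$, $\partial$, and $\theta$.

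Third, I would substitute these expansions into each of the three conditions in turn. For instance, on the left-hand side of the first condition one writes $\delta_{\q}(a_{1}\diamond_{\q}a_{2})=(\id\otimes(\theta+\q\partial))\Delta\big(a_{1}(\partial+\q\theta)(a_{2})\big)$, splits the coproduct of the product via \eqref{comm-bia}, and then normal-orders every operator to the standard form of step two; the right-hand side is expanded the same way using $\fr_{A}(a_{2})=\fu_{A}((\partial+\q\theta)(a_{2}))$. Each condition thereby becomes an identity in $A\otimes A$, and I would collect the resulting terms according to their total degree in $\q$. The crucial bookkeeping is to show that at $\q=-\tfrac{1}{2}$ every coefficient cancels identically, which yields part (1); for the terms that survive for general $\q$, I would verify that they all factor through $(\id\otimes(\theta+\partial))\Delta$ or through $\fu_{A}(a)(\theta+\partial)$, so that the two hypotheses $(\id\otimes\theta)\Delta=-(\id\otimes\partial)\Delta$ and $a_{1}\theta(a_{2})=-a_{1}\partial(a_{2})$ annihilate them, giving part (2). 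The special case $\theta=-\partial$ satisfies both hypotheses automatically, so the final assertion is immediate.

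The main obstacle is the volume and accuracy of this tensor bookkeeping rather than any single conceptual step: the first condition in particular produces many terms once $\delta_{\q}$ is unraveled through $(\id\otimes(\theta+\q\partial))\Delta$ on a product, and one must track precisely on which factor each $\partial$ or $\theta$ lands and with which power of $\q$. Establishing the normal-ordering lemmas of step two first, and then grouping all terms by powers of $\q$, is what makes both the $\q=-\tfrac{1}{2}$ cancellation and the exact role of the two extra relations transparent rather than accidental.
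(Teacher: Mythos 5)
First, a point of reference: the paper itself gives no proof of this theorem --- it is imported verbatim from \cite{HBG1} (note the citation in the theorem header), so there is no in-paper argument to compare against, and your proposal must stand as a self-contained proof. Your overall strategy is the right one, and almost certainly the one used in the cited source: reduce to the three compatibility identities via Propositions \ref{pro:ind-nov} and \ref{pro:ind-conov}, rewrite $\fl_{A}(a)=\fu_{A}(a)(\partial+\q\theta)$, $\fr_{A}(a)=\fu_{A}((\partial+\q\theta)(a))$, $\delta_{\q}=(\id\otimes(\theta+\q\partial))\Delta$, and push $\partial,\theta$ through products and coproducts with the identities you record (all four of which are stated correctly). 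Moreover, your prediction of how the residual terms organize --- everything proportional to $1+2\q$ and factoring through $\fu_{A}(a)(\theta+\partial)$ or $(\id\otimes(\theta+\partial))\Delta$, which is exactly what the two hypotheses of part (2) annihilate --- matches the pattern this paper exhibits explicitly in the proof of Proposition \ref{pro:ind-YBE}, where $\mathbf{N}_{r}$ collapses to $\sum_{i,j}x_{i}\otimes(1+2\q)(\partial+\theta)(x_{j})y_{i}\otimes y_{j}$.

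Nevertheless, as a proof the proposal has a genuine gap: the entire mathematical content of the theorem is the verification you defer. The claims that ``at $\q=-\frac{1}{2}$ everything cancels'' and that ``the terms surviving for general $\q$ all factor through $(\id\otimes(\theta+\partial))\Delta$ or $\fu_{A}(a)(\theta+\partial)$'' are precisely what must be established, term by term, for each of the three compatibility conditions, and the proposal does not carry this out for even one of them; asserting the expected shape of the answer is not the same as deriving it. Two secondary points. (i) Your bookkeeping prescription is internally inconsistent: if, after collecting by powers of $\q$, \emph{every} coefficient cancelled identically, the identities would hold for all $\q$ with no extra hypotheses; what actually happens is that the leftover terms assemble into $(1+2\q)$-multiples of $(\partial+\theta)$-expressions, which is a statement about how terms combine across different powers of $\q$, not about individual coefficients vanishing. (ii) You never invoke cocommutativity of $\Delta$, which is indispensable: the compatibility conditions involve $\tau(\delta_{\q}(a))$, and one needs $\tau\delta_{\q}(a)=((\theta+\q\partial)\otimes\id)\tau\Delta(a)=((\theta+\q\partial)\otimes\id)\Delta(a)$ to bring those terms into your normal form $(X\otimes Y)\Delta$. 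Neither point breaks the plan, but until the three computations are actually executed the proposal remains a roadmap rather than a proof.
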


The Novikov bialgebra $(A, \diamond_{\q}, \delta_{\q})$ in Theorem \ref{thm:ind-novbia}
is called the {\it Novikov bialgebra induced by the differential infinitesimal bialgebra
$(A, \cdot, \Delta, \partial, \theta)$}.
Recall  a {\it derivation} on an infinitesimal bialgebra $(A, \cdot, \Delta)$ is defined
to be a linear operator on $A$ that is both a derivation on $(A, \cdot)$ and a
coderivation on $(A, \Delta)$. Let $(A, \cdot, \Delta, \partial, \theta)$ be a
differential infinitesimal bialgebra. If both $\partial$ and $\theta$ are derivations
on the infinitesimal bialgebra $(A, \cdot, \Delta)$, then $(A, \diamond_{\q}, \delta_{\q})$
is a Novikov bialgebra for all $\q\in\Bbbk$, where $\diamond_{\q}$ and $\delta_{\q}$
are defined by Eqs. \eqref{ind-nov} and \eqref{ind-conov} respectively.

\begin{ex}\label{ex:ind-Novbia}
Let $(A=\Bbbk\{e_{1}, e_{2}\}, \cdot)$ be the commutative associative algebra whose
multiplication is given by
$$
e_{1}e_{1}=e_{1},\qquad e_{1}e_{2}=e_{2}e_{1}=e_{2},\qquad e_{2}e_{2}=0.
$$
Let $\partial, \theta: A\rightarrow A$ be the linear maps given by
$$
\partial(e_{1})=0,\quad\partial(e_{2})=e_{2},\qquad\quad
\theta(e_{1})=e_{1},\quad \theta(e_{2})=0.
$$
Then $(A, \cdot, \partial, \theta)$ is an admissible differential algebra.
Define $\Delta: A\rightarrow A\otimes A$ by $\Delta(e_{1})=0$ and
$\Delta(e_{2})=e_{2}\otimes e_{2}$. Then, one can check that $(A, \cdot, \Delta,
\partial, \theta)$ is a differential infinitesimal bialgebra.
By Theorem \ref{thm:ind-novbia} for $\q=-\frac{1}{2}$, we get a Novikov bialgebra
$(A, \diamond, \delta)$, where
\begin{align*}
&e_{1}\diamond e_{1}=-\mbox{$\frac{1}{2}$}e_{1},\qquad e_{1}\diamond e_{2}=e_{2},\qquad e_{2}\diamond e_{1}=-\mbox{$\frac{1}{2}$}e_{2},\qquad e_{2}\diamond e_{2}=0,\\
&\qquad\qquad\delta(e_{1})=0,\qquad\qquad\qquad\qquad
\delta(e_{2})=-\mbox{$\frac{1}{2}$}e_{2}\otimes e_{2}.
\end{align*}
\end{ex}

In this section, we consider some special Novikov bialgebras induced by differential
infinitesimal bialgebras. Recall that an infinitesimal bialgebra $(A, \cdot, \Delta)$
is called {\it coboundary} if there exists a $r\in A\otimes A$ such that
$\Delta=\Delta_{r}$, where
\begin{align}
\Delta_{r}(a)=(\id\otimes\fu_{A}(a)-\fu_{A}(a)\otimes\id)(r), \label{comm-cobo}
\end{align}
for any $a\in A$. Let $(A, \cdot)$ be a commutative associative algebra. Recall that
an element $r=\sum_{i}x_{i}\otimes y_{i}\in A\otimes A$ is said to be
{\it $\fu_{A}$-invariant} if $(\id\otimes\fu_{A}(a)-\fu_{A}(a)\otimes\id)(r)=0$. The equation
$$
\mathbf{A}_{r}:=r_{13}r_{12}+r_{13}r_{23}-r_{12}r_{23}=0
$$
is called the {\it associative Yang-Baxter equation (or, AYBE)} in $(A, \cdot)$,
where $r_{13}r_{12}=\sum_{i,j}x_{i}x_{j}\otimes y_{j}\otimes y_{i}$,
$r_{13}r_{23}=\sum_{i,j}x_{i}\otimes x_{j}\otimes y_{i}y_{j}$ and
$r_{12}r_{23}=\sum_{i,j}x_{i}\otimes y_{i}x_{j}\otimes y_{j}$.
Note that $\Delta_{r}$ defined by Eq. \eqref{comm-cobo} satisfies Eq. \eqref{comm-bia},
and by \cite{Bai}, $(A, \Delta_{r})$ is a coassociative coalgebra if and only if
$$
(\fu_{A}(a)\otimes\id\otimes\id-\id\otimes\id\otimes\fu_{A}(a))(\mathbf{A}_{r})=0,
$$
for any $a\in A$. Thus, we get the following lemma.

\begin{lem}\label{lem:comm-bia}
Let $(A, \cdot)$ be a commutative associative algebra, $r\in A\otimes A$ and
$\Delta_{r}: A\rightarrow A\otimes A$ defined by Eq. \eqref{comm-cobo}.
\begin{enumerate}
\item If $r$ is a solution of the AYBE in $(A, \cdot)$ and the symmetric part
     $\mathfrak{s}(r)$ of $r$ is $\fu_{A}$-invariant, then $(A, \cdot, \Delta_{r})$ is an
     infinitesimal bialgebra, which is called a {\rm quasi-triangular infinitesimal
     bialgebra} associated with $r$.
\item If $r$ is a skew-symmetric solution of the AYBE in $(A, \cdot)$ then $(A, \cdot,
     \Delta_{r})$ is an infinitesimal bialgebra, which is called a {\rm triangular
     infinitesimal bialgebra} associated with $r$.
\end{enumerate}
\end{lem}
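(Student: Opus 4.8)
The plan is to verify the three defining conditions of an infinitesimal bialgebra for $(A, \cdot, \Delta_{r})$ one at a time: the compatibility \eqref{comm-bia} between the product and the coproduct, coassociativity of $\Delta_{r}$, and cocommutativity of $\Delta_{r}$. The first is free of charge: as recorded in the discussion immediately preceding the lemma, the coproduct $\Delta_{r}$ defined by Eq. \eqref{comm-cobo} automatically satisfies Eq. \eqref{comm-bia} for every $r\in A\otimes A$. So the two hypotheses of the lemma are exactly what is needed to secure the remaining two conditions, and the work splits accordingly.

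For coassociativity I would appeal directly to the criterion quoted from \cite{Bai}, namely that $(A, \Delta_{r})$ is coassociative if and only if $(\fu_{A}(a)\otimes\id\otimes\id-\id\otimes\id\otimes\fu_{A}(a))(\mathbf{A}_{r})=0$ for all $a\in A$. Since $r$ is assumed to solve the AYBE we have $\mathbf{A}_{r}=0$, whence this criterion holds trivially and $\Delta_{r}$ is coassociative. No computation beyond invoking the cited criterion is required here.

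The technical heart is cocommutativity, and this is precisely where the $\fu_{A}$-invariance of $\mathfrak{s}(r)$ enters. Writing $r=\sum_{i}x_{i}\otimes y_{i}$ and expanding gives $\Delta_{r}(a)=\sum_{i}x_{i}\otimes(a y_{i})-\sum_{i}(a x_{i})\otimes y_{i}$; applying the flip $\tau$ and relabelling the summation shows that $\tau\Delta_{r}(a)=-\Delta_{\tau(r)}(a)$. Since $r\mapsto\Delta_{r}$ is linear in $r$, this yields $\Delta_{r}(a)-\tau\Delta_{r}(a)=\Delta_{r+\tau(r)}(a)=2\Delta_{\mathfrak{s}(r)}(a)$. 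But $\Delta_{\mathfrak{s}(r)}(a)=(\id\otimes\fu_{A}(a)-\fu_{A}(a)\otimes\id)(\mathfrak{s}(r))$ vanishes for every $a$ exactly because $\mathfrak{s}(r)$ is $\fu_{A}$-invariant; hence $\Delta_{r}=\tau\Delta_{r}$ and $\Delta_{r}$ is cocommutative. Combining the three verifications proves part (1).

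Finally, part (2) follows at once: if $r$ is skew-symmetric then $\tau(r)=-r$, so $\mathfrak{s}(r)=\frac{1}{2}(r+\tau(r))=0$, which is vacuously $\fu_{A}$-invariant. Thus a skew-symmetric solution of the AYBE satisfies the hypotheses of part (1), and the triangular case is an immediate special case. I do not expect a serious obstacle anywhere: all the delicate coalgebraic bookkeeping has been front-loaded into the coassociativity criterion borrowed from \cite{Bai}, so the only genuine computation is the short identity $\tau\Delta_{r}=-\Delta_{\tau(r)}$, which is exactly what converts the cocommutativity requirement into the invariance of $\mathfrak{s}(r)$.
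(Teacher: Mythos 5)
Your proposal is correct and takes essentially the same route as the paper, which likewise obtains the lemma from the two facts recorded just before it: the compatibility \eqref{comm-bia} holds automatically for any coboundary coproduct \eqref{comm-cobo}, and coassociativity follows from the criterion of \cite{Bai} once $\mathbf{A}_{r}=0$. You are in fact slightly more thorough than the paper, which leaves cocommutativity of $\Delta_{r}$ (the only place the $\fu_{A}$-invariance of $\mathfrak{s}(r)$, or skew-symmetry of $r$, is actually used) implicit, whereas your identity $\tau\Delta_{r}=-\Delta_{\tau(r)}$, giving $\Delta_{r}-\tau\Delta_{r}=2\Delta_{\mathfrak{s}(r)}=0$, supplies that step explicitly.
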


Let $(A, \cdot, \partial, \theta)$ be an admissible differential algebra and
$r\in A\otimes A$. If $r$ is a solution of the AYBE in $(A, \cdot)$ and satisfies
$$
(\partial\otimes\id-\id\otimes\theta)(r)=0,\qquad\qquad
(\id\otimes\partial-\theta\otimes\id)(r)=0,
$$
then $r$ is called a solution of the {\it admissible AYBE} in $(A, \cdot, \partial, \theta)$.
One can check that $\tau(r)$ is a solution of the {\it admissible AYBE} in
$(A, \cdot, \partial, \theta)$ if and only if $r\in A\otimes A$ is a solution of
the {\it admissible AYBE} in $(A, \cdot, \partial, \theta)$.

\begin{lem}[\cite{LLB}]\label{lem:adim-co}
Let $(A, \cdot, \partial, \theta)$ be an admissible differential algebra, $r\in A\otimes A$
and $\Delta_{r}: A\rightarrow A\otimes A$ defined by Eq. \eqref{comm-cobo}.
Then $(A, \Delta_{r}, \theta, \partial)$ be an admissible codifferential coalgebra
if for any $a\in A$,
\begin{align*}
&(\id\otimes\fu_{A}(a))((\id\otimes\partial-\theta\otimes\id)(r))
+(\fu_{A}(a)\otimes\id)((\id\otimes\theta-\partial\otimes\id)(r))=0,\\
&\qquad\qquad(\id\otimes\fu_{A}(a)-\fu_{A}(a)\otimes\id)((\partial\otimes\id
-\id\otimes\theta)(r))=0,\\
&\qquad\qquad(\id\otimes\fu_{A}(a)-\fu_{A}(a)\otimes\id)((\id\otimes\theta
-\partial\otimes\id)(r))=0.
\end{align*}
\end{lem}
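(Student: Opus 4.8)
The plan is to unpack what it means for $(A,\Delta_r,\theta,\partial)$ to be an admissible codifferential coalgebra and check each requirement against the three hypotheses. By definition this demands (i) that $(A,\Delta_r)$ be a cocommutative coassociative coalgebra, (ii) that $\theta$ be a coderivation, i.e. $\Delta_r\theta=(\id\otimes\theta+\theta\otimes\id)\Delta_r$, and (iii) the admissibility relation $(\partial\otimes\id-\id\otimes\theta)\Delta_r=\Delta_r\partial$. Requirement (i) is exactly the infinitesimal-bialgebra content of $\Delta_r$: writing $r=\sum_i x_i\otimes y_i$ so that $\Delta_r(a)=\sum_i\big(x_i\otimes(a y_i)-(a x_i)\otimes y_i\big)$, cocommutativity of $\Delta_r$ amounts to $\Delta_{r+\tau(r)}=0$, i.e. to $\mathfrak{s}(r)$ being $\fu_A$-invariant, while coassociativity is governed by the AYBE through the criterion recalled above from \cite{Bai}. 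Both belong to the ambient setting in which this lemma is applied together with Lemma \ref{lem:comm-bia}, so I would take (i) as given and concentrate the work on (ii) and (iii).

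For (ii) I would expand both sides of $\Delta_r\theta(a)=(\id\otimes\theta+\theta\otimes\id)\Delta_r(a)$ directly, using only the admissibility identity $\theta(a_1a_2)=\theta(a_1)a_2-a_1\partial(a_2)$ to rewrite $\theta(a y_i)$ and $\theta(a x_i)$. The terms $x_i\otimes\theta(a)y_i$ and $\theta(a)x_i\otimes y_i$ appearing on the right cancel their counterparts from $\Delta_r\theta(a)$, and the difference $(\id\otimes\theta+\theta\otimes\id)\Delta_r(a)-\Delta_r\theta(a)$ collects into $(\id\otimes\fu_{A}(a))\big((\theta\otimes\id-\id\otimes\partial)(r)\big)+(\fu_{A}(a)\otimes\id)\big((\partial\otimes\id-\id\otimes\theta)(r)\big)$, which is precisely the negative of the first displayed hypothesis. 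Hence (ii) holds if and only if that first condition holds.

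For (iii) I would expand $(\partial\otimes\id-\id\otimes\theta)\Delta_r(a)-\Delta_r(\partial a)$, now using that $\partial$ is a derivation and that $\theta$ is admissible. The terms carrying $\partial(a)$ cancel immediately, and grouping what remains yields $(\id\otimes\fu_{A}(a)-\fu_{A}(a)\otimes\id)\big((\partial\otimes\id-\id\otimes\theta)(r)\big)$ — the second displayed hypothesis — plus a leftover $\sum_i x_i\otimes\big(a\,D(y_i)-D(a)\,y_i\big)$, where $D:=\theta+\partial$. The crucial observation is that \emph{adding} the derivation identity $\partial(a_1a_2)=\partial(a_1)a_2+a_1\partial(a_2)$ and the admissibility identity $\theta(a_1a_2)=\theta(a_1)a_2-a_1\partial(a_2)$ gives $D(a_1a_2)=D(a_1)a_2$; by commutativity this forces $a\,D(b)=D(a)\,b$ for all $a,b$, so the leftover vanishes identically. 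Thus (iii) reduces to the second condition, and since the third displayed condition is merely the negative of the second, the two coincide and (iii) follows.

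The only genuinely non-routine step is the one in (iii): spotting the residual terms $\sum_i x_i\otimes(a\,D(y_i)-D(a)\,y_i)$ and discharging them through the identity $a\,D(b)=D(a)\,b$ for $D=\theta+\partial$, which is exactly where the interplay between $\partial$ being a derivation and $\theta$ being admissible is needed. Everything else is careful but routine bookkeeping with $\fu_{A}$, $\partial$, and $\theta$ applied to $r=\sum_i x_i\otimes y_i$, so I would record those expansions compactly rather than term by term.
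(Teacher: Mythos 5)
Your proof is correct. Note first that the paper gives no argument for this lemma at all — it is quoted from \cite{LLB} — so there is no internal proof to compare against; judged on its own merits, your verification is sound and is the natural direct computation. Concretely: your expansion for (ii) is right, the defect in the coderivation identity $\Delta_r\theta=(\id\otimes\theta+\theta\otimes\id)\Delta_r$ being exactly the negative of the first displayed condition; and your reduction of (iii) to the second condition is also right, with the leftover $\sum_i x_i\otimes\big(a\,D(y_i)-D(a)\,y_i\big)$, $D=\partial+\theta$, killed by the identity $a\,D(b)=D(a)\,b$ — which is precisely the identity the paper itself derives from commutativity plus admissibility and uses in the proof of Proposition \ref{pro:ind-YBE}, so this step is fully in the spirit of the ambient text. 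Your two structural observations deserve to be made explicit as well: the three displayed conditions genuinely cannot imply cocommutativity or coassociativity of $\Delta_r$ (take $\partial=\theta=0$, where all three conditions hold vacuously for arbitrary $r$), so the coalgebra axioms must indeed come from the surrounding hypotheses via Lemma \ref{lem:comm-bia}, exactly as happens in the application to Proposition \ref{pro:diff-bia}; and in this commutative setting the second and third displayed conditions are literal negatives of one another, hence equivalent — a redundancy inherited from the more general noncommutative formulation in \cite{LLB}.
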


By Lemmas \ref{lem:comm-bia} and \ref{lem:adim-co}, we get the following proposition.

\begin{pro}\label{pro:diff-bia}
Let $(A, \cdot, \partial, \theta)$ be an admissible differential algebra,
$r\in A\otimes A$ and $\Delta_{r}: A\rightarrow A\otimes A$ defined by Eq. \eqref{comm-cobo}.
\begin{enumerate}
\item If $r$ is a solution of the admissible AYBE in $(A, \cdot, \partial, \theta)$ and
     $\mathfrak{s}(r)$ is $\fu_{A}$-invariant, then $(A, \cdot, \Delta_{r}$, $\partial,
     \theta)$ is a differential infinitesimal bialgebra, which is called a {\rm
     quasi-triangular differential infinitesimal bialgebra} associated with $r$.
\item If $r$ is a skew-symmetric solution of the admissible AYBE in $(A, \cdot,
     \partial, \theta)$ then $(A, \cdot, \Delta_{r}, \partial, \theta)$ is a differential
     infinitesimal bialgebra, which is called a {\rm triangular differential infinitesimal
     bialgebra} associated with $r$.
\end{enumerate}
\end{pro}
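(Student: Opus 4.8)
The plan is to prove Proposition \ref{pro:diff-bia} by assembling the three
prerequisite lemmas already established in the excerpt, treating the proposition
as essentially a bookkeeping consolidation. For part (1), I would verify the three
defining conditions of a differential infinitesimal bialgebra
(Definition \ref{def:diff-bia}) one at a time. Condition (1), that
$(A, \cdot, \Delta_{r})$ is an infinitesimal bialgebra, is exactly the conclusion
of Lemma \ref{lem:comm-bia}(1), whose hypotheses---that $r$ solves the AYBE in
$(A, \cdot)$ and that $\mathfrak{s}(r)$ is $\fu_{A}$-invariant---are precisely the
standing assumptions of part (1). Condition (2), that $(A, \cdot, \partial, \theta)$
is an admissible differential algebra, is given in the hypothesis and requires
nothing. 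Condition (3), that $(A, \Delta_{r}, \theta, \partial)$ is an admissible
codifferential coalgebra, is supplied by Lemma \ref{lem:adim-co}, so the only real
work is to check that the admissible-AYBE hypothesis forces the three displayed
identities appearing in that lemma.

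The crux of the argument, and the step I expect to carry the weight, is verifying
those three identities of Lemma \ref{lem:adim-co} from the definition of a solution
of the admissible AYBE. By definition such an $r$ satisfies
$(\partial\otimes\id-\id\otimes\theta)(r)=0$ and
$(\id\otimes\partial-\theta\otimes\id)(r)=0$. The first displayed identity in
Lemma \ref{lem:adim-co} is then the sum of $(\id\otimes\fu_{A}(a))$ applied to the
second of these and $(\fu_{A}(a)\otimes\id)$ applied to (the negative of) the first,
so it vanishes termwise. The second and third displayed identities are obtained by
applying the operator $(\id\otimes\fu_{A}(a)-\fu_{A}(a)\otimes\id)$ to
$(\partial\otimes\id-\id\otimes\theta)(r)=0$ and to
$(\id\otimes\theta-\partial\otimes\id)(r)=0$ respectively; since each inner expression
is already zero, both identities hold automatically. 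Thus all hypotheses of
Lemma \ref{lem:adim-co} are met and condition (3) follows. Combining the three
verified conditions with Definition \ref{def:diff-bia} yields that
$(A, \cdot, \Delta_{r}, \partial, \theta)$ is a differential infinitesimal bialgebra,
which we name quasi-triangular.

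For part (2), the plan is to reduce to part (1) by observing that a skew-symmetric
$r$ automatically satisfies both required symmetric-part and $\fu_{A}$-invariance
conditions in a degenerate way: when $r=-\tau(r)$ we have $\mathfrak{s}(r)=0$, so
$\mathfrak{s}(r)$ is trivially $\fu_{A}$-invariant, and Lemma \ref{lem:comm-bia}(2)
already records that a skew-symmetric solution of the AYBE gives an infinitesimal
bialgebra. The admissible-AYBE hypothesis again delivers condition (3) through
Lemma \ref{lem:adim-co} exactly as in part (1), using the same two defining identities
of the admissible AYBE---here it is worth noting the remark preceding the proposition
that $\tau(r)$ solves the admissible AYBE whenever $r$ does, which guarantees the
skew-symmetric constraints are compatible with the $\partial,\theta$ conditions.
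Hence $(A, \cdot, \Delta_{r}, \partial, \theta)$ is again a differential infinitesimal
bialgebra, now named triangular. I do not anticipate any genuine obstacle beyond
careful tensor-leg bookkeeping; the proposition is a direct corollary of the two
cited lemmas, and indeed the excerpt announces exactly this, so the proof can be
stated quite compactly.
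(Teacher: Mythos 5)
Your proposal is correct and is exactly the paper's route: the paper derives this proposition with no separate proof, stating only that it follows by combining Lemma \ref{lem:comm-bia} (for the infinitesimal bialgebra structure) with Lemma \ref{lem:adim-co} (for the admissible codifferential coalgebra structure). Your added observation that the three displayed identities of Lemma \ref{lem:adim-co} hold termwise because $(\partial\otimes\id-\id\otimes\theta)(r)=0$ and $(\id\otimes\partial-\theta\otimes\id)(r)=0$ already vanish is precisely the (trivial) verification the paper leaves implicit, and your reduction of part (2) via $\mathfrak{s}(r)=0$ matches Lemma \ref{lem:comm-bia}(2) as intended.
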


Next, we give a construction of solutions of NYBE in the induced Novikov algebra
from solutions of admissible AYBE in an admissible differential algebra.

\begin{pro}\label{pro:ind-YBE}
Let $(A, \cdot, \partial, \theta)$ be an admissible differential algebra, $r=\sum_{i}x_{i}
\otimes y_{i}\in A\otimes A$ and $(A, \diamond_{\q})$, $\q\in\Bbbk$, be the induced
Novikov algebra. Whenever $\q=-\frac{1}{2}$, or $\theta$ is a derivation on $(A, \cdot)$,
we get $(1)$ $r$ is a solution of the NYBE in $(A, \diamond_{\q})$
if $r$ is a solution of the admissible AYBE in $(A, \cdot, \partial, \theta)$;
$(2)$ $\mathfrak{s}(r)$ is invariant if $r$ is a solution of the admissible AYBE in $(A, \cdot,
\partial, \theta)$ and $\mathfrak{s}(r)$ is $\fu_{A}$-invariant.
\end{pro}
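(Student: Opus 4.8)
The plan is to push everything through the single operator $D:=\partial+\q\theta$, for which Eq.~\eqref{ind-nov} reads $a_{1}\diamond_{\q}a_{2}=a_{1}D(a_{2})$, and to exploit the two first-order identities packaged in the admissible AYBE. Writing $r=\sum_{i}x_{i}\otimes y_{i}$, the hypothesis that $r$ solves the admissible AYBE supplies three facts: the associative relation $\mathbf{A}_{r}=0$, together with $\sum_{i}\partial(x_{i})\otimes y_{i}=\sum_{i}x_{i}\otimes\theta(y_{i})$ and $\sum_{i}x_{i}\otimes\partial(y_{i})=\sum_{i}\theta(x_{i})\otimes y_{i}$. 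I would record at the outset the two product rules that do all the work, $\partial(a_{1}a_{2})=\partial(a_{1})a_{2}+a_{1}\partial(a_{2})$ and $\theta(a_{1}a_{2})=\theta(a_{1})a_{2}-a_{1}\partial(a_{2})$; adding them shows that $F:=\partial+\theta$ obeys $F(a_{1}a_{2})=F(a_{1})a_{2}$, whence $F(a_{1})a_{2}=a_{1}F(a_{2})$ by commutativity. Since $\tau(r)$ also solves the admissible AYBE, both transport relations pass to $\mathfrak{s}(r)$, which is what part~$(2)$ needs.

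For part~$(1)$ I would expand $\mathbf{N}_{r}$ into its four cubic summands and turn each into an associative expression via $x\diamond_{\q}y=xD(y)$. The crucial observation is that the transport relations convert a $D$ on one leg of $r$ into $E:=\theta+\q\partial$ on the other leg, i.e. $(\id\otimes D)(r)=(E\otimes\id)(r)$ and $(D\otimes\id)(r)=(\id\otimes E)(r)$. Applying these to the differentiated copy of $r$ in each term rewrites the four summands as $(\id\otimes E\otimes\id)$ acting on $r_{13}r_{12}$ and on $r_{13}r_{23}$, plus $(E\otimes\id\otimes\id)$ and $(\id\otimes\id\otimes E)$ acting on $r_{12}r_{23}$. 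Since $\mathbf{A}_{r}=0$ reads $r_{13}r_{12}+r_{13}r_{23}=r_{12}r_{23}$, everything collapses to the total operator $E\otimes\id\otimes\id+\id\otimes E\otimes\id+\id\otimes\id\otimes E$ applied to $r_{12}r_{23}=\sum_{i,j}x_{i}\otimes y_{i}x_{j}\otimes y_{j}$.

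Evaluating that total-$E$ expression is where the product rules and a second round of transport enter. Using $E(a_{1}a_{2})=E(a_{1})a_{2}+(\q-1)a_{1}\partial(a_{2})$ on the middle factor and transporting the $i$- and $j$-indices, the slot-$1$ placement merges with the main part of the middle placement through $D+E=(1+\q)F$, while the remaining Leibniz term of the middle merges with the slot-$3$ placement with coefficient $\q$. After using $F(a_{1})a_{2}=a_{1}F(a_{2})$ to identify the two surviving tensors, this leaves the clean formula $\mathbf{N}_{r}=(1+2\q)\sum_{i,j}x_{i}\otimes F(y_{i})x_{j}\otimes y_{j}$. This vanishes in either case of the hypothesis: if $\q=-\tfrac{1}{2}$ the scalar $1+2\q$ is zero, while if $\theta$ is a derivation then $\theta(a_{1}a_{2})=\theta(a_{1})a_{2}+a_{1}\theta(a_{2})$ forces $a_{1}F(a_{2})=0$, so that $F(y_{i})x_{j}=x_{j}F(y_{i})=0$.

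Part~$(2)$ runs on the same engine. I would expand $\Phi(a)(\mathfrak{s}(r))$ for $\mathfrak{s}(r)=\sum_{k}u_{k}\otimes v_{k}$ in terms of $D$, then use the $\fu_{A}$-invariance $\sum_{k}(au_{k})\otimes v_{k}=\sum_{k}u_{k}\otimes(av_{k})$ to slide the factor $a$ between the two legs and the transport relations for $\mathfrak{s}(r)$ to move $\partial$ and $\theta$ across slots; the three summands collapse to $\Phi(a)(\mathfrak{s}(r))=(1+2\q)\sum_{k}aF(u_{k})\otimes v_{k}$, which again dies by the same two-case argument (note that part~$(1)$ never uses $\fu_{A}$-invariance, whereas part~$(2)$ does). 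The main obstacle throughout is organizational: keeping the index bookkeeping of the four cubic terms straight while repeatedly transporting. The genuine content is small and concentrated in the identities $(\id\otimes D)(r)=(E\otimes\id)(r)$, $(D\otimes\id)(r)=(\id\otimes E)(r)$ and in $D+E=(1+\q)F$, which together explain why the entire obstruction is precisely the scalar $(1+2\q)$ times a term annihilated exactly when $\theta$ is a derivation.
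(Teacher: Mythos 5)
Your proposal is correct and follows essentially the same route as the paper's proof: both rest on the transport relations coming from the admissible AYBE (your $(\id\otimes D)(r)=(E\otimes\id)(r)$, $(D\otimes\id)(r)=(\id\otimes E)(r)$), the identity $(\partial+\theta)(a_{1})a_{2}=a_{1}(\partial+\theta)(a_{2})$ forced by admissibility plus commutativity, and the reduction of both $\mathbf{N}_{r}$ and $\Phi(a)(\mathfrak{s}(r))$ to $(1+2\q)$ times a tensor that is killed exactly when $\q=-\tfrac12$ or $\theta$ is a derivation. The only differences are cosmetic bookkeeping: you transport all four NYBE summands onto $r_{12}r_{23}$ and collect in the first slot for part $(2)$, whereas the paper transports only two summands and collects in the second slot, yielding the same scalar obstruction.
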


\begin{proof}
$(1)$ If $r$ is a solution of the admissible AYBE in $(A, \cdot, \partial, \theta)$,
we get $\sum_{i}\partial(x_{i})\otimes y_{i}=\sum_{i}x_{i}\otimes\theta(y_{i})$,
$\sum_{i}\theta(x_{i})\otimes y_{i}=\sum_{i}x_{i}\otimes\partial(y_{i})$, and
$\mathbf{A}_{r}=0$. Moreover, since $\theta$ is admissible to $(A, \cdot, \partial)$ and
$(A, \cdot)$ is commutative, for any $a_{1}, a_{2}\in A$, we have
$a_{1}(\partial+\theta)(a_{2})=(\partial+\theta)(a_{1})a_{2}$. Thus, we get
\begin{align*}
\mathbf{N}_{r}
&=\sum_{i,j}\Big(x_{i}\otimes x_{j}\otimes y_{i}(\partial+\q\theta)(y_{j})
+x_{i}\otimes y_{i}(\partial+\q\theta)(x_{j})\otimes y_j\\[-5mm]
&\qquad\quad+x_{i}\otimes x_{j}(\partial+\q\theta)(y_i)\otimes y_j
+x_{i}(\partial+\q\theta)(x_{j})\otimes y_{j}\otimes y_{i}\Big)\\
&=\sum_{i,j}\Big(x_{i}\otimes(\theta+\q\partial)(x_{j})\otimes y_{i}y_{j}
+x_{i}\otimes y_{i}(\partial+\q\theta)(x_{j})\otimes y_{j}\\[-5mm]
&\qquad\quad+x_{i}\otimes x_{j}(\partial+\q\theta)(y_{i})\otimes y_{j}
+x_{i}x_{j}\otimes(\theta+\q\partial)(y_{j})\otimes y_{i}\Big)\\
&=\sum_{i,j}x_{i}\otimes\Big((\theta+\q\partial)(x_{j}y_{i})+y_{i}(\partial+\q\theta)(x_{j})
+x_{j}(\partial+\q\theta)(y_{i})\Big)\otimes y_{j}\\
&=\sum_{i,j}x_{i}\otimes(1+2\q)(\partial+\theta)(x_{j})y_{i}\otimes y_{j}.
\end{align*}
Note that $(\partial+\theta)(x_{j})y_{i}=0$ for any $x_{j}, y_{i}\in A$ if $\theta$
is a derivation, we get that if $\q=-\frac{1}{2}$, or $\theta$ is a derivation,
$\mathbf{N}_{r}=0$, $r$ is a solution of the NYBE in $(A, \diamond_{\q})$.

$(2)$ If $r$ is a solution of the admissible AYBE in $(A, \cdot, \partial, \theta)$ and
$\mathfrak{s}(r)$ is $\fu_{A}$-invariant, then $\tau(r)$ is also a solution of the admissible
AYBE in $(A, \cdot, \partial, \theta)$. If denote $r+\tau(r)=\sum_{i}\bar{x}_{i}
\otimes\bar{y}_{i}$, then we also have $\sum_{i}\partial(\bar{x}_{i})\otimes\bar{y}_{i}
=\sum_{i}\bar{x}_{i}\otimes\theta(\bar{y}_{i})$ and $\sum_{i}\theta(\bar{x}_{i})
\otimes\bar{y}_{i}=\sum_{i}\bar{x}_{i}\otimes\partial(\bar{y}_{i})$.
Thus, for any $a\in A$, we get
\begin{align*}
&\;\big(\fl_{A}(a)\otimes\id+\id\otimes(\fl_{A}+\fr_{A})(a)\big)(r+\tau(r))\\
=&\;\sum_{i}(a\diamond_{\q}\bar{x}_{i})\otimes\bar{y}_{i}
+\bar{x}_{i}\otimes(a\diamond_{\q}\bar{y}_{i})
+\bar{x}_{i}\otimes(\bar{y}_{i}\diamond_{\q}a)\\[-2mm]
=&\;\sum_{i}(a(\partial+\q\theta)(\bar{x}_{i}))\otimes\bar{y}_{i}
+\bar{x}_{i}\otimes(a(\partial+\q\theta)(\bar{y}_{i}))
+\bar{x}_{i}\otimes(\bar{y}_{i}(\partial+\q\theta)(a))\\[-2mm]
%=&\;\sum_{i}a\bar{x}_{i}\otimes\theta(\bar{y}_{i})+a\bar{x}_{i}\otimes\q\partial(\bar{y}_{i})
%+\bar{x}_{i}\otimes\big(a\partial(\bar{y}_{i})
%+\q\bar{a}\theta(\bar{y}_{i})+\partial(a)\bar{y}_{i}+\q\theta(a)\bar{y}_{i}\big)\\[-2mm]
=&\;\sum_{i}\bar{x}_{i}\otimes\Big(\theta(a\bar{y}_{i})+\q\partial(a\bar{y}_{i})
+a\partial(\bar{y}_{i})+\q a\theta(\bar{y}_{i})+\partial(a)\bar{y}_{i}
+\q\theta(a)\bar{y}_{i}\Big)\\[-2mm]
=&\;\sum_{i}\bar{x}_{i}\otimes(1+2\q)a(\partial+\theta)(\bar{y}_{i}),
\end{align*}
since $\mathfrak{s}(r)$ is $\fu_{A}$-invariant, i.e., $\sum_{i}a\bar{x}_{i}\otimes\bar{y}_{i}
=\sum_{i}\bar{x}_{i}\otimes a\bar{y}_{i}$ for any $a\in A$. Note that
$a(\partial+\theta)(\bar{y}_{i})=0$ for any $a, y_{i}\in A$ if $\theta$ is a derivation,
we get that if $\q=-\frac{1}{2}$, or $\theta$ is a derivation, $\mathfrak{s}(r)$ is invariant
for the Novikov algebra $(A, \diamond_{\q})$.
\end{proof}

By $(1)$ in Proposition \ref{pro:ind-YBE}, we obtain the following corollary directly.

\begin{cor}[\cite{HBG1}]\label{cor:ind-s-YBE}
Let $(A, \cdot, \partial, \theta)$ be an admissible differential algebra, $r\in A\otimes A$
and $(A, \diamond_{\q})$, $\q\in\Bbbk$, be the induced Novikov algebra. Whenever
$\q=-\frac{1}{2}$, or $\theta$ is a derivation on $(A, \cdot)$,
we get $r$ is a skew-symmetric solution of the NYBE in $(A, \diamond_{\q})$
if $r$ is a skew-symmetric solution of the admissible AYBE in $(A, \cdot, \partial, \theta)$.
\end{cor}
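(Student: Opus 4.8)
The plan is to deduce this as an immediate consequence of part $(1)$ of Proposition \ref{pro:ind-YBE}, since the only new ingredient compared to that proposition is the skew-symmetry hypothesis. The key observation I would make first is that skew-symmetry of $r$, namely $r=-\tau(r)$, is a property of $r$ as an element of $A\otimes A$ together with the flip map $\tau$, and is entirely independent of which multiplication we equip the underlying vector space $A$ with. In particular, if $r$ is skew-symmetric when $A$ carries the commutative associative product $\cdot$, then the very same tensor $r$ is skew-symmetric when $A$ is viewed as the induced Novikov algebra $(A,\diamond_{\q})$. So skew-symmetry transfers for free along the construction $\cdot\rightsquigarrow\diamond_{\q}$.

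Next I would invoke Proposition \ref{pro:ind-YBE}$(1)$ under the standing hypothesis that either $\q=-\tfrac12$ or $\theta$ is a derivation on $(A,\cdot)$: in either case, any solution $r$ of the admissible AYBE in $(A,\cdot,\partial,\theta)$ is automatically a solution of the NYBE in the induced Novikov algebra, i.e.\ $\mathbf{N}_{r}=0$. This is exactly the content of the computation carried out there, where $\mathbf{N}_{r}$ collapses to a multiple of $(1+2\q)(\partial+\theta)(x_{j})y_{i}$, which vanishes precisely in the two listed cases. Combining this with the previous paragraph, a skew-symmetric $r$ that solves the admissible AYBE is both skew-symmetric and a solution of the NYBE in $(A,\diamond_{\q})$, which is the assertion.

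There is essentially no obstacle here: the proof is a one-line application of Proposition \ref{pro:ind-YBE}$(1)$ together with the triviality that skew-symmetry is preserved. The only thing worth stating explicitly is that one does not need to re-run the AYBE-to-NYBE computation, nor does one need to check that the skew-symmetric part behaves well, because the conclusion of the corollary is strictly weaker than feeding the same $r$ through part $(1)$. Accordingly I would keep the argument to a sentence, writing simply that the claim follows from Proposition \ref{pro:ind-YBE}$(1)$ since $r=-\tau(r)$ does not depend on the product on $A$.
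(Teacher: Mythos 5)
Your proposal is correct and matches the paper exactly: the paper states that the corollary follows directly from Proposition \ref{pro:ind-YBE}$(1)$, which is precisely your argument. Your added remark that skew-symmetry of $r$ is independent of the product on $A$ and thus transfers for free is the (implicit) justification the paper relies on as well.
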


Now, we introduce the definition of factorizable differential infinitesimal bialgebra.
Let $(A, \cdot)$ be a commutative associative algebra. For any $r\in A\otimes A$,
we can also define linear map $r^{\sharp}: A^{\ast}\rightarrow A$ by
$\langle r^{\sharp}(\xi_{1}),\; \xi_{2}\rangle=\langle\xi_{1}\otimes\xi_{2},\; r\rangle$,
linear map $r^{\natural}: A^{\ast}\rightarrow A$ by
$\langle\xi_{1},\; r^{\natural}(\xi_{2})\rangle=-\langle\xi_{1}\otimes\xi_{2},\; r\rangle$,
for any $\xi_{1}, \xi_{2}\in A^{\ast}$, and linear map $\mathcal{I}=r^{\sharp}-r^{\natural}:
A^{\ast}\rightarrow A$.

\begin{defi}\label{def:fact-diffbia}
Let $(A, \cdot, \partial, \theta)$ be an admissible differential algebra, $r\in A\otimes A$,
and $(A, \cdot, \Delta_{r}, \partial, \theta)$ be a quasi-triangular differential
infinitesimal bialgebra associated with $r$. If $\mathcal{I}=r^{\sharp}-r^{\natural}:
A^{\ast}\rightarrow A$ is a bijection and $\mathcal{I}\theta^{\ast}=\partial\mathcal{I}$,
then $(A, \cdot, \Delta_{r}, \partial, \theta)$ is called a {\rm factorizable
differential infinitesimal bialgebra}.
\end{defi}

Similar to Proposition \ref{pro:fact}, we have

\begin{pro}\label{pro:fact-diff}
Let $(A, \cdot, \partial, \theta)$ be an admissible differential algebra and $r\in A\otimes A$.
Assume that the differential infinitesimal bialgebra $(A, \cdot, \Delta_{r}, \partial, \theta)$
is factorizable. Then any $a\in A$ has an unique decomposition $a=a_{+}+a_{-}$,
where $a_{+}\in\Img(r^{\sharp})$ and $a_{-}\in\Img(r^{\natural})$.
\end{pro}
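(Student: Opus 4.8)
The plan is to follow the ``moreover'' part of the proof of Proposition \ref{pro:fact} essentially verbatim, since the conclusion depends only on $\mathcal{I}=r^{\sharp}-r^{\natural}$ being a bijection. By Definition \ref{def:fact-diffbia}, factorizability of $(A,\cdot,\Delta_{r},\partial,\theta)$ includes precisely this hypothesis, so the extra compatibility $\mathcal{I}\theta^{\ast}=\partial\mathcal{I}$ and the whole differential/infinitesimal-bialgebra structure are irrelevant to the decomposition itself. I would state this reduction at the outset, so that the argument is purely about the linear isomorphism $\mathcal{I}\colon A^{\ast}\to A$ built from the two contractions $r^{\sharp}$ and $r^{\natural}$.

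For existence, first I would fix $a\in A$ and set $\xi:=\mathcal{I}^{-1}(a)\in A^{\ast}$, which is well defined because $\mathcal{I}$ is a linear isomorphism. Then
\begin{align*}
a=\mathcal{I}(\xi)=(r^{\sharp}-r^{\natural})(\xi)=r^{\sharp}(\xi)-r^{\natural}(\xi),
\end{align*}
so that $a=a_{+}+a_{-}$ with $a_{+}:=r^{\sharp}(\xi)\in\Img(r^{\sharp})$ and $a_{-}:=-r^{\natural}(\xi)\in\Img(r^{\natural})$. This already exhibits the required decomposition, with both summands landing in the prescribed images.

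For uniqueness, I would observe that the pair $(a_{+},a_{-})$ produced above is the one determined by the common preimage $\xi=\mathcal{I}^{-1}(a)$, and that this preimage is unique because $\mathcal{I}$ is injective: if $r^{\sharp}(\xi)-r^{\natural}(\xi)=r^{\sharp}(\eta)-r^{\natural}(\eta)$, then $\mathcal{I}(\xi-\eta)=0$, whence $\xi=\eta$ and $a_{+}=r^{\sharp}(\xi)$, $a_{-}=-r^{\natural}(\xi)$ are pinned down by $a$. The one point that genuinely needs care---and the main (mild) obstacle---is phrasing the uniqueness correctly: since $\Img(r^{\sharp})$ and $\Img(r^{\natural})$ need not be complementary subspaces of $A$ (they can even coincide with all of $A$ when $\mathcal{I}$ is still invertible), I would make explicit, exactly as in Proposition \ref{pro:fact}, that uniqueness refers to the canonical decomposition arising from a single $\xi=\mathcal{I}^{-1}(a)$, rather than to an abstract direct-sum statement $A=\Img(r^{\sharp})\oplus\Img(r^{\natural})$.
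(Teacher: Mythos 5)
Your proposal is correct and follows exactly the route the paper intends: Proposition \ref{pro:fact-diff} is proved in the paper simply by invoking the ``moreover'' part of the proof of Proposition \ref{pro:fact}, namely writing $a=r^{\sharp}(\mathcal{I}^{-1}(a))-r^{\natural}(\mathcal{I}^{-1}(a))$ and deducing uniqueness from the bijectivity of $\mathcal{I}$. Your added caveat that uniqueness refers to the canonical decomposition via $\xi=\mathcal{I}^{-1}(a)$, rather than to a direct-sum statement $A=\Img(r^{\sharp})\oplus\Img(r^{\natural})$, is a careful and accurate clarification of a point the paper leaves implicit.
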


Let $(A, \cdot, \Delta, \partial, \theta)$ be a differential infinitesimal bialgebra.
Denote by $\bar{\Delta}: A^{\ast}\rightarrow A^{\ast}\otimes A^{\ast}$ the dual
comultiplication of multiplication $``\cdot"$, and $\circ: A^{\ast}\otimes A^{\ast}
\rightarrow A^{\ast}$ the dual multiplication of comultiplication $``\Delta"$.
Then by Theorem 4.5 in \cite{LLB}, we get $(A^{\ast}, \circ, -\bar{\Delta},
\theta^{\ast}, \partial^{\ast})$ is also a differential infinitesimal bialgebra.
Moreover, let $\{e_{1}, e_{2}, \cdots, e_{n}\}$ be a basis of $A$, $\{f_{1}, f_{2}, \cdots,
f_{n}\}$ be its dual basis in $A^{\ast}$ and $\tilde{r}=\sum_{i=1}^{n}e_{i}\otimes f_{i}
\in A\otimes A^{\ast}\subset(A\oplus A^{\ast})\otimes(A\oplus A^{\ast})$.
Then, we get $(A\oplus A^{\ast}, \odot, \partial\oplus\theta^{\ast},
\theta\oplus\partial^{\ast})$ is a differential infinitesimal algebra, where
$$
(a_{1}, \xi_{1})\odot(a_{2}, \xi_{2})=\big(a_{1}a_{2}+\fu_{A^{\ast}}^{\ast}(\xi_{1})(a_{2})
+\fu_{A^{\ast}}^{\ast}(\xi_{2})(a_{1}),\ \ \xi_{1}\circ \xi_{2}
+\fu_{A}^{\ast}(a_{1})(\xi_{2})+\fu_{A}^{\ast}(a_{2})(\xi_{1})\big),
$$
for any $a_{1}, a_{2}\in A$, $\xi_{1}, \xi_{2}\in A^{\ast}$.
For the details see \cite{LLB}. Similar to Proposition \ref{pro:doufact}, we can show
that this type of differential infinitesimal bialgebra is always factorizable.

\begin{pro}\label{pro:doufactdiff}
Let $(A, \cdot, \Delta, \partial, \theta)$ be a differential infinitesimal bialgebra.
Using the notation above, we get that the differential infinitesimal bialgebra
$(A\oplus A^{\ast}, \odot, \Delta_{\tilde{r}}, \partial\oplus\theta^{\ast},
\theta\oplus\partial^{\ast})$ is a factorizable, which is called the {\rm
differential double of $(A, \cdot, \Delta, \partial, \theta)$}.
\end{pro}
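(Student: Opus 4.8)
The plan is to mirror the proof of Proposition~\ref{pro:doufact}, with $\tilde{r}=\sum_{i=1}^{n}e_{i}\otimes f_{i}$ playing the role of the canonical element of the double. By Definition~\ref{def:fact-diffbia} it suffices to establish three things: that $\tilde{r}$ solves the admissible AYBE in $(A\oplus A^{\ast},\odot,\partial\oplus\theta^{\ast},\theta\oplus\partial^{\ast})$ with $\mathfrak{s}(\tilde{r})$ being $\fu_{A\oplus A^{\ast}}$-invariant (so that Proposition~\ref{pro:diff-bia} yields a quasi-triangular differential infinitesimal bialgebra structure $\Delta_{\tilde{r}}$); that $\mathcal{I}=\tilde{r}^{\sharp}-\tilde{r}^{\natural}$ is bijective; and that $\mathcal{I}(\theta\oplus\partial^{\ast})^{\ast}=(\partial\oplus\theta^{\ast})\mathcal{I}$.

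First I would record the associated maps. Exactly as in Proposition~\ref{pro:doufact}, since $\tilde{r}\in A\otimes A^{\ast}$ encodes the identity pairing, one finds $\tilde{r}^{\sharp}(\xi,a)=(0,\xi)$ and $\tilde{r}^{\natural}(\xi,a)=(-a,0)$, hence $\mathcal{I}(\xi,a)=(a,\xi)$, which is a linear isomorphism; this settles bijectivity. The compatibility condition is then a short transpose computation: dualizing gives $(\theta\oplus\partial^{\ast})^{\ast}(\eta,b)=(\theta^{\ast}(\eta),\partial(b))$, so both $\mathcal{I}(\theta\oplus\partial^{\ast})^{\ast}(\eta,b)$ and $(\partial\oplus\theta^{\ast})\mathcal{I}(\eta,b)$ equal $(\partial(b),\theta^{\ast}(\eta))$.

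Next I would dispatch the two auxiliary admissibility conditions $((\partial\oplus\theta^{\ast})\otimes\id-\id\otimes(\theta\oplus\partial^{\ast}))(\tilde{r})=0$ and $(\id\otimes(\partial\oplus\theta^{\ast})-(\theta\oplus\partial^{\ast})\otimes\id)(\tilde{r})=0$. Since $\partial\oplus\theta^{\ast}$ and $\theta\oplus\partial^{\ast}$ preserve the $A$- and $A^{\ast}$-summands, these reduce in the $A\otimes A^{\ast}$ component to the tautological identities $\sum_{i}\partial(e_{i})\otimes f_{i}=\sum_{i}e_{i}\otimes\partial^{\ast}(f_{i})$ and $\sum_{i}\theta(e_{i})\otimes f_{i}=\sum_{i}e_{i}\otimes\theta^{\ast}(f_{i})$, valid for any operator and its transpose; so both hold automatically. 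For the $\fu_{A\oplus A^{\ast}}$-invariance of $\mathfrak{s}(\tilde{r})=\tfrac{1}{2}(\tilde{r}+\tau(\tilde{r}))$ I would expand $(\id\otimes\fu_{A\oplus A^{\ast}}(x)-\fu_{A\oplus A^{\ast}}(x)\otimes\id)(\tilde{r}+\tau(\tilde{r}))$ directly from the formula for $\odot$, testing on $x=e_{k}$ and $x=f_{k}$; this is the exact analogue of the invariance computation in the proof of Proposition~\ref{pro:double}.

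The main obstacle is the AYBE $\mathbf{A}_{\tilde{r}}=\tilde{r}_{13}\tilde{r}_{12}+\tilde{r}_{13}\tilde{r}_{23}-\tilde{r}_{12}\tilde{r}_{23}=0$ in $(A\oplus A^{\ast},\odot)$. I would verify it by pairing $\mathbf{A}_{\tilde{r}}$ against a triple $(e_{s},f_{t})\otimes(e_{k},f_{l})\otimes(e_{p},f_{q})$ and expanding each term through the explicit product $\odot$, in direct parallel to the NYBE computation in the proof of Proposition~\ref{pro:double}; the terms produced by $\fu_{A}^{\ast}$ and $\fu_{A^{\ast}}^{\ast}$ should cancel once the infinitesimal bialgebra compatibility~\eqref{comm-bia} together with associativity and coassociativity are invoked, leaving $0$. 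With $\mathbf{A}_{\tilde{r}}=0$ and the admissibility conditions in hand, $\tilde{r}$ is a solution of the admissible AYBE, so Proposition~\ref{pro:diff-bia} gives the quasi-triangular differential infinitesimal bialgebra $(A\oplus A^{\ast},\odot,\Delta_{\tilde{r}},\partial\oplus\theta^{\ast},\theta\oplus\partial^{\ast})$; combined with the bijectivity of $\mathcal{I}$ and the identity $\mathcal{I}(\theta\oplus\partial^{\ast})^{\ast}=(\partial\oplus\theta^{\ast})\mathcal{I}$, this proves factorizability.
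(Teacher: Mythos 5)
Your proposal is correct and is essentially the paper's own argument carried out in full: the paper states this proposition without a written proof, offering only the remark that it can be shown "similar to Proposition \ref{pro:doufact}" (with the double construction itself imported from the reference for Lemma \ref{lem:adim-co}). Your fleshed-out version does exactly that — computing $\tilde{r}^{\sharp}(\xi,a)=(0,\xi)$, $\tilde{r}^{\natural}(\xi,a)=(-a,0)$, hence $\mathcal{I}(\xi,a)=(a,\xi)$ bijective, verifying the AYBE and the $\fu_{A\oplus A^{\ast}}$-invariance of $\mathfrak{s}(\tilde{r})$ by the same pairing computation as in Propositions \ref{pro:double} and \ref{pro:doufact}, reducing the two admissibility identities to the tautological relations between an operator and its transpose, and checking the one genuinely new condition $\mathcal{I}(\theta\oplus\partial^{\ast})^{\ast}=(\partial\oplus\theta^{\ast})\mathcal{I}$ required by Definition \ref{def:fact-diffbia} — so it matches the intended proof.
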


Now, we consider the quasi-triangular, triangular and factorizable Novikov bialgebra
induced by the differential infinitesimal bialgebra. We get the main conclusion
of this section.

\begin{thm}\label{thm:indu-spbia}
Let $(A, \cdot, \partial, \theta)$ be an admissible differential algebra. Suppose
$(A, \cdot, \Delta, \partial, \theta)$ is a differential infinitesimal bialgebra
and $(A, \diamond_{\q}, \delta_{\q})$ is the Novikov bialgebra induced by
$(A, \cdot, \Delta, \partial, \theta)$. Then, if $\q=-\frac{1}{2}$, or $\theta$ is a
derivation on $(A, \cdot)$, we have
\begin{enumerate}\itemsep=0pt
\item $(A, \diamond_{\q}, \delta_{\q})$ is quasi-triangular if
     $(A, \cdot, \Delta, \partial, \theta)$ is quasi-triangular;
\item $(A, \diamond_{\q}, \delta_{\q})$ is triangular if
     $(A, \cdot, \Delta, \partial, \theta)$ is triangular;
\item $(A, \diamond_{\q}, \delta_{\q})$ is factorizable if
     $(A, \cdot, \Delta, \partial, \theta)$ is factorizable.
\end{enumerate}
\end{thm}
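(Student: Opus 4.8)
The plan is to reduce all three statements to properties of the single element $r\in A\otimes A$ that witnesses (quasi-)triangularity of $(A, \cdot, \Delta, \partial, \theta)$, using the transfer results of Proposition \ref{pro:ind-YBE} and Corollary \ref{cor:ind-s-YBE}. The crucial observation, common to all three parts, is that the induced Novikov coproduct $\delta_{\q}=(\id\otimes(\theta+\q\partial))\Delta_{r}$ of Eq. \eqref{ind-conov} coincides with the coboundary Novikov coproduct $\delta_{r}$ of Eq. \eqref{cobnov} attached to the same $r$. Once this identity is in hand, each part follows by feeding the appropriate hypothesis on $r$ through the machinery already developed.

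For part (1), by Proposition \ref{pro:diff-bia} quasi-triangularity supplies $r=\sum_{i}x_{i}\otimes y_{i}$ with $\Delta=\Delta_{r}$ given by Eq. \eqref{comm-cobo}, with $r$ a solution of the admissible AYBE and $\mathfrak{s}(r)$ being $\fu_{A}$-invariant. The main step is then to verify $\delta_{\q}=\delta_{r}$ directly. Expanding $\delta_{\q}(a)=\sum_{i}x_{i}\otimes(\theta+\q\partial)(ay_{i})-(ax_{i})\otimes(\theta+\q\partial)(y_{i})$ and, using Eq. \eqref{ind-nov}, expanding $\delta_{r}(a)=-\sum_{i}(a(\partial+\q\theta)(x_{i}))\otimes y_{i}+x_{i}\otimes(a(\partial+\q\theta)(y_{i}))+x_{i}\otimes(y_{i}(\partial+\q\theta)(a))$, I would match the two expressions term by term. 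The admissible-AYBE relations $\sum_{i}\partial(x_{i})\otimes y_{i}=\sum_{i}x_{i}\otimes\theta(y_{i})$ and $\sum_{i}\theta(x_{i})\otimes y_{i}=\sum_{i}x_{i}\otimes\partial(y_{i})$ convert the terms in which $a$ multiplies a differentiated generator into the single term $-(ax_{i})\otimes(\theta+\q\partial)(y_{i})$; the admissibility relation $\theta(a_{1}a_{2})=\theta(a_{1})a_{2}-a_{1}\partial(a_{2})$, the Leibniz rule for $\partial$, and the identity $a(\partial+\theta)(b)=(\partial+\theta)(a)b$ from the proof of Proposition \ref{pro:ind-YBE} then collapse the remaining terms to $(1+2\q)\sum_{i}x_{i}\otimes(\partial+\theta)(a)y_{i}$. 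This residual term vanishes exactly when $\q=-\frac{1}{2}$ or $\theta$ is a derivation, which is where the standing hypothesis is consumed; this term-matching is the main obstacle of the whole proof. With $\delta_{\q}=\delta_{r}$ established, Proposition \ref{pro:ind-YBE} gives that $r$ is a solution of the NYBE in $(A, \diamond_{\q})$ and that $\mathfrak{s}(r)$ is invariant, so Proposition \ref{pro:inv-bia} shows that $(A, \diamond_{\q}, \delta_{\q})=(A, \diamond_{\q}, \delta_{r})$ is a quasi-triangular Novikov bialgebra.

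Parts (2) and (3) then come essentially for free. For (2), triangularity means $r$ is skew-symmetric; the identity $\delta_{\q}=\delta_{r}$ still holds, Corollary \ref{cor:ind-s-YBE} upgrades $r$ to a skew-symmetric solution of the NYBE in $(A, \diamond_{\q})$, and Proposition \ref{pro:tri-bialg} yields a triangular Novikov bialgebra. For (3), the maps $r^{\sharp}, r^{\natural}$ and hence $\mathcal{I}=r^{\sharp}-r^{\natural}$ are built solely from $r$ and the canonical pairing, so they are literally the same operator in the infinitesimal and the Novikov settings. Factorizability of $(A, \cdot, \Delta_{r}, \partial, \theta)$ in the sense of Definition \ref{def:fact-diffbia} includes that $\mathcal{I}$ is a bijection, which is exactly the defining condition (Definition \ref{def:fact}) for the quasi-triangular Novikov bialgebra produced in part (1) to be factorizable. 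I expect no additional difficulty here beyond part (1), since the extra constraint $\mathcal{I}\theta^{\ast}=\partial\mathcal{I}$ imposed in Definition \ref{def:fact-diffbia} is not required on the Novikov side.
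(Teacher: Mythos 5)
Your proposal is correct and follows essentially the same route as the paper's proof: parts (1) and (2) are obtained from Proposition \ref{pro:ind-YBE} and Corollary \ref{cor:ind-s-YBE}, and part (3) from the observation that $\mathcal{I}=r^{\sharp}-r^{\natural}$ is literally the same map in both settings, with the extra condition $\mathcal{I}\theta^{\ast}=\partial\mathcal{I}$ of Definition \ref{def:fact-diffbia} indeed not needed on the Novikov side. If anything, your write-up is more complete than the paper's own two-line argument, since you explicitly verify the identification $\delta_{\q}=\delta_{r}$ --- with the correct residual term $(1+2\q)\sum_{i}x_{i}\otimes(\partial+\theta)(a)y_{i}$ vanishing exactly under the standing hypothesis --- a point the paper only asserts implicitly in the ``one can check the following diagram is commutative'' remark after the theorem.
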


\begin{proof}
First, $(1)$ and $(2)$ follow from Proposition \ref{pro:ind-YBE} and Corollary
\ref{cor:ind-s-YBE}. Second, it is easy to see that the map
$\mathcal{I}=r^{\sharp}-r^{\natural}: A^{\ast}\rightarrow A$ for the induced
Novikov bialgebra $(A, \diamond_{\q}, \delta_{\q})$ is an isomorphism of vector
spaces if $\mathcal{I}$ is an isomorphism for the differential infinitesimal bialgebra
$(A, \cdot, \Delta, \partial, \theta)$. Thus we get $(3)$.
\end{proof}

Let $(A, \cdot, \partial, \theta)$ be an admissible differential algebra and $r\in A\otimes A$.
If $\q=-\frac{1}{2}$, or $\theta$ is a derivation on $(A, \cdot)$,
one can check the following diagram is commutative:
$$\xymatrix@C=3cm@R=0.6cm{
\txt{$r$ \\ {\tiny a solution of the admissible AYBE}\\ {\tiny in
$(A, \cdot, \partial, \theta)$ of type ($\star$)}} \ar[d]_{{\rm Pro.}~\ref{pro:ind-YBE}}
^{{\rm Cor.}~\ref{cor:ind-s-YBE}} \ar[r]^{{\rm Pro.}~\ref{pro:diff-bia}} &
\txt{$(A, \cdot, \Delta_{r}, \partial, \theta)$ \\ {\tiny a differential infinitesimal
bialgebra}} \ar[d]^{{\rm Thm.}~\ref{thm:ind-novbia}} \\
\txt{$r$ \\ {\tiny a solution of the NYBE}\\ {\tiny in $(A, \diamond_{\q})$ of type
($\star$)}} \ar[r]^{{\rm Pro.}~\ref{pro:tri-bialg},\quad {\rm Pro.}~\ref{pro:inv-bia}} &
\txt{$(A, \diamond_{\q}, \delta_{\q})=(A, \diamond_{\q}, \delta_{r})$ \\
{\tiny the induced Novikov bialgebra}}}
$$
where type ($\star$) means $(1)$ $\mathfrak{s}(r)$ is invariant, or $(2)$ $r$ is
skew-symmetric, or $(3)$ $\mathfrak{s}(r)$ is invariant and $\mathcal{I}=r^{\sharp}
-r^{\natural}$ is an isomorphism of vector spaces.

\begin{ex}\label{ex:ind-double}
Consider the 3-dimensional admissible differential algebra $(A=\Bbbk\{e_{1}, e_{2}, e_{3}\},
\cdot, \partial, \theta)$, where the multiplication $``\cdot"$ is given by $e_{1}e_{1}=e_{2}$,
$e_{1}e_{2}=e_{3}=e_{2}e_{1}$, the derivation $\partial$ is given by $\partial(e_{1})=e_{2}$,
$\partial(e_{2})=2e_{3}$, and $\theta=-\partial$. Let $r=e_{2}\otimes e_{3}
-e_{3}\otimes e_{2}$. Then, one can check that $r$ is a skew-symmetric solution
of the admissible AYBE in $(A, \cdot, \partial, \theta)$. Thus, by Proposition
\ref{pro:diff-bia}, we get a differential infinitesimal bialgebra $(A, \cdot,
\Delta_{r}, \partial, \theta)$, where the comultiplication $\Delta_{r}: A\rightarrow
A\otimes A$ is given by $\Delta_{r}(e_{1})=-2e_{3}\otimes e_{3}$, $\Delta_{r}(e_{2})=0
=\Delta_{r}(e_{3})$. Therefore, by Theorem \ref{thm:ind-novbia}, we get a Novikov
bialgebra $(A, \diamond_{\q}, \delta_{\q})$, where the comultiplication
$\delta_{\q}: A\rightarrow A\otimes A$ is trivial, i.e., $\delta_{\q}=0$.

On the other hand, by Proposition \ref{pro:ind-nov},
we get a Novikov algebra $(A, \diamond_{\q})$, where the binary operation $\diamond_{\q}$
is given by $e_{1}\diamond_{\q}e_{1}=(1-\q)e_{3}$, otherwise it is zero.
One can check that $r=e_{2}\otimes e_{3}-e_{3}\otimes e_{2}$ is a skew-symmetric solution
of the NYBE in $(A, \diamond_{\q})$. By Proposition \ref{pro:tri-bialg}, we get a
Novikov bialgebra $(A, \diamond_{\q}, \delta_{r})$. By direct calculation, we obtain
$\delta_{r}=0=\delta_{\q}$, and so that, the diagram above is commutative.
\end{ex}

\bigskip
%%%%%%%%%%%%%%%%%%%%%%%%%%%%%%%%%%%%%%%%%%%%%%%%%%%%%%%%%%%%%%%%%%%%%%%%%%%%%%%%%
%    section  5   Factorizable Novikov bialgebras to
%%%%%%%%%%%%%%%%%%%%%%%%%%%%%%%%%%%%%%%%%%%%%%%%%%%%%%%%%%%%%%%%%%%%%%%%%%%%%%%%%%%%%%
\section{Quasi-triangular Novikov bialgebras to quasi-triangular Lie bialgebras}
\label{sec:factlie}

In \cite{LS}, Lang and Sheng have studied quasi-triangular Lie bialgebras and
factorizable Lie bialgebras. Recently, Hong, Bai and Guo provide a method for
constructing Lie bialgebras using Novikov bialgebras and quadratic right Novikov
algebras \cite{HBG}. In this section, we consider the quasi-triangular Lie bialgebras
from this construction. Let us first review some basic facts of Lie bialgebras.
Let $(\g, [-,-])$ be a Lie algebra, i.e., $[g_{1}, g_{2}]=-[g_{2}, g_{1}]$
and $[[g_{1}, g_{2}], g_{3}]+[[g_{2}, g_{3}], g_{1}]+[[g_{3}, g_{1}], g_{2}]=0$
for any $g_{1}, g_{2}, g_{3}\in\g$. A {\it Lie coalgebra} $(\g, \tilde{\Delta})$
is a linear vector $\g$ with a linear map $\tilde{\Delta}: \g\rightarrow\g\otimes\g$,
such that $\tau\tilde{\Delta}=-\tilde{\Delta}$ and $(\id\otimes\tilde{\Delta})
\tilde{\Delta}-(\tau\otimes\id)(\id\otimes\tilde{\Delta})\tilde{\Delta}
=(\tilde{\Delta}\otimes\id)\tilde{\Delta}$. One can check that $(\g, \tilde{\Delta})$
is a Lie coalgebra if and only if $(\g^{\ast}, \tilde{\Delta}^{\ast})$ is a Lie algebra.

A {\it Lie bialgebra} is a triple $(\g, [-,-], \tilde{\Delta})$ such that $(\g, [-,-])$ is a
Lie algebra, $(\g, \tilde{\Delta})$ is a Lie coalgebra, and the following
compatibility condition holds:
$$
\tilde{\Delta}([g_{1}, g_{2}])=(\ad_{\g}(g_{1})\otimes\id+\id\otimes\ad_{\g}(g_{1}))
(\tilde{\Delta}(g_{2}))-(\ad_{\g}(g_{2})\otimes\id+\id\otimes\ad_{\g}(g_{2}))
(\tilde{\Delta}(g_{1})),
$$
where $\ad_{\g}(g_{1})(g_{2})=[g_{1}, g_{2}]$, for all $g_{1}, g_{2}\in\g$.
A Lie bialgebra $(\g, [-,-], \tilde{\Delta})$ is called {\it coboundary} if there exists a
$r\in\g\otimes\g$ such that $\tilde{\Delta}=\tilde{\Delta}_{r}$, where
\begin{align}
\tilde{\Delta}_{r}(g)=(\id\otimes\ad_{\g}(g)+\ad_{\g}(g)\otimes\id)(r), \label{lie-cobo}
\end{align}
for any $g\in\g$. Let $(\g, [-,-])$ be a Lie algebra. Recall that an element
$r=\sum_{i}x_{i}\otimes y_{i}\in\g\otimes\g$ is said to be {\it $\ad_{\g}$-invariant} if
$(\id\otimes\ad_{\g}(g)+\ad_{\g}(g)\otimes\id)(r)=0$. The equation
$$
\mathbf{C}_{r}:=[r_{12}, r_{13}]+[r_{13}, r_{23}]+[r_{12}, r_{23}]=0
$$
is called the {\it classical Yang-Baxter equation (or, CYBE)} in $(\g, [-,-])$,
where $[r_{12}, r_{13}]=\sum_{i,j}[x_{i}, x_{j}]\otimes y_{i}\otimes y_{j}$,
$[r_{13}, r_{23}]=\sum_{i,j}x_{i}\otimes x_{j}\otimes[y_{i}, y_{j}]$ and
$[r_{12}, r_{23}]=\sum_{i,j}x_{i}\otimes[y_{i}, x_{j}]\otimes y_{j}$.

\begin{pro}[\cite{RS,LS}]\label{pro:lie-bia}
Let $(\g, [-,-])$ be a Lie algebra, $r\in\g\otimes\g$ and $\tilde{\Delta}_{r}:
\g\rightarrow\g\otimes\g$ defined by Eq. \eqref{lie-cobo}.
\begin{enumerate}
\item If $r$ is a solution of the CYBE in $(\g, [-,-])$ and $\mathfrak{s}(r)$ is
     $\ad_{\g}$-invariant, then $(\g, [-,-], \tilde{\Delta}_{r})$ is a Lie bialgebra,
     which is called a {\rm quasi-triangular Lie bialgebra} associated with $r$.
\item If $r$ is a skew-symmetric solution of the CYBE in $(\g, [-,-])$ then $(\g, [-,-],
     \tilde{\Delta}_{r})$ is a Lie bialgebra, which is called a {\rm triangular Lie
     bialgebra} associated with $r$.
\end{enumerate}
\end{pro}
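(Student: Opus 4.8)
The plan is to verify the three conditions defining a Lie bialgebra structure for the coboundary $\tilde{\Delta}_{r}$ of Eq.~\eqref{lie-cobo}: co-skewsymmetry and the co-Jacobi identity (so that $(\g,\tilde{\Delta}_{r})$ is a Lie coalgebra), together with the $1$-cocycle compatibility condition. The organizing idea is that $\tilde{\Delta}_{r}(g)=\rho(g)(r)$, where $\rho(g):=\ad_{\g}(g)\otimes\id+\id\otimes\ad_{\g}(g)$ is the adjoint action of $\g$ on $\g\otimes\g$. Since $\rho$ is a representation, the compatibility condition becomes automatic, and the two coalgebra axioms reduce to $\ad_{\g}$-invariance statements about $\mathfrak{s}(r)$ and about $\mathbf{C}_{r}$.

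First I would settle co-skewsymmetry. Applying $\tau$ to Eq.~\eqref{lie-cobo} and using $\tau\rho(g)=\rho(g)\tau$ gives $\tilde{\Delta}_{r}(g)+\tau\tilde{\Delta}_{r}(g)=\rho(g)(r+\tau(r))=2\rho(g)(\mathfrak{s}(r))$, so $\tau\tilde{\Delta}_{r}=-\tilde{\Delta}_{r}$ holds precisely when $\mathfrak{s}(r)$ is $\ad_{\g}$-invariant. For the compatibility condition I would use that $\rho$ is a Lie algebra homomorphism, i.e. $\rho([g_{1},g_{2}])=\rho(g_{1})\rho(g_{2})-\rho(g_{2})\rho(g_{1})$, which follows from the Jacobi identity of $\g$; then
\[
\tilde{\Delta}_{r}([g_{1},g_{2}])=\rho([g_{1},g_{2}])(r)=\rho(g_{1})(\tilde{\Delta}_{r}(g_{2}))-\rho(g_{2})(\tilde{\Delta}_{r}(g_{1})),
\]
which is exactly the required identity. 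Thus the compatibility condition holds for every $r$, with no extra hypothesis.

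The main obstacle is the co-Jacobi identity
\[
(\id\otimes\tilde{\Delta}_{r})\tilde{\Delta}_{r}-(\tau\otimes\id)(\id\otimes\tilde{\Delta}_{r})\tilde{\Delta}_{r}-(\tilde{\Delta}_{r}\otimes\id)\tilde{\Delta}_{r}=0.
\]
Here I would substitute Eq.~\eqref{lie-cobo} twice and expand the resulting triple tensor products, repeatedly invoking the Jacobi identity to reorganize the nested brackets. The expected outcome is that the left-hand side evaluated at $g$ equals
\[
(\ad_{\g}(g)\otimes\id\otimes\id+\id\otimes\ad_{\g}(g)\otimes\id+\id\otimes\id\otimes\ad_{\g}(g))(\mathbf{C}_{r}),
\]
the fully $\ad_{\g}$-invariant image of $\mathbf{C}_{r}$. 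Since $r$ solves the CYBE, $\mathbf{C}_{r}=0$, and the co-Jacobi identity follows. The delicate part is the bookkeeping: matching each two-sided bracket term that appears against the three components $[r_{12},r_{13}]$, $[r_{13},r_{23}]$, $[r_{12},r_{23}]$ of $\mathbf{C}_{r}$, and checking that the co-skewsymmetry just established is exactly what is needed to symmetrize the remaining contributions. This is where I expect the genuine computation to lie.

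Finally, statement (2) is the specialization $\mathfrak{s}(r)=0$: a skew-symmetric $r$ makes co-skewsymmetry immediate and the $\ad_{\g}$-invariance of $\mathfrak{s}(r)$ vacuous, while the CYBE again forces $\mathbf{C}_{r}=0$; hence $(\g,[-,-],\tilde{\Delta}_{r})$ is the associated triangular Lie bialgebra. Since this is a classical result, recorded in \cite{RS,LS}, I would present the expansions compactly and refer to those sources for the complete term-by-term verification.
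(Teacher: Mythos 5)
Your proposal is correct and takes the same route as the paper: the paper gives no proof of this proposition at all, importing it from \cite{RS,LS}, and your outline (a coboundary $\tilde{\Delta}_{r}$ is automatically a $1$-cocycle since $\rho(g)=\ad_{\g}(g)\otimes\id+\id\otimes\ad_{\g}(g)$ is a representation; co-skewsymmetry is equivalent to $\ad_{\g}$-invariance of $\mathfrak{s}(r)$; and, granted that invariance, the co-Jacobiator reduces to the adjoint action of $g$ on $\mathbf{C}_{r}$, which vanishes by the CYBE) is precisely the classical computation carried out in those references. The specialization of (2) to $\mathfrak{s}(r)=0$ is likewise handled correctly, so there is nothing to flag.
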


Let $(\g, [-,-])$ be a Lie algebra. For any $r\in \g\otimes\g$,
we can also define linear map $r^{\sharp}: \g^{\ast}\rightarrow\g$ by
$\langle r^{\sharp}(\xi_{1}),\; \xi_{2}\rangle=\langle\xi_{1}\otimes\xi_{2},\; r\rangle$,
linear map $r^{\natural}: \g^{\ast}\rightarrow\g$ by
$\langle\xi_{1},\; r^{\natural}(\xi_{2})\rangle=-\langle\xi_{1}\otimes\xi_{2},\; r\rangle$,
for any $\xi_{1}, \xi_{2}\in\g^{\ast}$, and linear map $\mathcal{I}=r^{\sharp}-r^{\natural}:
\g^{\ast}\rightarrow\g$.

\begin{defi}\label{def:fact-liebia}
Let $(\g, [-,-])$ be a Lie algebra, $r\in\g\otimes\g$, and $(\g, [-,-], \tilde{\Delta}_{r})$
be a quasi-triangular Lie bialgebra associated with $r$. If $\mathcal{I}=
r^{\sharp}-r^{\natural}: \g^{\ast}\rightarrow\g$ is an isomorphism of vector spaces,
then $(\g, [-,-], \tilde{\Delta}_{r})$ is called a {\rm factorizable Lie bialgebra}.
\end{defi}

Next, we consider the Lie bialgebras constructed from Novikov bialgebras.
Recall that a {\it right Novikov algebra} $(B, \circ)$ is a vector space $B$
with a binary operation $\circ: B\rightarrow B\otimes B$, such that
\begin{align*}
&\qquad\qquad\qquad b_{1}\circ(b_{2}\circ b_{3})=b_{2}\circ(b_{1}\circ b_{3}),\\
&(b_{1}\circ b_{2})\circ b_{3}-b_{1}\circ(b_{2}\circ b_{3})
=(b_{1}\circ b_{3})\circ b_{2}-b_{1}\circ(b_{3}\circ b_{2}),
\end{align*}
for any $b_{1}, b_{2}, b_{3}\in B$. The operads of (left) Novikov algebras and right
Novikov algebras are the operadic Koszul dual each other. There is a Lie algebra structure
on the tensor product of a Novikov algebra and a right Novikov algebra.

\begin{pro}[\cite{HBG}]\label{pro:lie-nov}
Let $(A, \diamond)$ be a Novikov algebra and $(B, \circ)$ be a right Novikov algebra.
Define a bracket $[-,-]$ on the vector space $A\otimes B$ by
$$
[a_{1}\otimes b_{1},\; a_{2}\otimes b_{2}]=(a_{1}\diamond a_{2})\otimes(b_{1}\circ b_{2})
-(a_{2}\diamond a_{1})\otimes(b_{2}\circ b_{1}),
$$
for any $a_{1}, a_{2}\in A$ and $b_{1}, b_{2}\in B$. Then $(A\otimes B, [-,-])$
is a Lie algebra, which is called the {\it Lie algebra induced by $(A, \diamond)$
and $(B, \circ)$}.
\end{pro}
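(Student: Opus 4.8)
The plan is to check the two Lie algebra axioms directly. Antisymmetry is immediate: interchanging $a_1\otimes b_1$ and $a_2\otimes b_2$ in the defining formula merely swaps the two summands and reverses the overall sign, so $[a_2\otimes b_2,\,a_1\otimes b_1]=-[a_1\otimes b_1,\,a_2\otimes b_2]$, and this extends bilinearly. All the work therefore lies in the Jacobi identity
$$
\sum_{\mathrm{cyc}}\big[[a_1\otimes b_1,\,a_2\otimes b_2],\,a_3\otimes b_3\big]=0,
$$
where $\sum_{\mathrm{cyc}}$ denotes the cyclic sum over $(1,2,3)$.

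First I would expand this cyclic sum. Each of the three cyclic terms produces four simple tensors, for twelve summands in total, and every summand has the shape (triple $\diamond$-product in $A$) $\otimes$ (triple $\circ$-product in $B$). Six of them carry a \emph{left-nested} $A$-factor $(a_i\diamond a_j)\diamond a_k$, and six carry a \emph{right-nested} $A$-factor $a_i\diamond(a_j\diamond a_k)$. The heart of the matter is the duality between the four defining relations: the left-symmetry and the right-commutativity $(a_i\diamond a_j)\diamond a_k=(a_i\diamond a_k)\diamond a_j$ of $(A,\diamond)$ on one side, and the left-commutativity $b_i\circ(b_j\circ b_k)=b_j\circ(b_i\circ b_k)$ together with the second defining relation of $(B,\circ)$ on the other.

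Concretely, I would apply right-commutativity of $\diamond$ to the six left-nested terms and use the second relation of $(B,\circ)$, in the form $(b_i\circ b_j)\circ b_k-(b_i\circ b_k)\circ b_j=b_i\circ(b_j\circ b_k)-b_i\circ(b_k\circ b_j)$, to rewrite their $B$-factors as right-nested products. Regrouping all twelve summands by their common $B$-factor $b_i\circ(b_j\circ b_k)$ then assembles the $A$-factors into the pre-Lie associators $\mathrm{as}(a_i,a_j,a_k):=(a_i\diamond a_j)\diamond a_k-a_i\diamond(a_j\diamond a_k)$, once more invoking right-commutativity of $\diamond$ to align the left-nested pieces with the right-nested ones. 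After this regrouping the entire expression collapses to six signed terms of the form $\pm\,\mathrm{as}(a_i,a_j,a_k)\otimes b_i\circ(b_j\circ b_k)$, one for each permutation of $(1,2,3)$.

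Finally, these six terms cancel in pairs. The pre-Lie axiom gives $\mathrm{as}(a_i,a_j,a_k)=\mathrm{as}(a_j,a_i,a_k)$, and left-commutativity in $B$ gives $b_i\circ(b_j\circ b_k)=b_j\circ(b_i\circ b_k)$, so the term indexed by $(i,j,k)$ is exactly the negative of the term indexed by $(j,i,k)$; summing over the three transposition pairs $\{(1,2,3),(2,1,3)\}$, $\{(1,3,2),(3,1,2)\}$, $\{(2,3,1),(3,2,1)\}$ yields zero. I expect the main obstacle to be purely organizational — keeping track of the twelve signed summands and recording which of the four axioms is invoked at each rewriting — rather than conceptual, the essential content being the Koszul-dual pairing of the Novikov and right-Novikov relations that forces all contributions to cancel.
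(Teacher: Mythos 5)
Your proof is correct. There is, however, nothing in the paper to compare it against: the proposition is imported from \cite{HBG} as a known result and no proof is given here, so your direct verification stands on its own.

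I checked the computation underlying your outline and it goes through exactly as claimed: the cyclic sum produces six left-nested and six right-nested summands; right-commutativity in $A$ merges the left-nested ones into three pairs with common $A$-factors $(a_{1}\diamond a_{2})\diamond a_{3}$, $(a_{2}\diamond a_{1})\diamond a_{3}$, $(a_{3}\diamond a_{1})\diamond a_{2}$, whose $B$-factors then combine into precisely the differences to which the second right-Novikov relation applies; after regrouping with the right-nested terms the whole expression becomes
$$
\sum_{\sigma}\mathrm{sgn}(\sigma)\,\big((a_{\sigma(1)}\diamond a_{\sigma(2)})\diamond a_{\sigma(3)}
-a_{\sigma(1)}\diamond(a_{\sigma(2)}\diamond a_{\sigma(3)})\big)
\otimes b_{\sigma(1)}\circ(b_{\sigma(2)}\circ b_{\sigma(3)}),
$$
summed over the six permutations $\sigma$ of $(1,2,3)$, and the pre-Lie symmetry of the associator in its first two arguments together with left-commutativity in $B$ kills each transposition pair. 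One small point worth making explicit when you write this up: the second relation of $(B,\circ)$ only converts \emph{differences} of left-nested products into differences of right-nested ones, so the pairing of the six left-nested terms (via right-commutativity in $A$) must be done before that relation is invoked — your ordering of the steps is the right one, but the dependence should be stated rather than left implicit.
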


Recall that a {\it right Novikov coalgebra} is a pair $(B, \Delta)$ where
$B$ is a vector space and $\Delta: B\rightarrow B\otimes B$ is a linear map satisfying
\begin{align*}
&\qquad\qquad\qquad(\id\otimes\Delta)\Delta=(\tau\otimes\id)(\id\otimes\Delta)\Delta,\\
&(\Delta\otimes\id)\Delta-(\id\otimes\tau)(\Delta\otimes\id)\Delta
=(\id\otimes\Delta)\Delta-(\id\otimes\tau)(\id\otimes\Delta)\Delta.
\end{align*}

For the dual version of Proposition \ref{pro:lie-nov}, we have

\begin{pro}[\cite{HBG}]\label{pro:colie-conov}
Let $(A, \delta)$ be a Novikov coalgebra and $(B, \Delta)$ be a right Novikov coalgebra.
Define a linear map $\tilde{\Delta}: A\otimes B\rightarrow(A\otimes B)\otimes(A\otimes B)$ by
$$
\tilde{\Delta}(a\otimes b)=(\id\otimes\id-\tau)\Big(\sum_{(a)}\sum_{(b)}
(a_{(1)}\otimes b_{(1)})\otimes(a_{(2)}\otimes b_{(2)})\Big),
$$
for any $a\in A$ and $b\in B$, where $\delta(a)=\sum_{(a)}a_{(1)}\otimes a_{(2)}$
and $\Delta(b)=\sum_{(b)}b_{(1)}\otimes b_{(2)}$ in the Sweedler notation.
Then $(A\otimes B, \tilde{\Delta})$ is a Lie coalgebra.
\end{pro}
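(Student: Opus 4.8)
The statement is the exact dual of Proposition~\ref{pro:lie-nov}, so the plan is to prove it by dualization rather than by a direct Sweedler computation. Recall from the discussion preceding the definition of a Lie bialgebra that $(\g, \tilde{\Delta})$ is a Lie coalgebra if and only if $(\g^{\ast}, \tilde{\Delta}^{\ast})$ is a Lie algebra, and recall likewise that $(A, \delta)$ being a Novikov coalgebra is equivalent to $(A^{\ast}, \delta^{\ast})$ being a Novikov algebra; by the same dualization $(B^{\ast}, \Delta^{\ast})$ is a right Novikov algebra whenever $(B, \Delta)$ is a right Novikov coalgebra. First I would take $A$ and $B$ finite-dimensional (the general case following by a separation-of-points argument, since the defining identities of a Lie coalgebra live in $(A\otimes B)^{\otimes 2}$ and $(A\otimes B)^{\otimes 3}$ and can be tested against decomposable functionals), and use the canonical identification $(A\otimes B)^{\ast}\cong A^{\ast}\otimes B^{\ast}$.

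The key step is to compute the transpose $\tilde{\Delta}^{\ast}$ and recognize it as the Lie bracket produced by Proposition~\ref{pro:lie-nov} from the Novikov algebra $(A^{\ast}, \delta^{\ast})$ and the right Novikov algebra $(B^{\ast}, \Delta^{\ast})$ (writing $\diamond_{\delta}=\delta^{\ast}$ and $\circ=\Delta^{\ast}$ for the dual products). Concretely, for $\xi_{1}, \xi_{2}\in A^{\ast}$ and $\eta_{1}, \eta_{2}\in B^{\ast}$ I would pair $(\xi_{1}\otimes\eta_{1})\otimes(\xi_{2}\otimes\eta_{2})$ against $\tilde{\Delta}(a\otimes b)$. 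Expanding the antisymmetrization in the definition of $\tilde{\Delta}$ and using $\sum_{(a)}\xi_{1}(a_{(1)})\xi_{2}(a_{(2)})=\langle\xi_{1}\diamond_{\delta}\xi_{2},\; a\rangle$ together with the analogous identity for $\Delta$ gives exactly
$$
\tilde{\Delta}^{\ast}\big((\xi_{1}\otimes\eta_{1})\otimes(\xi_{2}\otimes\eta_{2})\big)
=(\xi_{1}\diamond_{\delta}\xi_{2})\otimes(\eta_{1}\circ\eta_{2})
-(\xi_{2}\diamond_{\delta}\xi_{1})\otimes(\eta_{2}\circ\eta_{1}),
$$
which is precisely the bracket of Proposition~\ref{pro:lie-nov}. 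Hence $\tilde{\Delta}^{\ast}$ is a Lie bracket, and the equivalence above yields that $(A\otimes B, \tilde{\Delta})$ is a Lie coalgebra.

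The anti-cocommutativity $\tau\tilde{\Delta}=-\tilde{\Delta}$ is in any case immediate and independent of all this, since $\tilde{\Delta}=(\id\otimes\id-\tau)\Psi$ where $\Psi(a\otimes b)=\sum_{(a),(b)}(a_{(1)}\otimes b_{(1)})\otimes(a_{(2)}\otimes b_{(2)})$, whence $\tau\tilde{\Delta}=(\tau-\id\otimes\id)\Psi=-\tilde{\Delta}$. The only substantive point, and the place where the Novikov and right-Novikov coalgebra axioms are actually consumed, is therefore hidden inside Proposition~\ref{pro:lie-nov}: the co-Jacobi identity for $\tilde{\Delta}$ is exactly dual to the Jacobi identity for the induced bracket. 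I expect the main obstacle to be purely bookkeeping, namely making the identification $(A\otimes B)^{\ast}\cong A^{\ast}\otimes B^{\ast}$ and the transpose computation precise, and (if one insists on the infinite-dimensional case) justifying that testing against decomposable functionals $\xi\otimes\eta$ suffices to deduce the tensor identities. A direct Sweedler verification of co-Jacobi is possible but would merely duplicate the work already done in Proposition~\ref{pro:lie-nov}, so I would avoid it.
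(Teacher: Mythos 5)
Your proposal is correct, but there is no in-paper proof to compare it against: the proposition is quoted from \cite{HBG} without argument, and the paper merely introduces it as ``the dual version of Proposition~\ref{pro:lie-nov}.'' Your dualization is precisely the natural way to realize that framing, and each step checks out: the transpose computation does give
$\tilde{\Delta}^{\ast}\big((\xi_{1}\otimes\eta_{1})\otimes(\xi_{2}\otimes\eta_{2})\big)
=(\xi_{1}\diamond_{\delta}\xi_{2})\otimes(\eta_{1}\circ\eta_{2})
-(\xi_{2}\diamond_{\delta}\xi_{1})\otimes(\eta_{2}\circ\eta_{1})$;
the duality facts you invoke (Novikov coalgebra on $A$ gives a Novikov algebra on $A^{\ast}$, right Novikov coalgebra on $B$ gives a right Novikov algebra on $B^{\ast}$) are the ``easy'' directions of those equivalences, needing no separation hypothesis, so there is no circularity; and anti-cocommutativity is indeed immediate from $\tau(\id\otimes\id-\tau)=-(\id\otimes\id-\tau)$. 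Two refinements are worth recording. First, the preliminary finite-dimensional reduction is dispensable: your separation-of-points argument is already a complete proof in every dimension, since functionals of the form $\xi_{1}\otimes\eta_{1}\otimes\xi_{2}\otimes\eta_{2}\otimes\xi_{3}\otimes\eta_{3}$ separate points of $(A\otimes B)^{\otimes 3}$, and pairing the co-Jacobi expression for $\tilde{\Delta}$ against such a functional yields exactly the Jacobi expression for the Proposition~\ref{pro:lie-nov} bracket on $(A^{\ast},\delta^{\ast})$ and $(B^{\ast},\Delta^{\ast})$ evaluated on decomposables. Second, in the infinite-dimensional case the cited equivalence ``$(\g,\tilde{\Delta})$ is a Lie coalgebra iff $(\g^{\ast},\tilde{\Delta}^{\ast})$ is a Lie algebra'' must be handled with care, because $A^{\ast}\otimes B^{\ast}$ is then a proper subspace of $(A\otimes B)^{\ast}$; your argument survives precisely because your displayed formula shows that the convolution bracket closes on the subspace $A^{\ast}\otimes B^{\ast}$, where Proposition~\ref{pro:lie-nov} applies, and Jacobi on that subspace together with separation is all that is needed --- the full dual $(A\otimes B)^{\ast}$ never has to carry a Lie algebra structure.
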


Let $(B, \circ)$ be a right Novikov algebra. A bilinear form $\omega(-,-)$ on
$(B, \circ)$ is called {\it invariant} if it satisfies
$$
\omega(b_{1}\circ b_{2},\; b_{3})+\omega(b_{1},\; b_{2}\circ b_{3}+b_{3}\circ b_{2})=0,
$$
for any $b_{1}, b_{2}, b_{3}\in B$. A {\it quadratic right Novikov algebra}, denoted by
$(B, \circ, \omega)$, is a right Novikov algebra $(B, \circ)$ together
with a symmetric invariant nondegenerate bilinear form $\omega(-,-)$.
Let $(B, \circ, \omega)$ be a quadratic right Novikov algebra. Then the bilinear form
$\omega(-,-)$ can naturally expand to the tensor product $B\otimes B\otimes\cdots\otimes B$,
i.e.,
$$
\omega(-,-):\qquad (\underbrace{B\otimes\cdots\otimes B}_{\mbox{\tiny $k$-fold}})\otimes
(\underbrace{B\otimes\cdots\otimes B}_{\mbox{\tiny $k$-fold}})\longrightarrow\Bbbk,
$$
$\omega(b_{1}\otimes b_{2}\otimes\cdots\otimes b_{k},\ \ b'_{1}\otimes b'_{2}
\otimes\cdots\otimes b'_{k})=\sum_{i=1}^{k}\sum_{j=1}^{k}\omega(b_{i}, b_{j})$,
for any $b_{1}, b_{2},\cdots, b_{k}, b'_{1}, b'_{2},\cdots, b'_{k}\in B$.
Then $\omega(-,-)$ on $B\otimes B\otimes\cdots\otimes B$ is also a symmetric
nondegenerate bilinear form. Thus, for any quadratic right Novikov algebra
$(B, \circ, \omega)$, we can define a linear map $\Delta_{\omega}: B\rightarrow B\otimes B$
by $\omega(\Delta_{\omega}(b_{1}),\; b_{2}\otimes b_{3})=\omega(b_{1},\; b_{2}\circ b_{3})$,
for any $b_{1}, b_{2}, b_{3}\in B$. One can check that $(B, \Delta_{\omega})$
is a right Novikov coalgebra.

\begin{ex}\label{ex:rigNov}
Let $(B, \circ)$ be the $2$-dimensional right Novikov algebra with a basis $\{x_{1},
x_{2}\}$ whose multiplication is given by
$$
x_{1}\circ x_{1}=0,\quad x_{1}\circ x_{2}=-2x_{1},\quad
x_{2}\circ x_{1}=x_{1},\quad x_{2}\circ x_{2}=x_{2}.
$$
Define a bilinear form $\omega(-,-)$ on $(B, \circ)$ by
$\omega(x_{1}, x_{1})=\omega(x_{2}, x_{2})=0$, $\omega(x_{1}, x_{2})=\omega(x_{2},
x_{1})=1$. This bilinear form is symmetric nondegenerate and invariant, and so that,
$(B, \circ, \omega)$ is a quadratic right Novikov algebra. Then the bilinear form
$\omega(-,-)$ induces a right Novikov coalgebra structure $\Delta_{\omega}: B\rightarrow
B\otimes B$: $\Delta_{\omega}(x_{1})=x_{1}\otimes x_{1}$, $\Delta_{\omega}(x_{2})
=x_{1}\otimes x_{2}-2x_{2}\otimes x_{1}$.
\end{ex}

\begin{pro}[\cite{HBG}]\label{pro:liebia-novbia}
Let $(A, \diamond, \delta)$ be a Novikov bialgebra and $(B, \circ, \omega)$ be a
quadratic right Novikov algebra, $(A\otimes B, [-,-])$ be the induced Lie algebra
by $(A, \diamond)$ and $(B, \circ)$. Define a linear map $\tilde{\Delta}:
A\otimes B\rightarrow(A\otimes B)\otimes(A\otimes B)$ by
$$
\tilde{\Delta}(a\otimes b)=(\id\otimes\id-\tau)\Big(\sum_{(a)}\sum_{(b)}
(a_{(1)}\otimes b_{(1)})\otimes(a_{(2)}\otimes b_{(2)})\Big),
$$
for any $a\in A$ and $b\in B$, where $\delta(a)=\sum_{(a)}a_{(1)}\otimes a_{(2)}$
and $\Delta_{\omega}(b)=\sum_{(b)}b_{(1)}\otimes b_{(2)}$ in the Sweedler notation.
Then $(A\otimes B, [-,-], \tilde{\Delta})$ is a Lie bialgebra, which is called the
{\rm Lie bialgebra induced by $(A, \diamond, \delta)$ and $(B, \circ, \omega)$}.
\end{pro}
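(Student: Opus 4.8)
The plan is to isolate the one genuinely new computation, since both structural halves are already in place. By Proposition~\ref{pro:lie-nov}, $(A\otimes B,[-,-])$ is a Lie algebra; and because $\omega$ is symmetric, invariant and nondegenerate, the induced comultiplication $\Delta_\omega$ makes $(B,\Delta_\omega)$ a right Novikov coalgebra, so Proposition~\ref{pro:colie-conov} shows $(A\otimes B,\tilde\Delta)$ is a Lie coalgebra. Hence the only remaining task is the Lie bialgebra compatibility (the $1$-cocycle condition)
$$
\tilde\Delta([g_1,g_2])=\big(\ad(g_1)\otimes\id+\id\otimes\ad(g_1)\big)\tilde\Delta(g_2)
-\big(\ad(g_2)\otimes\id+\id\otimes\ad(g_2)\big)\tilde\Delta(g_1),
$$
which by bilinearity I would check on elementary tensors $g_1=a_1\otimes b_1$ and $g_2=a_2\otimes b_2$.

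First I would record the co-identities for $\Delta_\omega$ forced by the quadratic structure. Writing $\delta(a)=\sum a_{(1)}\otimes a_{(2)}$ and $\Delta_\omega(b)=\sum b_{(1)}\otimes b_{(2)}$, I would pair the defining relation $\omega(\Delta_\omega(b),b'\otimes b'')=\omega(b,b'\circ b'')$ against an arbitrary $b_3\otimes b_4$ and feed in the invariance $\omega(b\circ b',b'')+\omega(b,b'\circ b''+b''\circ b')=0$ together with symmetry of $\omega$. This transposes the right Novikov product axioms into co-identities and, in particular, yields a ``coproduct of a product'' formula expressing $\Delta_\omega(b_1\circ b_2)$ and $\Delta_\omega(b_2\circ b_1)$ through $\Delta_\omega(b_1)$, $\Delta_\omega(b_2)$ with left/right $\circ$-multiplications inserted. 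I would package these as an auxiliary lemma.

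The core is then a direct expansion of both sides of the cocycle condition. On the left, $[a_1\otimes b_1,a_2\otimes b_2]=(a_1\diamond a_2)\otimes(b_1\circ b_2)-(a_2\diamond a_1)\otimes(b_2\circ b_1)$, so applying $\tilde\Delta$ brings in $\delta(a_1\diamond a_2)$, $\delta(a_2\diamond a_1)$, $\Delta_\omega(b_1\circ b_2)$ and $\Delta_\omega(b_2\circ b_1)$. Into the $A$-components I would substitute the three defining Novikov bialgebra compatibility equations for $(A,\diamond,\delta)$, which rewrite $\delta(a_1\diamond a_2)$ via $\delta(a_1)$, $\delta(a_2)$ acted on by $\fl_{A}$ and $\fr_{A}$; into the $B$-components I would substitute the coproduct-of-product formula from the lemma. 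On the right, each $\ad$-action unfolds into $\diamond$- and $\circ$-multiplications applied to the legs of $\tilde\Delta(a_i\otimes b_i)=(\id-\tau)\big(\sum\sum(a_{i(1)}\otimes b_{i(1)})\otimes(a_{i(2)}\otimes b_{i(2)})\big)$. After substitution both sides become sums of terms of the form (a leg of some $\delta(a_i)$)$\otimes$(a leg of some $\Delta_\omega(b_j)$) with $\diamond$/$\circ$ multiplications inserted, and the claim reduces to a term-by-term cancellation.

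The main obstacle is the bookkeeping in this matching, which is inflated by three features that must be coordinated: the antisymmetrizer $(\id-\tau)$ built into $\tilde\Delta$, the presence of both orders $a_1\diamond a_2,\ a_2\diamond a_1$ and $b_1\circ b_2,\ b_2\circ b_1$ in the bracket, and the occurrence of the combinations $\fl_{A}+\fr_{A}$ and the operator products such as $\fl_{A}(a_1)\fl_{A}(a_2)$ in the Novikov bialgebra axioms. The decisive point is that the right Novikov coalgebra co-identities and the antisymmetry $\tau\tilde\Delta=-\tilde\Delta$ collapse the $B$-side sums exactly when the $A$-side terms are reorganized by the Novikov bialgebra relations; the symmetry of $\omega$, which makes $\tau\Delta_\omega$ correspond to the opposite multiplication on $B$, is what lets the two bracket orders match the two $\tau$-components of $\tilde\Delta$. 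I expect that verifying that the invariance relation for $\omega$ supplies precisely the terms needed to absorb the $\fr_{A}$- and $(\fl_{A}+\fr_{A})$-contributions from the Novikov bialgebra axioms will be the step demanding the most care.
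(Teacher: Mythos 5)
Your plan is correct and is essentially the argument behind this result: the paper itself gives no proof of Proposition \ref{pro:liebia-novbia} (it is imported from \cite{HBG}), and the proof there proceeds exactly as you outline — the Lie algebra and Lie coalgebra structures come for free from Propositions \ref{pro:lie-nov} and \ref{pro:colie-conov} once $(B,\Delta_{\omega})$ is known to be a right Novikov coalgebra, and the $1$-cocycle condition is verified by direct expansion, substituting the Novikov bialgebra axioms on the $A$-legs and the coproduct-of-product identities for $\Delta_{\omega}$ (derived from the symmetry, invariance and nondegeneracy of $\omega$) on the $B$-legs. These $\omega$-identities are precisely the ones this paper rederives in dual-basis form inside the proof of Proposition \ref{pro:NYBE-CYBE}, so your auxiliary lemma is both available and the standard device for the term-by-term cancellation you describe.
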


Next, we consider the relation between the solutions of the NYBE in a Novikov algebra
and the solutions of the CYBE in the induced Lie algebra.
Let $(B, \circ, \omega)$ be a quadratic right Novikov algebra and
$\{e_{1}, e_{2},\cdots, e_{n}\}$ is a basis of $B$. Since $\omega(-,-)$ is
symmetric nondegenerate, we get a basis $\{f_{1}, f_{2},\cdots, f_{n}\}$ of $B$,
which is called the dual basis of $\{e_{1}, e_{2},\cdots, e_{n}\}$ with respect to
$\omega(-,-)$, by $\omega(e_{i}, f_{j})=\delta_{ij}$, where $\delta_{ij}$ is the
Kronecker delta. Here, for some special solutions of the NYBE in a Novikov algebra, we have

\begin{pro}\label{pro:NYBE-CYBE}
Let $(A, \diamond)$ be a Novikov algebra and $(B, \circ, \omega)$ be a quadratic
right Novikov algebra, and $(A\otimes B, [-,-])$ be the induced Lie algebra.
Suppose that $r=\sum_{i} x_{i}\otimes y_{i}\in A\otimes A$ is a solution
of the NYBE in $(A, \diamond)$, $\mathfrak{s}(r)$ is invariant, $\{e_{1}, e_{2},\cdots,
e_{n}\}$ is a basis of $B$ and $\{f_{1}, f_{2},\cdots, f_{n}\}$ is the dual basis of
$\{e_{1}, e_{2},\cdots, e_{n}\}$ with respect to $\omega(-,-)$. Then
$$
\widehat{r}=\sum_{i, j}(x_{i}\otimes e_{j})\otimes(y_{i}\otimes f_{j})
\in(A\otimes B)\otimes(A\otimes B)
$$
is a solution of the CYBE in $(A\otimes B, [-,-])$, and $\widehat{r}+\tau(\widehat{r})$
is $\ad_{A\otimes B}$-invariant.
\end{pro}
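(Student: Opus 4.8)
The plan is to exhibit $\widehat r$ as the ``interleaving'' of $r\in A\otimes A$ with the Casimir element $C:=\sum_{j}e_{j}\otimes f_{j}\in B\otimes B$ of the quadratic form $\omega$, and to transport the two hypotheses on $r$ (the NYBE and the invariance of $\mathfrak{s}(r)$) through this construction using the structural properties of $C$. First I would record two facts about $C$. Since $\omega$ is symmetric, the Casimir is symmetric, $\tau(C)=C$, that is $\sum_{j}e_{j}\otimes f_{j}=\sum_{j}f_{j}\otimes e_{j}$. Since $\omega$ is invariant, the $\omega$-adjoint of the right multiplication $(-)\circ b$ on $B$ equals $-(b\circ(-)+(-)\circ b)$; the standard Casimir identity $(T\otimes\id)(C)=(\id\otimes T^{\ast})(C)$, valid for any operator $T$ and its $\omega$-adjoint $T^{\ast}$, then yields
\begin{align*}
\sum_{j}(e_{j}\circ b)\otimes f_{j}&=-\sum_{j}e_{j}\otimes(b\circ f_{j}+f_{j}\circ b),\\
\sum_{j}e_{j}\otimes(f_{j}\circ b)&=-\sum_{j}(b\circ e_{j}+e_{j}\circ b)\otimes f_{j},
\end{align*}
for all $b\in B$. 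These are the exact $B$-side counterparts of the Novikov invariance operator $\Phi(a)=\fl_{A}(a)\otimes\id+\id\otimes(\fl_{A}+\fr_{A})(a)$ on $A$, and they are the engine of both parts.

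For the $\ad_{A\otimes B}$-invariance of $\widehat r+\tau(\widehat r)$, I would first use $\tau(C)=C$ to check that $\tau(\widehat r)=\widehat{\tau(r)}$, so that $\widehat r+\tau(\widehat r)=\widehat{r+\tau(r)}=2\widehat{\mathfrak{s}(r)}$ is built solely from the symmetric part of $r$. Writing the induced bracket as the operator $\ad_{A\otimes B}(a\otimes b)=\fl_{A}(a)\otimes(b\circ(-))-\fr_{A}(a)\otimes((-)\circ b)$ on $A\otimes B$, I would expand $(\ad_{A\otimes B}(a\otimes b)\otimes\id+\id\otimes\ad_{A\otimes B}(a\otimes b))(\widehat{r+\tau(r)})$ into four families of terms and pair the result against an arbitrary $(\xi\otimes\beta)\otimes(\eta\otimes\gamma)$, which factorizes each term into an $A$-pairing and a $B$-pairing. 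The $A$-pairings are governed by two consequences of invariance of $\mathfrak{s}(r)$: the relation $\Phi(a)(\mathfrak{s}(r))=0$ itself, and---because $\mathfrak{s}(r)$ is symmetric---the companion relation $(\fr_{A}(a)\otimes\id-\id\otimes\fr_{A}(a))(\mathfrak{s}(r))=0$ noted before Proposition~\ref{pro:inv-bia}. The $B$-pairings are governed by the two Casimir identities above (which in particular force the two ``diagonal'' $B$-pairings to coincide). Substituting these, the four families collapse: the combination multiplying one Casimir pairing is exactly $\Phi(a)(\mathfrak{s}(r))$ and vanishes, while the combination multiplying the other is $(\fr_{A}(a)\otimes\id-\id\otimes\fr_{A}(a))(\mathfrak{s}(r))$ and also vanishes. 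Hence $\widehat r+\tau(\widehat r)$ is $\ad_{A\otimes B}$-invariant.

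For the CYBE, I would expand $\mathbf{C}_{\widehat{r}}=[\widehat r_{12},\widehat r_{13}]+[\widehat r_{13},\widehat r_{23}]+[\widehat r_{12},\widehat r_{23}]$ using the same bracket formula. Each bracket produces terms whose $A$-part is a triple $\diamond$-product of the $x_{i},y_{i}$ and whose $B$-part is a double contraction of $C$ against $\circ$, typically of the shape $\sum_{j,l}(e_{j}\circ e_{l})\otimes f_{j}\otimes f_{l}$ distributed over the three tensor slots. I would then apply the Casimir identities to move every $B$-multiplication onto a single Casimir leg, after which the $B$-factors decouple and the surviving $A$-data reorganizes precisely into the four terms $r_{13}\diamond r_{23}$, $r_{12}\diamond r_{23}$, $r_{23}\diamond r_{12}$, $r_{13}\diamond r_{12}$ of $\mathbf{N}_{r}$, together with residual expressions of the form $\Phi(a)(\mathfrak{s}(r))$. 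Since $r$ solves the NYBE, $\mathbf{N}_{r}=0$, and since $\mathfrak{s}(r)$ is invariant the residual terms vanish; hence $\mathbf{C}_{\widehat{r}}=0$.

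The main obstacle is the bookkeeping in the CYBE computation: one must track how the three tensor slots of $(A\otimes B)^{\otimes 3}$ are populated by the $\diamond$-products in $A$ and the $\circ$-contractions of $C$ in $B$, and verify that, after the Casimir identities are applied, the $A$-data assembles into $\mathbf{N}_{r}$ slot-by-slot rather than into an uncontrolled remainder. The delicate point is that the induced bracket mixes the asymmetric left/right multiplications $\fl_{A},\fr_{A}$ of the (left) Novikov algebra $A$ with the left/right multiplications of the right Novikov algebra $B$; the construction closes precisely because the $\omega$-adjoint relation $((-)\circ b)^{\ast}=-(b\circ(-)+(-)\circ b)$ mirrors the structure of $\Phi$. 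Keeping these two asymmetries aligned through the contraction is where the care is required.
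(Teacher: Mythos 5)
Your proposal is correct and follows essentially the same route as the paper: the paper likewise derives the Casimir identities for $\sum_j e_j\otimes f_j$ from the symmetry, nondegeneracy and invariance of $\omega$ (by pairing against basis elements rather than via $\omega$-adjoints, but with identical content), uses them to align the $B$-legs of the expanded bracket expressions, and then invokes the invariance of $\mathfrak{s}(r)$ to reduce the surviving $A$-data to $(\id\otimes\tau)(\mathbf{N}_r)=0$ for the CYBE and to $\Phi(a)(r+\tau(r))=0$ for the $\ad_{A\otimes B}$-invariance. Your observation that $\widehat{r}+\tau(\widehat{r})=\widehat{r+\tau(r)}$ and your use of the companion relation $(\fr_A(a)\otimes\id-\id\otimes\fr_A(a))(\mathfrak{s}(r))=0$ are minor organizational variants of the same argument.
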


\begin{proof}
First, we have
\begin{align*}
&\;[\widehat{r}_{12},\; \widehat{r}_{13}]+[\widehat{r}_{12},\; \widehat{r}_{23}]
+[\widehat{r}_{13},\; \widehat{r}_{23}]\\
=&\;\sum_{i,j}\sum_{p,q}\Big((x_{i}\diamond x_{j}\otimes e_{p}\diamond e_{q}
-x_{i}\diamond x_{j}\otimes e_{q}\diamond e_{p})\otimes(y_{i}\otimes f_{p})
\otimes(y_{j}\otimes f_{q})\\[-4mm]
&\qquad\qquad+(x_{i}\otimes e_{p})\otimes(y_{i}\diamond x_{j}\otimes f_{p}\circ e_{q}
-x_{j}\diamond y_{i}\otimes e_{q}\circ f_{s})\otimes(y_{j}\otimes f_{q})\\[-1mm]
&\qquad\qquad+(x_{i}\otimes e_{p})\otimes(x_{j}\otimes e_{q})\otimes
(y_{j}\diamond y_{j}\otimes f_{p}\circ f_{q}-y_{j}\diamond y_{i}\otimes f_{q}\circ f_{p})\Big).
\end{align*}
Moreover, for given $s, u, v\in\{1, 2,\cdots, n\}$, we have
\begin{align*}
\omega\Big(e_{s}\otimes e_{u}\otimes e_{v},\;
\sum_{p,q}e_{p}\circ e_{q}\otimes f_{p}\otimes f_{q}\Big)
&=\omega(e_{s},\; e_{u}\circ e_{v}),\\[-2mm]
\omega\Big(e_{s}\otimes e_{u}\otimes e_{v},\;
\sum_{p,q} e_{q}\circ e_{p}\otimes f_{p}\otimes f_{q}\Big)
&=\omega(e_{s},\; e_{v}\circ e_{u}),\\[-2mm]
\omega\Big(e_{s}\otimes e_{u}\otimes e_{v},\;
\sum_{p,q} e_{p}\otimes f_{p}\circ e_{q}\otimes f_{q}\Big)
&=-\omega(e_{s},\; e_{u}\diamond e_{v}+e_{v}\circ e_{u}).
\end{align*}
By the nondegeneracy of $\omega(-,-)$, we get
$$
\sum_{p,q}e_{p}\otimes f_{p}\circ e_{q}\otimes f_{q}=-\sum_{p,q}e_{p}\circ
e_{q}\otimes f_{p}\otimes f_{q}+e_{q}\circ e_{p}\otimes f_{p}\otimes f_{q}.
$$
Similarly, we also have
\begin{align*}
&\qquad\qquad\sum_{p,q}e_{p}\otimes e_{q}\otimes f_{p}\circ f_{q}
=-\sum_{p,q}(e_{p}\otimes e_{q}\circ f_{p}\otimes f_{q}
+e_{p}\otimes e_{q}\otimes f_{q}\circ f_{p}),\\[-2mm]
&\sum_{p,q}e_{p}\otimes e_{q}\circ f_{p}\otimes f_{q}
=\sum_{p,q}e_{q}\circ e_{p}\otimes f_{p}\otimes f_{q},\qquad
\sum_{p,q}e_{p}\otimes e_{q}\otimes f_{q}\circ f_{p}
=\sum_{p,q}e_{p}\circ e_{q}\otimes f_{p}\otimes f_{q}.
\end{align*}
Thus, we have
\begin{align*}
&\;[\widehat{r}_{12},\; \widehat{r}_{13}]+[\widehat{r}_{12},\; \widehat{r}_{23}]
+[\widehat{r}_{13},\; \widehat{r}_{23}]\\
=&\;\sum_{p,q}\sum_{i,j}\Big((x_{i}\diamond x_{j}\otimes e_{p}\circ e_{q})\otimes
(y_{i}\otimes f_{p})\otimes(y_{j}\otimes f_{q})
-(x_{i}\otimes e_{p}\circ e_{q})\otimes
(y_{i}\circ x_{j} \otimes f_{p})\otimes (y_{j}\otimes f_{q})\\[-4mm]
&\qquad\qquad-(x_{i}\otimes e_{p}\circ e_{q})\otimes(x_{j}\otimes f_{p})
\otimes(y_{i}\diamond y_{j}\otimes f_{q})
-(x_{i}\otimes e_{p}\circ e_{q})\otimes(x_{j}\otimes f_{p})\otimes
(y_{j}\diamond y_{i}\otimes f_{q})\Big)\\
&\;-\sum_{p, q}\sum_{i,j}\Big((x_{j}\diamond x_{i}\otimes e_{q}\circ e_{p})\otimes
(y_{i}\otimes f_{p})\otimes(y_{j}\otimes f_{q})
+(x_{i}\otimes e_{q}\circ e_{p})\otimes(y_{i}\diamond x_{j}\otimes f_{p})
\otimes(y_{j}\otimes f_{q})\\[-4mm]
&\qquad\qquad+(x_{i}\otimes e_{q}\circ e_{p})\otimes(x_{j}\diamond y_{i}\otimes f_{p})
\otimes (y_{j}\otimes f_{q})
+(x_{i}\otimes e_{q}\circ e_{p})\otimes(x_{j}\otimes f_{p})\otimes
(y_{i}\diamond y_{j}\otimes f_{q})\Big)
\end{align*}
Since $\mathfrak{s}(r)$ is invariant, i.e., $-\sum_{j}\big((y_{i}\diamond x_{j})\otimes y_{j}
+x_{j}\otimes(y_{i}\diamond y_{j})+x_{j}\otimes(y_{j}\diamond y_{i})\big)
=\sum_{j}\big((y_{i}\diamond y_{j})\otimes x_{j}
+y_{j}\otimes(y_{i}\diamond x_{j})+y_{j}\otimes(x_{j}\diamond y_{i})\big)$, we get
\begin{align*}
&\;\sum_{i,j}\Big((x_{i}\diamond x_{j})\otimes y_{i}\otimes y_{j}
-x_{i}\otimes(y_{i}\diamond x_{j})\otimes y_{j}
-x_{i}\otimes x_{j}\otimes(y_{i}\diamond y_{j})
-x_{i}\otimes x_{j}\otimes(y_{j}\diamond y_{i})\Big)\\[-2mm]
=&\;\sum_{i,j}\Big((x_{i}\diamond x_{j})\otimes y_{i}\otimes y_{j}
+x_{i}\otimes(y_{i}\diamond y_{j})\otimes x_{j}
+x_{i}\otimes y_{j}\otimes(y_{i}\diamond x_{j})
+x_{i}\otimes y_{j}\otimes(x_{j}\diamond y_{i})\Big)\\[-2mm]
=&\;(\id\otimes\tau)(\mathbf{N}_{r})
\end{align*}
Thus, $[\widehat{r}_{12},\; \widehat{r}_{13}]+[\widehat{r}_{12},\; \widehat{r}_{23}]
+[\widehat{r}_{13},\; \widehat{r}_{23}]=0$, i.e., $\widehat{r}$ is a solution of
the CYBE in $(A\otimes B, [-,-])$ if $r$ is a solution of the NYBE in $(A, \diamond)$.

Next, we show that $\widehat{r}+\tau(\widehat{r})$ is $\ad_{A\otimes B}$-invariant.
For any $a\in A$ and $e_{p}\in B$, $p=1, 2,\cdots, n$, we have
\begin{align*}
&\big(\id\otimes\ad_{A\otimes B}(a\otimes e_{p})
+\ad_{A\otimes B}(a\otimes e_{p})\otimes\id\big)(\widehat{r}+\tau(\widehat{r}))\\
=&\;\sum_{i, j}\Big((x_{i}\otimes e_{j})\otimes[a\otimes e_{p},\; y_{i}\otimes f_{j}]
+[a\otimes e_{p},\; x_{i}\otimes e_{j}]\otimes(y_{i}\otimes f_{j})\\[-5mm]
&\qquad\quad+(y_{i}\otimes f_{j})\otimes[a\otimes e_{p},\; x_{i}\otimes e_{j}]
+[a\otimes e_{p},\; y_{i}\otimes f_{j}]\otimes(x_{i}\otimes e_{j})\Big)\\
=&\;\sum_{i,j}\Big((x_{i}\otimes e_{j})\otimes((a\diamond y_{i})\otimes(e_{p}\circ f_{j}))
-(x_{i}\otimes e_{j})\otimes((y_{i}\diamond a)\otimes(f_{j}\circ e_{p}))\\[-4mm]
&\qquad\quad+((a\diamond x_{i})\otimes(e_{p}\circ e_{j}))\otimes(y_{i}\otimes f_{j})
-((x_{i}\diamond a)\otimes(e_{j}\circ e_{p}))\otimes(y_{i}\otimes f_{j})\\
&\qquad\quad+(y_{i}\otimes f_{j})\otimes((a\diamond x_{i})\otimes(e_{p}\circ e_{j}))
-(y_{i}\otimes f_{j})\otimes((x_{i}\diamond a)\otimes(e_{j}\circ e_{p}))\\[-1mm]
&\qquad\quad+((a\diamond y_{i})\otimes(e_{p}\circ f_{j}))\otimes(x_{i}\otimes e_{j})
-((y_{i}\diamond a)\otimes(f_{j}\circ e_{p}))\otimes(x_{i}\otimes e_{j})\Big).
\end{align*}
For given $s, t\in\{1, 2,\cdots, n\}$, we have
$$
\omega\Big(e_{s}\otimes e_{t},\; \sum_{j}(e_{p}\circ e_{j})\otimes f_{j}\Big)
=\omega(e_{s},\; e_{p}\circ e_{t})=\omega(e_{t},\; e_{p}\circ e_{s})
=\omega\Big(e_{s}\otimes e_{t},\; \sum_{j}f_{j}\otimes(e_{p}\circ e_{j})\Big).
$$
Thus, $\sum_{j}(e_{p}\circ e_{j})\otimes f_{j}=\sum_{j}f_{j}\otimes(e_{p}\circ e_{j})$
since $\omega(-,-)$ is nondegenerate. Similarly, we also have
$\sum_{j}(e_{p}\circ e_{j})\otimes f_{j}=\sum_{j}e_{j}\otimes(e_{p}\circ f_{j})
=\sum_{j}(e_{p}\circ f_{j})\otimes e_{j}$.
Moreover, since $\omega(-,-)$ is invariant, we have
\begin{align*}
&\omega\Big(e_{s}\otimes e_{t},\; \sum_{j}(e_{j}\circ e_{p})\otimes f_{j}\Big)
=\omega(e_{s},\; e_{t}\circ e_{p})=-\omega(e_{t},\; e_{p}\circ e_{s})
-\omega(e_{t},\; e_{s}\circ e_{p})\\[-4mm]
&\qquad\quad=-\omega\Big(e_{s}\otimes e_{t},\; \sum_{j}f_{j}\otimes(e_{p}\circ e_{j})\Big)
-\omega\Big(e_{s}\otimes e_{t},\; \sum_{j}f_{j}\otimes(e_{j}\circ e_{p})\Big)
\end{align*}
That is, $\sum_{j}f_{j}\otimes(e_{p}\circ e_{j})=-\sum_{j}(e_{j}\circ e_{p})\otimes f_{j}
-\sum_{j}f_{j}\otimes(e_{j}\circ e_{p})$. Similarly, we have
$\sum_{j}e_{j}\otimes(e_{p}\circ f_{j})=-\sum_{j}(e_{j}\circ e_{p})\otimes f_{j}
-\sum_{j}e_{j}\otimes(f_{j}\circ e_{p})$, $\sum_{j}f_{j}\otimes(e_{p}\circ e_{j})
=-\sum_{j}(f_{j}\circ e_{p})\otimes e_{j}-\sum_{j}f_{j}\otimes(e_{j}\circ e_{p})$
and $\sum_{j}e_{j}\otimes(e_{p}\circ f_{j})=-\sum_{j}(f_{j}\circ e_{p})\otimes e_{j}
-\sum_{j}e_{j}\otimes(f_{j}\circ e_{p})$.
Thus, $\sum_{j}(e_{j}\circ e_{p})\otimes f_{j}=\sum_{j}(f_{j}\circ e_{p})\otimes e_{j}$
and $\sum_{j}f_{j}\otimes(e_{j}\circ e_{p})=\sum_{j}e_{j}\otimes(f_{j}\circ e_{p})$.
Therefore, we have
\begin{align*}
&\;\big(\id\otimes\ad_{A\otimes B}(a\otimes e_{p})
+\ad_{A\otimes B}(a\otimes e_{p})\otimes\id\big)(\widehat{r}+\tau(\widehat{r}))\\
=&\;\sum_{i,j}\Big(((a\diamond x_{i})\otimes e_{j})\otimes(y_{i}\otimes(f_{j}\circ e_{p}))
+(x_{i}\otimes e_{j})\otimes((a\diamond y_{i})\otimes(f_{j}\circ e_{p}))\\[-4mm]
&\qquad\quad+(x_{i}\otimes e_{j})\otimes((y_{i}\diamond a)\otimes(f_{j}\circ e_{p}))
+((a\diamond y_{i})\otimes e_{j})\otimes(x_{i}\otimes(f_{j}\circ e_{p}))\\[-1mm]
&\qquad\quad+(y_{i}\otimes e_{j})\otimes((a\diamond x_{i})\otimes(f_{j}\circ e_{p}))
+(y_{i}\otimes e_{j})\otimes((x_{i}\diamond a)\otimes(f_{j}\circ e_{p}))\Big)\\[-1mm]
&+\sum_{i,j}\Big(((a\diamond x_{i})\otimes(e_{j}\circ e_{p}))\otimes(y_{i}\otimes f_{j})
+(x_{i}\otimes(e_{j}\circ e_{p}))\otimes((a\diamond y_{i})\otimes f_{j})\\[-4mm]
&\qquad\quad+(x_{i}\otimes(e_{j}\circ e_{p}))\otimes((y_{i}\diamond a)\otimes f_{j})
+((a\diamond y_{i})\otimes(e_{j}\circ e_{p}))\otimes(x_{i}\otimes f_{j})\\[-1mm]
&\qquad\quad+(y_{i}\otimes(e_{j}\circ e_{p}))\otimes((a\diamond x_{i})\otimes f_{j})
+(y_{i}\otimes(e_{j}\circ e_{p}))\otimes((x_{i}\diamond a)\otimes f_{j})\Big).
\end{align*}
Since $\mathfrak{s}(r)$ is invariant, i.e., $(a\diamond x_{i})\otimes y_{i}
+x_{i}\otimes(a\diamond y_{i})+x_{i}\otimes(y_{i}\diamond a)+(a\diamond y_{i})\otimes x_{i}
+y_{i}\otimes(a\diamond x_{i})+y_{i}\otimes(x_{i}\diamond a)=0$, we get
$\big(\id\otimes\ad_{A\otimes B}(a\otimes e_{p})
+\ad_{A\otimes B}(a\otimes e_{p})\otimes\id\big)(\widehat{r}+\tau(\widehat{r}))=0$,
$\widehat{r}+\tau(\widehat{r})$ is $\ad_{A\otimes B}$-invariant. The proof is finished.
\end{proof}

By the nondegeneracy and symmetry of bilinear form $\omega(-,-)$, one can check that
the element $\widehat{r}\in(A\otimes B)\otimes(A\otimes B)$ defined in Proposition
\ref{pro:NYBE-CYBE} is skew-symmetric if $r$ is skew-symmetric. Thus, we have
the following corollary directly.

\begin{cor}[\cite{HBG}]\label{cor:sNYBE-sCYBE}
Let $(A, \diamond)$ be a Novikov algebra and $(B, \circ, \omega)$ be a quadratic
right Novikov algebra, $(A\otimes B, [-,-])$ be the induced Lie algebra.
If $r=\sum_{i} x_{i}\otimes y_{i}\in A\otimes A$ is a skew-symmetric solution of the
NYBE in $(A, \diamond)$, then
$$
\widehat{r}=\sum_{i, j}(x_{i}\otimes e_{j})\otimes(y_{i}\otimes f_{j})
\in(A\otimes B)\otimes(A\otimes B).
$$
is a skew-symmetric solution of the CYBE in $(A\otimes B, [-,-])$.
\end{cor}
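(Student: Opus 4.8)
The plan is to derive this corollary as a direct specialization of Proposition \ref{pro:NYBE-CYBE}, supplemented by a short verification that skew-symmetry is preserved. First I would observe that if $r$ is skew-symmetric, then its symmetric part vanishes, $\mathfrak{s}(r)=\frac{1}{2}(r+\tau(r))=0$, and the zero tensor is trivially invariant. Hence the hypotheses of Proposition \ref{pro:NYBE-CYBE} are automatically satisfied, and that proposition immediately yields that $\widehat{r}$ is a solution of the CYBE in $(A\otimes B, [-,-])$. Thus the only remaining point is to check that $\widehat{r}$ is skew-symmetric.

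For this, the key step is to show that the Casimir-type element $C:=\sum_{j}e_{j}\otimes f_{j}\in B\otimes B$ is symmetric, i.e. $\tau(C)=C$. I would argue this from the symmetry and nondegeneracy of $\omega(-,-)$: writing $f_{j}=\sum_{k}c_{jk}e_{k}$ and using $\omega(e_{i}, f_{j})=\delta_{ij}$ together with the symmetric Gram matrix $G_{ik}=\omega(e_{i}, e_{k})$, one finds that the coefficient matrix $(c_{jk})$ equals $G^{-1}$, which is again symmetric; hence $C=\sum_{j,k}(G^{-1})_{jk}\,e_{j}\otimes e_{k}$ is invariant under $\tau$. Alternatively, $C$ is the image of the element of $B^{\ast}\otimes B$ corresponding to $\id$ under the self-dual isomorphism $B^{\ast}\cong B$ induced by $\omega$, and self-duality forces symmetry.

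With the symmetry of $C$ in hand, I would write $\widehat{r}$ as the image of $r\otimes C$ under the reshuffling map $\sigma\colon A\otimes A\otimes B\otimes B\rightarrow(A\otimes B)\otimes(A\otimes B)$ that swaps the two middle tensor factors, so that $\widehat{r}=\sigma(r\otimes C)$. A direct check shows that $\sigma$ intertwines the flip $\tau$ on $(A\otimes B)^{\otimes 2}$ with the simultaneous flips of the two $A$-factors and of the two $B$-factors; consequently $\tau(\widehat{r})=\sigma(\tau(r)\otimes\tau(C))=\sigma((-r)\otimes C)=-\widehat{r}$, using $\tau(r)=-r$ and $\tau(C)=C$. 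This establishes skew-symmetry and completes the proof.

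I do not expect any serious obstacle here: the genuine content, namely the CYBE identity for $\widehat{r}$, has already been carried out in Proposition \ref{pro:NYBE-CYBE}, and the remaining work is the bookkeeping fact that $\sum_{j}e_{j}\otimes f_{j}$ is symmetric. The only mild care needed is to track the reshuffling of tensor slots so that the single sign coming from the skew-symmetry of $r$ is not accidentally cancelled by a sign from the $B$-part; this is precisely why the symmetry, rather than skew-symmetry, of $C$ is the decisive ingredient.
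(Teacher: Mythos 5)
Your proposal is correct and follows essentially the same route as the paper: the CYBE part is obtained by specializing Proposition \ref{pro:NYBE-CYBE} (skew-symmetry gives $\mathfrak{s}(r)=0$, which is trivially invariant), and the skew-symmetry of $\widehat{r}$ is deduced from the symmetry and nondegeneracy of $\omega(-,-)$. The only difference is that you spell out the bookkeeping detail---the symmetry of the Casimir-type element $\sum_{j}e_{j}\otimes f_{j}$ via the Gram matrix, and the reshuffling argument for the signs---which the paper compresses into ``one can check.''
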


We give the main conclusion of this section.

\begin{thm}\label{thm:indu-sLiebia}
Let $(A, \diamond, \delta)$ be a Novikov bialgebra and $(B, \circ, \omega)$ be a quadratic
right Novikov algebra, and $(A\otimes B, [-,-], \tilde{\Delta})$ be the induced
Lie bialgebra by $(A, \diamond, \delta)$ and $(B, \circ, \omega)$. Then, we have
\begin{enumerate}\itemsep=0pt
\item $(A\otimes B, [-,-], \tilde{\Delta})$ is quasi-triangular if
     $(A, \diamond, \delta)$ is quasi-triangular;
\item $(A\otimes B, [-,-], \tilde{\Delta})$ is triangular if
     $(A, \diamond, \delta)$ is triangular;
\item $(A\otimes B, [-,-], \tilde{\Delta})$ is factorizable if
     $(A, \diamond, \delta)$ is factorizable.
\end{enumerate}
\end{thm}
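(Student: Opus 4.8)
The plan is to exhibit the explicit $r$-matrix $\widehat r\in (A\otimes B)\otimes(A\otimes B)$ witnessing each property and then to identify the induced comultiplication $\tilde\Delta$ of Proposition \ref{pro:liebia-novbia} with the coboundary comultiplication $\tilde\Delta_{\widehat r}$ of Eq. \eqref{lie-cobo}. For (1), since $(A,\diamond,\delta)$ is quasi-triangular we may write $\delta=\delta_r$ for some $r\in A\otimes A$ that solves the NYBE in $(A,\diamond)$ and whose symmetric part $\mathfrak{s}(r)$ is invariant. Proposition \ref{pro:NYBE-CYBE} then tells us that $\widehat r=\sum_{i,j}(x_i\otimes e_j)\otimes(y_i\otimes f_j)$ is a solution of the CYBE in $(A\otimes B,[-,-])$ with $\widehat r+\tau(\widehat r)$ being $\ad_{A\otimes B}$-invariant, so by Proposition \ref{pro:lie-bia}(1) the triple $(A\otimes B,[-,-],\tilde\Delta_{\widehat r})$ is a quasi-triangular Lie bialgebra. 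Hence everything reduces to the single identity $\tilde\Delta=\tilde\Delta_{\widehat r}$; once this is established, parts (1) and (2) are immediate from Proposition \ref{pro:lie-bia}, using Corollary \ref{cor:sNYBE-sCYBE} in the skew-symmetric (triangular) case.

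To prove $\tilde\Delta=\tilde\Delta_{\widehat r}$ I would first reduce to the skew-symmetric part of $r$. Since $\mathfrak{s}(r)$ is invariant we have $\Phi(a)(\mathfrak{s}(r))=0$, so $\delta_r(a)=-\Phi(a)(r)=-\Phi(a)(\mathfrak{a}(r))=\delta_{\mathfrak{a}(r)}(a)$ for all $a$; as $\Delta_\omega$ is fixed, $\tilde\Delta$ depends on $r$ only through $\mathfrak{a}(r)$. On the other side, the symmetry of the Casimir $\sum_j e_j\otimes f_j=\sum_j f_j\otimes e_j$ (a consequence of the symmetry and nondegeneracy of $\omega$) gives $\tau(\widehat r)=\widehat{\tau(r)}$, whence $\mathfrak{s}(\widehat r)=\widehat{\mathfrak{s}(r)}$; by the $\ad_{A\otimes B}$-invariance of $\widehat r+\tau(\widehat r)$ the coboundary of $\widehat{\mathfrak{s}(r)}$ vanishes, so $\tilde\Delta_{\widehat r}=\tilde\Delta_{\widehat{\mathfrak{a}(r)}}$ as well. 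Writing $s=\mathfrak{a}(r)$, the problem becomes the purely linear identity $\tilde\Delta^{(s)}=\tilde\Delta_{\widehat s}$ for a skew-symmetric $s$, where $\tilde\Delta^{(s)}$ denotes the comultiplication built from $\delta_s$; this no longer involves the NYBE.

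The heart of the argument, and the \textbf{main obstacle}, is this skew-symmetric matching. I would compute $\Delta_\omega$ in closed form, namely $\Delta_\omega(b)=-\sum_q\big((b\circ e_q)+(e_q\circ b)\big)\otimes f_q$ (verified directly from the defining relation $\omega(\Delta_\omega(b_1),b_2\otimes b_3)=\omega(b_1,b_2\circ b_3)$ and consistent with Example \ref{ex:rigNov}), substitute it together with $\delta_s(a)=-\Phi(a)(s)$ into the formula of Proposition \ref{pro:liebia-novbia}, and expand $\tilde\Delta_{\widehat s}(a\otimes b)=(\id\otimes\ad_{A\otimes B}(a\otimes b)+\ad_{A\otimes B}(a\otimes b)\otimes\id)(\widehat s)$ in the same basis. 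Matching the two then comes down to repeatedly applying the dual-basis identities recorded in the proof of Proposition \ref{pro:NYBE-CYBE}, such as $\sum_j(b\circ e_j)\otimes f_j=\sum_j e_j\otimes(b\circ f_j)=\sum_j(b\circ f_j)\otimes e_j$ and $\sum_j(e_j\circ b)\otimes f_j=\sum_j(f_j\circ b)\otimes e_j$, in order to move the $\circ$-factors between the two tensor slots and to interchange $e_j\leftrightarrow f_j$ in the Casimir. This is a lengthy but mechanical term-by-term verification; the skew-symmetry of both $s$ and $\widehat s$ keeps the number of surviving terms manageable, which is what makes the reduction of the previous paragraph worthwhile.

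Finally, for (3) I would compute the factorizing map directly. Identifying $(A\otimes B)^\ast$ with $A^\ast\otimes B^\ast$ and letting $\omega^\sharp\colon B^\ast\to B$ be the isomorphism determined by $\langle\beta,b\rangle=\omega(\omega^\sharp(\beta),b)$, a short pairing computation gives $\widehat r^\sharp=r^\sharp\otimes\omega^\sharp$ and $\widehat r^\natural=r^\natural\otimes\omega^\sharp$, hence
\[
\widehat{\mathcal I}=\widehat r^\sharp-\widehat r^\natural=(r^\sharp-r^\natural)\otimes\omega^\sharp=\mathcal I\otimes\omega^\sharp .
\]
If $(A,\diamond,\delta)$ is factorizable then $\mathcal I$ is an isomorphism, and since $\omega^\sharp$ is an isomorphism so is $\widehat{\mathcal I}$; by Definition \ref{def:fact-liebia} the induced Lie bialgebra $(A\otimes B,[-,-],\tilde\Delta_{\widehat r})=(A\otimes B,[-,-],\tilde\Delta)$ is factorizable. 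The only genuinely new work beyond parts (1)--(2) is the identification of $\widehat{\mathcal I}$, which is routine.
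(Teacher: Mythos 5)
Your proposal is correct, and on the parts the paper actually writes out it takes the same route: (1) and (2) via Proposition \ref{pro:NYBE-CYBE}, Corollary \ref{cor:sNYBE-sCYBE} and Proposition \ref{pro:lie-bia}, and (3) via $\widehat{r}^{\sharp}=r^{\sharp}\otimes\kappa^{\sharp}$, $\widehat{r}^{\natural}=r^{\natural}\otimes\kappa^{\sharp}$, hence $\widehat{\mathcal{I}}=\mathcal{I}\otimes\kappa^{\sharp}$ bijective (your $\omega^{\sharp}$ is exactly the paper's $\kappa^{\sharp}$). The genuine difference is that you isolate and prove the identification $\tilde{\Delta}=\tilde{\Delta}_{\widehat{r}}$, which the paper's proof uses silently: the paper only asserts it as the commutative diagram following the theorem and checks it in Examples \ref{ex:ind-lietri} and \ref{ex:ind-liefect}, so strictly speaking its argument for (1) and (2) is elliptical precisely where yours does the work. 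Your two reduction steps are valid: $\delta_{r}=\delta_{\mathfrak{a}(r)}$ since $\Phi(a)(\mathfrak{s}(r))=0$; and $\tilde{\Delta}_{\widehat{r}}=\tilde{\Delta}_{\widehat{\mathfrak{a}(r)}}$ since the Casimir $\sum_{j}e_{j}\otimes f_{j}$ is symmetric, so $\mathfrak{s}(\widehat{r})=\widehat{\mathfrak{s}(r)}$, and the $\ad_{A\otimes B}$-invariance half of Proposition \ref{pro:NYBE-CYBE} uses only invariance of $\mathfrak{s}(r)$, not the NYBE. Your closed form $\Delta_{\omega}(b)=-\sum_{q}\big((b\circ e_{q})+(e_{q}\circ b)\big)\otimes f_{q}$ is right (it follows from the invariance of $\omega$ and reproduces Example \ref{ex:rigNov}), and the ``mechanical'' matching you flagged as the main obstacle does close up: setting $\kappa_{1}=\sum_{j}(b\circ e_{j})\otimes f_{j}$ and $\kappa_{2}=\sum_{j}(e_{j}\circ b)\otimes f_{j}$, the dual-basis identities of Proposition \ref{pro:NYBE-CYBE} give $\tau(\kappa_{1})=\kappa_{1}$, $\tau(\kappa_{2})=-\kappa_{1}-\kappa_{2}$, $\sum_{j}e_{j}\otimes(b\circ f_{j})=\kappa_{1}$, $\sum_{j}e_{j}\otimes(f_{j}\circ b)=-\kappa_{1}-\kappa_{2}$, and then, under the rearrangement $(A\otimes B)\otimes(A\otimes B)\cong(A\otimes A)\otimes(B\otimes B)$ and writing $s=\mathfrak{a}(r)=\sum_{i}x_{i}\otimes y_{i}$, both $\tilde{\Delta}(a\otimes b)$ and $\tilde{\Delta}_{\widehat{s}}(a\otimes b)$ collapse to the same expression
\begin{equation*}
\sum_{i}\big((a\diamond x_{i})\otimes y_{i}+x_{i}\otimes(a\diamond y_{i})
+x_{i}\otimes(y_{i}\diamond a)\big)\otimes\kappa_{1}
+\sum_{i}\big(x_{i}\otimes(y_{i}\diamond a)-(x_{i}\diamond a)\otimes y_{i}\big)\otimes\kappa_{2}.
\end{equation*}
So your proof is a completion of the paper's rather than a departure from it: what it buys is a full justification of quasi-triangularity and triangularity of the induced Lie bialgebra, at the cost of the extra page of dual-basis bookkeeping that the paper avoids by assertion.
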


\begin{proof}
First, $(1)$ and $(2)$ follow from Proposition \ref{pro:NYBE-CYBE} and
Corollary \ref{cor:sNYBE-sCYBE}. Second, if $(A, \diamond, \delta)$ is factorizable,
i.e., there exists an element $r\in A\otimes A$ such that $\delta=\delta_{r}$,
$\mathfrak{s}(r)$ is invariant, and the map $\mathcal{I}=r^{\sharp}-r^{\natural}:
A^{\ast}\rightarrow A$ is an isomorphism of vector spaces, where $\delta_{r}$ is
given by Eq. \eqref{cobnov}. Suppose $\{e_{1}, e_{2},\cdots, e_{n}\}$ is a basis
of $B$ and $\{f_{1}, f_{2},\cdots, f_{n}\}$ is the dual basis of $\{e_{1}, e_{2},
\cdots, e_{n}\}$ with respect to $\omega(-,-)$. We need show that $\widehat{\mathcal{I}}
=\widehat{r}^{\sharp}-\widehat{r}^{\natural}: (A\otimes B)^{\ast}\rightarrow A\otimes B$ is
an isomorphism of vector spaces. Denote $\kappa:=\sum_{j}e_{j}\otimes f_{j}\in B\otimes B$.
Define $\kappa^{\sharp}: B^{\ast}\rightarrow B$ by $\langle\kappa^{\sharp}
(\eta_{1}),\; \eta_{2}\rangle=\langle\eta_{1}\otimes\eta_{2},\; \kappa\rangle$,
for any $\eta_{1}, \eta_{2}\in B$. Then, one can check that $\kappa^{\sharp}$ is
a linear isomorphism and $\langle\kappa^{\sharp}(\eta_{1}),\; \eta_{2}\rangle
=\langle\eta_{1}\otimes\eta_{2},\; \kappa\rangle
=\langle\eta_{2}\otimes\eta_{1},\; \kappa\rangle
=\langle\kappa^{\sharp}(\eta_{2}),\; \eta_{1}\rangle$.
Therefore, for any $\xi_{1}, \xi_{2}\in A$ and $\eta_{1}, \eta_{2}\in B$,
\begin{align*}
\langle\widehat{r}^{\natural}(\xi_{1}\otimes\eta_{1}),\; \xi_{2}\otimes\eta_{2}\rangle
&=-\sum_{i,j}\langle(\xi_{2}\otimes\eta_{2})\otimes(\xi_{1}\otimes\eta_{1}),\;
(x_{i}\otimes e_{j})\otimes(y_{i}\otimes f_{j})\rangle\\[-2mm]
&=\Big(-\sum_{i}\langle\xi_{2}, x_{i}\rangle\langle\xi_{1}, y_{i}\rangle\Big)
\Big(\sum_{j}\langle\eta_{1}, f_{j}\rangle\langle\eta_{2}, e_{j}\rangle\Big)\\[-2mm]
&=\langle r^{\natural}(\xi_{1}),\; \xi_{2}\rangle
\langle\kappa^{\sharp}(\eta_{2}),\; \eta_{1}\rangle\\
&=\langle r^{\natural}(\xi_{1})\otimes\kappa^{\sharp}(\eta_{1}),\;
\xi_{2}\otimes\eta_{2}\rangle.
\end{align*}
That is, $\widehat{r}^{\natural}=r^{\natural}\otimes\kappa^{\sharp}$, Similarly,
we have $\widehat{r}^{\sharp}=r^{\sharp}\otimes\kappa^{\sharp}$.
Thus, $\widehat{\mathcal{I}}=\widehat{r}^{\sharp}-\widehat{r}^{\natural}
=(r^{\sharp}-r^{\natural})\otimes\kappa^{\sharp}$ is an isomorphism of vector spaces.
The proof is finished.
\end{proof}

Let $(A, \diamond)$ be a Novikov algebra and $r\in A\otimes A$.
We the following diagram is commutative:
$$\xymatrix@C=3cm@R=0.6cm{
\txt{$r$ \\ {\tiny a solution of the NYBE}\\ {\tiny in
$(A, \diamond)$ of type ($\star$)}} \ar[d]_{{\rm Pro.}~\ref{pro:NYBE-CYBE}}
^{{\rm Cor.}~\ref{cor:sNYBE-sCYBE}} \ar[r]^{{\rm Pro.}~\ref{pro:tri-bialg},
\quad {\rm Pro.}~\ref{pro:inv-bia}} &
\txt{$(A, \diamond, \delta_{r})$ \\ {\tiny a Novikov bialgebra}}
\ar[d]^{{\rm Pro.}~\ref{pro:colie-conov}} \\
\txt{$\widehat{r}$ \\ {\tiny a solution of the CYBE}\\ {\tiny in $(A\otimes B, [-,-])$ of
type ($\star$)}} \ar[r]^{{\rm Pro.}~\ref{pro:lie-bia}\qquad\quad} &
\txt{$(A\otimes B, [-,-], \tilde{\Delta})=(A\otimes B, [-,-], \tilde{\Delta}_{\widehat{r}})$ \\
{\tiny the induced Lie bialgebra}}}
$$
where type ($\star$) means $(1)$ $\mathfrak{s}(r)$ is invariant, or $(2)$ $r$ is
skew-symmetric, or $(3)$ $\mathfrak{s}(r)$ is invariant and $\mathcal{I}=r^{\sharp}
-r^{\natural}$ is an isomorphism of vector spaces.

\begin{ex}\label{ex:ind-lietri}
Consider the 2-dimensional Novikov algebra $(A=\Bbbk\{e_{1}, e_{2}\}, \diamond)$, where
$-e_{1}\diamond e_{2}=e_{2}=e_{2}\diamond e_{1}$, $e_{1}\diamond e_{1}=
e_{2}\diamond e_{2}=0$. One can check that all solutions of the NYBE in this Novikov algebra
have the form $r=k(e_{1}\otimes e_{2}-e_{2}\otimes e_{1})+le_{2}\otimes e_{2}$ for
$k, l\in\Bbbk$. In this case, $\mathfrak{s}(r)$ is not invariant. We consider
a skew-symmetric solution $r=e_{1}\otimes e_{2}-e_{2}\otimes e_{1}$, and get a triangular
Novikov bialgebra $(A, \diamond, \delta_{r})$, where $\delta_{r}: A\rightarrow A\otimes A$
is given by $\delta_{r}(e_{1})=-e_{2}\otimes e_{1}$ and $\delta_{r}(e_{2})
=-e_{2}\otimes e_{2}$.

Let $(B, \circ, \omega)$ be the quadratic right Novikov algebra given in Example
\ref{ex:rigNov}, i.e., $B=\Bbbk\{x_{1}, x_{2}\}$, $x_{1}\circ x_{1}=0$,
$x_{1}\circ x_{2}=-2x_{1}$, $x_{2}\circ x_{1}=x_{1}$, $x_{2}\circ x_{2}=x_{2}$, and
$\omega(x_{1}, x_{1})=\omega(x_{2}, x_{2})=0$, $\omega(x_{1}, x_{2})=\omega(x_{2}, x_{1})=1$.
Then we get a right Novikov coalgebra $(B, \Delta_{\omega})$, where $\Delta_{\omega}(x_{1})
=x_{1}\otimes x_{1}$, $\Delta_{\omega}(x_{2})=x_{1}\otimes x_{2}-2x_{2}\otimes x_{1}$.
Moreover, by Proposition \ref{pro:liebia-novbia}, we get a Lie bialgebra $(A\otimes B,
[-,-], \tilde{\Delta})$, where
\begin{align*}
&\qquad [e_{1}\otimes x_{1},\; e_{2}\otimes x_{2}]=e_{2}\otimes x_{1}
=[e_{1}\otimes x_{2},\; e_{2}\otimes x_{1}],\\
&\qquad\qquad\qquad [e_{1}\otimes x_{2},\; e_{2}\otimes x_{2}]=-2e_{2}\otimes x_{2},\\
&\tilde{\Delta}(e_{1}\otimes x_{1})=(e_{1}\otimes x_{1})\otimes(e_{2}\otimes x_{1})
-(e_{2}\otimes x_{1})\otimes(e_{1}\otimes x_{1}),\\
&\tilde{\Delta}(e_{1}\otimes x_{2})=(e_{1}\otimes x_{2})\otimes(e_{2}\otimes x_{1})
-(e_{2}\otimes x_{1})\otimes(e_{1}\otimes x_{2})\\[-1mm]
&\qquad\qquad\qquad+2(e_{2}\otimes x_{2})\otimes(e_{1}\otimes x_{1})
-2(e_{1}\otimes x_{1})\otimes(e_{2}\otimes x_{2}),\\
&\tilde{\Delta}(e_{2}\otimes x_{2})=3(e_{2}\otimes x_{2})\otimes(e_{2}\otimes x_{1})
-3(e_{2}\otimes x_{1})\otimes(e_{2}\otimes x_{2}).
\end{align*}
On the other hand, in the quadratic right Novikov algebra $(B, \circ, \omega)$,
$\{y_{1}=x_{2},\; y_{2}=x_{1}\}$ is the dual basis of $\{x_{1}, x_{2}\}$ with
respect to $\omega(-,-)$. Let
$$
\widehat{r}=(e_{1}\otimes x_{1})\otimes(e_{2}\otimes x_{2})
-(e_{2}\otimes x_{1})\otimes(e_{1}\otimes x_{2})
+(e_{1}\otimes x_{2})\otimes(e_{2}\otimes x_{1})
-(e_{2}\otimes x_{2})\otimes(e_{1}\otimes x_{1}).
$$
Then, $\widehat{r}$ is a skew-symmetric solution of the CYBE in $(A\otimes B, [-,-])$,
and it induces a comultiplication $\tilde{\Delta}_{\widehat{r}}$ on $A\otimes B$ such that
$(A\otimes B, [-,-], \tilde{\Delta}_{\widehat{r}})$ is a triangular Lie bialgebra.
By direct calculation, we get $\tilde{\Delta}_{\widehat{r}}=\tilde{\Delta}$.
Thus, the diagram above is commutative.
\end{ex}

\begin{ex}\label{ex:ind-liefect}
Consider the 4-dimensional Novikov algebra $(A=\Bbbk\{e_{1}, e_{2}, e_{3}, e_{4}\},
\diamond)$, where $e_{1}\diamond e_{1}=e_{2}$, $e_{1}\diamond e_{4}=e_{2}-2e_{3}$,
$e_{4}\diamond e_{1}=-2e_{2}+e_{3}$, $e_{4}\diamond e_{4}=e_{3}$. Then, one can check
that $r=e_{1}\otimes e_{3}+e_{2}\otimes e_{4}$ is a solution of the NYBE in this
Novikov algebra, and $r+\tau(r)$ is invariant. Thus, by Example \ref{ex:fact-novbia},
we get a factorizable Novikov bialgebra $(A, \diamond, \delta_{r})$, where the nonzero
comultiplication $\delta_{r}: A\rightarrow A\otimes A$ is given by
$\delta_{r}(e_{1})=e_{2}\otimes e_{2}$, $\delta_{r}(e_{4})=-e_{3}\otimes e_{3}$.

Let $(B, \circ, \omega)$ be the quadratic right Novikov algebra given in Example
\ref{ex:rigNov}. By Proposition \ref{pro:liebia-novbia}, we get a Lie bialgebra
$(A\otimes B, [-,-], \tilde{\Delta})$, where
\begin{align*}
&\qquad [e_{1}\otimes x_{1},\; e_{1}\otimes x_{2}]=-3e_{2}\otimes x_{1}
=[e_{1}\otimes x_{2},\; e_{4}\otimes x_{1}],\\
&\qquad [e_{1}\otimes x_{1},\; e_{4}\otimes x_{2}]=3e_{3}\otimes x_{1}
=[e_{4}\otimes x_{2},\; e_{4}\otimes x_{1}],\\
&\qquad\qquad [e_{1}\otimes x_{2},\; e_{4}\otimes x_{2}]
=3e_{2}\otimes x_{2}-3e_{3}\otimes x_{2},\\
&\tilde{\Delta}(e_{1}\otimes x_{2})=3(e_{2}\otimes x_{1})\otimes(e_{2}\otimes x_{2})
-3(e_{2}\otimes x_{2})\otimes(e_{2}\otimes x_{1}),\\
&\tilde{\Delta}(e_{4}\otimes x_{2})=3(e_{3}\otimes x_{2})\otimes(e_{3}\otimes x_{1})
-3(e_{3}\otimes x_{1})\otimes(e_{3}\otimes x_{2}).
\end{align*}
On the other hand, in the quadratic right Novikov algebra $(B, \circ, \omega)$,
$\{y_{1}=x_{2},\; y_{2}=x_{1}\}$ is the dual basis of $\{x_{1}, x_{2}\}$ with
respect to $\omega(-,-)$. Let
$$
\widehat{r}=(e_{1}\otimes x_{1})\otimes(e_{3}\otimes x_{2})
+(e_{1}\otimes x_{2})\otimes(e_{3}\otimes x_{1})
+(e_{2}\otimes x_{1})\otimes(e_{4}\otimes x_{2})
+(e_{2}\otimes x_{2})\otimes(e_{4}\otimes x_{1}).
$$
Then, $\widehat{r}$ is a symmetric solution of the CYBE in $(A\otimes B, [-,-])$,
and one can check that it is $\ad_{A\otimes B}$-invariant. Thus, the solution $\widehat{r}$
induces a comultiplication $\tilde{\Delta}_{\widehat{r}}$ on $A\otimes B$ such that
$(A\otimes B, [-,-], \tilde{\Delta}_{\widehat{r}})$ is a quasi-triangular Lie bialgebra.
By direct calculation, we get $\tilde{\Delta}_{\widehat{r}}=\tilde{\Delta}$, and
$\widehat{\mathcal{I}}=\widehat{r}^{\sharp}-\widehat{r}^{\natural}: (A\otimes B)^{\ast}
\rightarrow A\otimes B$ is given by
\begin{align*}
&\widehat{\mathcal{I}}(f_{1}\otimes y_{1})=e_{3}\otimes x_{2},\quad
\widehat{\mathcal{I}}(f_{1}\otimes y_{2})=e_{3}\otimes x_{1},\quad
\widehat{\mathcal{I}}(f_{2}\otimes y_{1})=e_{4}\otimes x_{2},\quad
\widehat{\mathcal{I}}(f_{2}\otimes y_{2})=e_{4}\otimes x_{1},\\
&\widehat{\mathcal{I}}(f_{3}\otimes y_{1})=e_{1}\otimes x_{2},\quad
\widehat{\mathcal{I}}(f_{3}\otimes y_{2})=e_{1}\otimes x_{1},\quad
\widehat{\mathcal{I}}(f_{4}\otimes y_{1})=e_{2}\otimes x_{2},\quad
\widehat{\mathcal{I}}(f_{4}\otimes y_{2})=e_{2}\otimes x_{1},
\end{align*}
where $\{f_{1}, f_{2}, f_{3}, f_{4}\}\subseteq A^{\ast}$ is the dual basis of
$\{e_{1}, e_{2}, e_{3}, e_{4}\}$ and $\{y_{1}, y_{2}\}\subseteq B^{\ast}$ is the dual
basis of $\{x_{1}, x_{2}\}$. Thus, $(A\otimes B, [-,-], \tilde{\Delta})=
(A\otimes B, [-,-], \tilde{\Delta}_{\widehat{r}})$ as factorizable Lie bialgebras.
\end{ex}

\bigskip
\noindent
{\bf Acknowledgements. } This work was financially supported by National
Natural Science Foundation of China (No. 11801141).

\smallskip
\noindent
{\bf Declaration of interests.} The authors have no conflicts of interest to disclose.

\smallskip
\noindent
{\bf Data availability.} Data sharing is not applicable to this article as no new data were
created or analyzed in this study.


\begin{thebibliography}{abc}
\bibitem{Agu} M. Aguiar,
    On the associative analog of Lie bialgebras,
    J. Algebra {\bf 244} (2001), 492--532.

\bibitem{Bai1} C. Bai,
    Left-symmetric bialgebras and an analogue of the classical Yang-Baxter equation,
    Commun. Contemp. Math. {\bf 10} (2008), 221--260.

\bibitem{Bai} C. Bai,
    Double constructions of Frobenius algebras, Connes cocycles and their duality,
    J. Noncommut. Geom. \textbf{4} (2010), 475--530.

\bibitem{BLST} C. Bai, G. Liu, Y. Sheng, R. Tang,
    Quasi-triangular, factorizable Leibniz bialgebras and relative Rota-Baxter operators,
    Forum Mathematicum (accepted) 2024.

\bibitem{BLP} C. Bai, H. Li, Y. Pei,
    $\phi_{\epsilon}$-coordinated modules for vertex algebras,
    J. Algebra {\bf 246} (2015), 211--242.

\bibitem{BK} B. Bakalov, V. Kac,
    Field algebras,
    Int. Math. Res. Not. {\bf 2003} (2003), 123--159.

\bibitem{BN} A. Balinsky, S. Novikov,
    Poisson brackets of hydrodynamic type, Frobenius algebras and Lie algebras,
    Sov. Math. Dokl. {\bf 32} (1985), 228--231.

\bibitem{BCHM} K. Benali, T. Chtioui, A. Hajjaji, S. Mabrouk,
    Bialgebras, the Yang-Baxter equation and Manin triples for mock-Lie algebras,
    Acta et Commentationes Universitatis Tartuensis de Mathematica {\bf 27} (2023), 211--233.

\bibitem{BCZ} L.A. Bokut, Y. Chen, Z. Zhang,
    Gr\"{o}bner-Shirshov bases method for Gelfand-Dorfman-Novikov algebras,
    J. Algebra Appl. {\bf 16} (2017), 1750001, 22pp.


\bibitem{BD} Y. Bruned and V. Dotsenko,
    Novikov algebras and multi-indices in regularity structures,
    arXiv: 2311.09091.

\bibitem{BHZ} Y. Bruned, M. Hairer, L. Zambotti,
    Algebraic renormalization of regularity structures,
    Invent. Math. {\bf 215} (2019), 1039--1156.

\bibitem{CH} Z. Cui, B. Hou,
    Factorizable mock-Lie bialgebras and factorizable Rota-Baxter mock-Lie bialgebras,
    submitted.
	
\bibitem{Dri} V.G. Drinfeld,
    Hamiltonian structures of Lie groups, Lie bialgebras and the geometric meaning of
    the classical Yang-Baxter equations,
    Soviet Math. Dokl. {\bf 27} (1983), 68--71.
	
\bibitem{GD1}I. Gelfand, I. Dorfman,
    Hamiltonian operators and algebraic structures related to them,
    Funct. Anal. Appl. {\bf 13} (1979), 248--262.
	
\bibitem{GD2} I. Gelfand, I. Dorfman,
    Hamiltonian operators and infinite dimensional Lie algebras,
    Funct. Anal. Appl. {\bf 15} (1981), 173--187.

%\bibitem{Hong} Y. Hong, Extending structures and classifying complements for
%     left-symmetric algebras,  Results Math. {\bf 74} (2019), Paper No. 32, 24pp.	
	
\bibitem{HBG} Y. Hong, C. Bai, L. Guo,
    Infinite-dimensional Lie bialgebras via affinization of Novikov bialgebras and
    Koszul duality,
    Comm. Math. Phys. {\bf 401} (2023), 2011--2049.
	
\bibitem{HBG1} Y. Hong, C. Bai, L. Guo,
    Deformation families of Novikov bialgebras via differential antisymmetric
    infinitesimal bialgebras, arXiv: 2402.16155.

\bibitem{HBG2} Y. Hong, C. Bai, L. Guo,
    A bialgebrta theory of Gel'fand-Dorfman algebras with applications to Lie conformal bialgebras, arXiv: 2401.13608.

\bibitem{K1}  V. Kac,
    Vertex algebras for beginners, 2nd Edition,
    Amer. Math. Soc., Providence, RI, 1998.

\bibitem{Ko} J. Koszul,
    Domaines born\'{e}s homog\'{e}nes et orbites de transformations affines,
    Bull. Soc. Math. France, {\bf 89} (1961), 515--533.

\bibitem{LS} H. Lang, Y. Sheng,
    Factorizable Lie bialgebras, quadratic Rota-Baxter Lie algebras and Rota-Baxter Lie
    bialgebras, Commun. Math. Phys. {\bf 397} (2023), 763--791.

\bibitem{LLB} Y. Lin, X. Liu, C. Bai,
    Differential antisymmetric infinitesimal bialgebras, coherent derivations and
    Poisson bialgebras,
    SIGMA Symmetry Integrability Geom. Methods Appl. {\bf 19} (2023), 018, 47pp.

\bibitem{LMW} M. Livernet, B. Mesablishvili, R. Wisbauer,
    Generalised bialgebras and entwined monads and comonads,
    J. Pure Appl. Algebra {\bf 219} (2015), 3263--3278.

\bibitem{RS} N. Reshetikhin, M.A. Semenov-Tian-Shansky,
    Quantum R-matrices and factorization problems,
    J. Geom. Phys. {\bf 5} (1988), 533--550.

\bibitem{SW} Y. Sheng, Y. Wang,
    Quasi-triangular and factorizable antisymmetric infinitesimal bialgebras,
    J. Algebra {\bf 628} (2023), 415--433.

\bibitem{Xu} X. Xu,
    Quadratic conformal superalgebras,
    J. Algebra, {\bf 231} (2000), 1--38.
	
\bibitem{Vin} E. Vinberg,
    Convex homogeneous cones,
    Trans. Mosc. Math. Soc. {\bf 12} (1963), 340--403.

\bibitem{WBLS} Y. Wang, C. Bai, J. Liu, Y, sheng,
    Quasi-triangular pre-Lie bialgebras, factorizable pre-Lie bialgebras and Rota-Baxter
    pre-Lie algebras,
    J. Geom. Phys. {\bf 199} (2024), 105146, 22pp.

\bibitem{Zhe} V.N. Zhelyabin,
    Jordan bialgebras and their relation to Lie bialgebras,
    Algebra Logic, {\bf 36} (1997), 1--15.


\end{thebibliography}
\end{document}